\documentclass[11pt]{amsart}
\textwidth=14.5cm
\oddsidemargin=1cm
\evensidemargin=1cm
\def\a{\alpha}
\usepackage[all]{xy}
\usepackage{amsmath}
\usepackage[latin1]{inputenc}
\usepackage{amsfonts}
\usepackage{amssymb}
\usepackage{multicol}
\usepackage{verbatim}
\usepackage{amsthm}
\usepackage{amscd}
\usepackage{pb-diagram}
\usepackage[mathscr]{eucal}
\usepackage[latin1]{inputenc}

\usepackage{amsmath}

\usepackage{amsfonts}

\usepackage{amssymb}

\usepackage{amsthm}

\usepackage{graphics}

\newcommand{\ZZ}{\mathbb{Z}}

\newcommand{\CC}{\mathbb{C}}

\newcommand{\QQ}{\mathbb{Q}}

\newcommand{\Glie}{\mathfrak{g}}

\newcommand{\Hlie}{\mathfrak{h}}

\newcommand{\U}{\mathcal{U}}

\newtheorem{thm}{Theorem}[section]

\newtheorem{defi}[thm]{Definition}

\newtheorem{cor}[thm]{Corollary}

\newtheorem{prop}[thm]{Proposition}

\newtheorem{rem}[thm]{Remark}

\newtheorem{ex}[thm]{Example}


\newcommand{\C}{\mathbb{C}}
\newcommand{\Q}{\mathbb{Q}}
\newcommand{\Z}{\mathbb{Z}}

\newcommand{\g}{\mathfrak{g}}
\newcommand{\bo}{\mathfrak{b}}
\newcommand{\tb}{\mathbf{\mathfrak{t}}}
\newcommand{\ga}{\overline{\alpha}}

\newcommand{\Psib}{\mbox{\boldmath$\Psi$}}
\newcommand{\Psibs}{\scalebox{.7}{\boldmath$\Psi$}}
\newcommand{\qbin}[2]{{\left[
\begin{matrix}{\,\displaystyle #1\,}\\
{\,\displaystyle #2\,}\end{matrix}
\right]
}}

\newcommand{\nc}{\newcommand}
\nc{\on}{\operatorname}
\nc{\la}{\lambda}
\nc{\wh}{\widehat}
\nc{\wt}{\widetilde}
\nc{\sw}{{\mathfrak s}{\mathfrak l}}
\nc{\ghat}{\wh{\g}}
\nc{\hhat}{\wh{\h}}
\nc{\mc}{\mathcal}
\nc{\bi}{\bibitem}
\nc{\pa}{\partial}
\nc{\ppart}{(\!(t)\!)}
\nc{\pparl}{(\!(\la)\!)}
\nc{\zpart}{(\!(z^{-1})\!)}
\nc{\n}{{\mathfrak n}}
\nc{\ol}{\overline}
\nc{\mb}{\mathbf}
\nc{\bb}{{\mathfrak b}}
\nc{\su}{\wh\sw_2}
\nc{\h}{{\mathfrak h}}
\nc{\can}{\on{can}}
\nc{\ntil}{\wt{\n}}
\nc{\pone}{{\mathbb P}^1}
\nc{\bs}{\backslash}
\nc{\al}{\alpha}
\nc{\gt}{{\mathfrak g}'}
\nc{\ds}{\displaystyle}

\usepackage[all]{xy}
\setcounter{tocdepth}{1}

\title{
Stable maps, Q-operators and category $\mathcal{O}$}
\author{David Hernandez}

\address{Universit\'e de Paris and Sorbonne Universit\'e, CNRS, IMJ-PRG, IUF, F-75006 Paris, France.}

\email{david.hernandez@u-paris.fr}

\begin{document} 

\begin{abstract} Motivated by Maulik-Okounkov stable maps associated to quiver varieties, 
we define and construct algebraic stable maps on tensor products of representations in the category $\mathcal{O}$ of 
the Borel subalgebra of an arbitrary untwisted quantum affine algebra. Our representation-theoretical construction is 
based on the study of the action of Cartan-Drinfeld subalgebras. We prove the algebraic stable maps are invertible and 
depend rationally on the spectral parameter. As an application, we obtain new $R$-matrices in the category $\mathcal{O}$ 
and we establish that a large family of 
simple modules, including the prefundamental representations associated to $Q$-operators, generically commute as representations of 
the Cartan-Drinfeld subalgebra. We also establish categorified $QQ^*$-systems in terms of the $R$-matrices we construct.
\end{abstract}

\maketitle

\tableofcontents

\section{Introduction}

Let $q\in\CC^*$ which is not a root of unity and let $\U_q(\Glie)$ be an untwisted quantum affine algebra. The category $\mathcal{C}$ of finite-dimensional
representations of $\U_q(\Glie)$ has been studied from various geometric, algebraic and combinatorial points of view. One crucial property
of the category $\mathcal{C}$, which goes back to Drinfeld, is to admit generic braidings, that is there is an isomorphism 
$$V\otimes W \simeq W \otimes V$$ 
for generic simple modules in $\mathcal{C}$. Such isomorphisms are called $R$-matrices and satisfy the Yang-Baxter equation. Moreover the tensor product $V\otimes W$ is generically simple. These results follows from the existence of the universal $R$-matrix of $\U_q(\Glie)$. 

Maulik and Okounkov \cite{mo} proposed a striking new point of view on these structures by introducing the notion of stable envelopes and stable maps.
These authors have presented a very general construction such maps 
$$\text{Stab}_{\mathfrak{C}} : K_T(X^A)\rightarrow K_T(X)$$
based\footnote{For the moment, only the
cohomological version of the work of Maulik-Okounkov is public yet. For $K$-theoretic stable map there are several important differences with cohomological versions, in particular they depend on a new parameter, the slope, see \cite{os}.} on remarkable Lagrangian correspondences in $X\times X^A$ defined from the action of a pair of tori $A\subset T$ on a symplectic variety $X$ (the action of
$A$ is supposed to preserve the symplectic form). Here $K_T$ denotes the equivariant $K$-theory with respect to $T$ and $X^A$ the fixed point locus for the $A$-action. The Lagrangian sub-varieties, the stable envelopes, are built by successive approximations from the closure of a natural preimage of a diagonal subvariety. This holds in great generality including symplectic resolutions. 

The construction of stable maps depends on some additional data, in particular on a cone $\mathfrak{C}\subset \text{Lie}(A)$, the chamber, which is a connected component in the Lie algebra of $A$ 
of the complementary of an hyperplan arrangement. The choice of $\mathfrak{C}$ leads to the definition of attracting directions in the normal direction to $X^A$ and determines the support of 
the stable envelope. The stable map $\text{Stab}_\mathfrak{C}$ satisfies a certain triangularity property with respect to $\prec_{\mathfrak{C}}$ . 
This "topological" triangularity is a crucial property of stable maps.


For a choice of two chambers $\mathfrak{C}$ and $\mathfrak{C}'$, the construction gives two maps :
$$\xymatrix{ & K_T(X) & \\ K_T(X^A)\ar[ur]_{\text{Stab}_{\mathfrak{C}}}\ar@{-->}[rr]_{\mathcal{R}_{\mathfrak{C}',\mathfrak{C}}}& & K_T(X^A)\ar[ul]^{\text{Stab}_{\mathfrak{C}'}}}.$$
Up to localization, the map $\text{Stab}_{\mathfrak{C'}}$ is invertible and we get a geometric $R$-matrix
$$R_{\mathfrak{C}',\mathfrak{C}} = (\text{Stab}_{\mathfrak{C}'})^{-1}\circ \text{Stab}_{\mathfrak{C}} \in \text{End}(K_T(X^A))$$
which might be seen as a wall-crossing from the chamber $\mathfrak{C}$ to $\mathfrak{C}'$. It gives rise in particular to $R$-matrices which are already known, but the techniques which are used go much further.

The theory of stable envelopes plays an important role in geometric representation theory as well as in enumerative 
geometry and has various incarnations in various areas of mathematics. 
Nakajima varieties are particularly important examples.
Indeed in a series of seminal papers Nakajima has constructed, in the equivariant $K$-theory of these varieties, certain representations of quantum affine algebras
 $\mathcal{U}_q(\mathfrak{g})$ for $\mathfrak{g}$ simply-laced (see \cite{Nsem, Nak0}). 
Moreover, the geometric study of the coproduct \cite{VV, Ntens} leads to the construction of tensor products of 
certain finite-dimensional representations. Stable envelopes give a geometric construction of $R$-matrices for tensor 
products of fundamental representations in the category $\mathcal{C}$ of finite-dimensional representations of $\U_q(\Glie)$ \cite{mo, os}.

This leads to the question of extending the construction of stable maps to non-simply laced
quantum affine algebras as well as to representations which are 
not necessarily finite-dimensional, for instance in the category $\mathcal{O}$. 
However no geometric model is known at the moment for these situations. More generally, we may 
ask for a purely representation-theoretical or algebraic characterization of stable maps. 

Let us recall that Jimbo and the first author introduced \cite{HJ} the category $\mathcal{O}$ of representations of a Borel
subalgebra $\U_q(\bo)$ of $\U_q(\Glie)$. Finite-dimensional
representations of $\U_q(\Glie)$ are objects in this category as well as the infinite-dimensional 
prefundamental representations of $\U_q(\bo)$ constructed\footnote{Such prefundamental representations were first constructed explicitly for 
${\mathfrak g} = \hat{sl}_2$ by Bazhanov-Lukyanov-Zamolodchikov, for $\hat{sl}_3$ by Bazhanov-Hibberd-Khoroshkin and 
for $\hat{sl}_n$ with $i = 1$ by Kojima.} in \cite{HJ}. 
They are obtained as asymptotic limits of Kirillov-Reshetikhin
 modules, which form a family of simple finite-dimensional representations of $\U_q(\mathfrak{g})$. 
These prefundamental representations, denoted by $L^+_{i,a}$ and $L^-_{i,a}$, are simple $\U_q(\bo)$-modules parametrized by a complex number $a\in\CC^*$ 
and $1\leq i\leq n$, where $n$ is the rank of the underlying finite-dimensional simple Lie algebra.
The category $\mathcal{O}$ and the prefundamental representations were used by Frenkel and the first author \cite{FH} to prove
a conjecture of Frenkel-Reshetikhin \cite{Fre} on the spectra of quantum integrable systems, generalizing the existence
of Baxter's polynomials to describe these spectra beyond the case of the $XXZ$-model. The prefundamental representations
play a crucial role for theses works as the corresponding transfer-matrices are the Baxter's $Q$-operators.

Our present paper has a second main motivation : the study of tensor products of $\ell$-weight vectors of representations of quantum affine algebras.
The $\ell$-weight vectors are pseudo eigenvectors for the action of the Cartan-Drinfeld subalgebra $\U_q(\Hlie)^+\subset \U_q(\Glie)$. The study of this action is strongly related to Frenkel-Reshetikhin $q$-character theory \cite{Fre}. Note that the action of the Cartan-Drinfeld subalgebra $\U_q(\Hlie)^+$ can naturally be deformed to the action of the Baxter algebra
(see \cite[Proposition 5.5]{FH} for instance). It is well known that elements of the Cartan-Drinfeld subalgebra $\U_q(\Hlie)^+$ do not behave well with respect to
the coproduct, that is why the study of tensor product of $\ell$-weight vectors is technically involved. A tensor product of $\ell$-weight vectors is not necessarily 
an $\ell$-weight vector, and this is a source of many technical developments. This can be observed for example in the tensor product of two $2$-dimensional representations of $\U_q(\hat{sl}_2)$, see Example \ref{exh4}.

 However, thanks to a remarkable properties of the coproduct on Cartan-Drinfeld elements (see \cite{da} and Theorem \ref{apco} below), certain $\ell$-weight vectors 
in the tensor product can be decomposed into sums of pure tensor of $\ell$-weight vectors \cite{h4} (see Theorem \ref{prodlweight} below) for which a 
triangularity condition appears. 
This algebraic triangularity might be seen as an analog of the topological triangularity discussed above for stable maps.

\medskip

In the present paper we propose to define algebraic stable map directly from $\ell$-weight vectors. This representation-theoretical point of view allows to give a definition for the non simply-laced types 
as well as for the category $\mathcal{O}$. It also give a practical way to handle the algebraic stable maps (we compute several examples).

The idea is the following : let $V$, $W$ in the category $\mathcal{O}$. For $v\in V$, $w\in W$ $\ell$-weight vectors, we prove that $v\otimes w$ can be canonically perturbed to produce an $\ell$-weight vector 
$$S_{V,W}(v\otimes w)\in V\otimes W.$$ 
The different terms added to $v\otimes w$ in order to obtain the new $\ell$-weight vector $S_{V,W}(v\otimes w)$ in $V\otimes W$ 
might be seen as algebraic analogs of the successive approximations in the construction of the stable envelopes mentioned above.
Moreover, a key point is that our construction respects a triangularity property for a certain partial ordering on the cartesian square of the 
integral weight lattice (Equation \ref{crosordering}), by analogy to the topological triangularity discussed above. 

We establish this defines a linear morphism
$$S_{V,W} : V\otimes W\rightarrow V\otimes W.$$
In certain cases for which the Maulik-Okounkov stable maps can be computed \cite{os}, it can be checked that they coincide with $S_{V,W}$ 
(up to a renormalization by a diagonal operator). This is expected to be true in general.

It is well known that a representation $V$ of $\U_q(\Glie)$ can be deformed by adding a spectral parameter $u$. We get a representation $V(u)$
and the algebraic stable maps deform accordingly
$$S_{V,W}(u) : V(u)\otimes W\rightarrow V(u)\otimes W.$$
We establish the algebraic stable maps are invertible and depend rationally on the spectral parameter $u$.

We have reminded above that the category $\mathcal{C}$ of finite-dimensional representations of $\U_q(\Glie)$ has generic braidings as the 
universal R-matrix can be specialized to give a meromorphic braiding, the $R$-matrix
$$\mathcal{R}_{V,W}(u) : V(u)\otimes W \rightarrow W\otimes V(u),$$
for $V$, $W$ simple finite-dimensional modules. For a generic complex number $u$ (which does not belong to a finite set),  we get an isomorphism.

But for the category $\mathcal{O}$, not only the universal $R$-matrix can not be specialized on a general tensor product of simple representations (as only one Borel subalgebra act on these representations in general), but also there are simple representations $V$, $W$ so that $V(a)\otimes W$ is non simple for any $a\in\CC^*$. Although its Grothendieck ring is commutative \cite{HJ}, the category $\mathcal{O}$ is not generically braided (see Example \ref{exnon} : in the $sl_2$-case, for any $a,b \in\CC^*$, $L_{1,a}^+\otimes L_{1,b}^-$ is not isomorphic to $L_{1,b}^-\otimes L_{1,a}^+$).
Hence, the $R$-matrices $\mathcal{R}_{V,W}(u)$ do not exist for arbitrary simple representations in the category $\mathcal{O}$. 

But the algebraic stable maps $S_{V,W}(u)$ do exist and the construction in the present paper produces maps of the form
$$\mathcal{R}_{V,W}^\alpha(u) = S_{W,V}(u) (\tau \circ \alpha(u)) (S_{V,W}(u))^{-1} : V(u)\otimes W \rightarrow W\otimes V(u)$$
where $\tau$ is the twist (and $\alpha(u)$ is a certain renormalization operator).


As an application of the results and constructions in this paper, we establish that generic tensor products of a large family of simple representations $V$, $W$ 
in the category $\mathcal{O}$ commute as representations of the Cartan-Drinfeld subalgebra $\U_q(\Hlie)^+$ : 
$$\mathcal{R}_{V,W}^1(u) : V(u)\otimes W \simeq_{\U_q(\Hlie)^+}  W\otimes V(u).$$
As far the author knows, this is a new representation-theoretical result, even in the case of prefundamental
representations (this is not a direct consequence of the commutativity of Grothendieck ring).

We also obtain that the Cartan-Drinfeld factor of the universal $R$-matrix acts rationally on a tensor product of finite-dimensional representation, up to a scalar 
multiple (this is a well-known result for the whole universal $R$-matrix).


The category $\mathcal{O}$ has a remarkable  monoidal subcategory $\mathcal{O}^-$ generated by finite-dimensional representations and negative 
prefundamental representations constructed in \cite{HL} (a dual category $\mathcal{O}^+$ is also constructed in \cite{HL}; see also \cite{FJM}). It is known \cite{FH} that prefundamental representations in the category $\mathcal{O}^-$ commute : 
$$L_{i,a}^-(u)\otimes L_{j,b}^-  \simeq L_{j,b}^- \otimes L_{i,a}^-(u)$$ 
as this tensor product is simple. 

As a consequence of the result of this paper we prove the category $\mathcal{O}^-$ admits generic braidings : for $V$, $W$ simple modules in $\mathcal{O}^-$, 
there is $\alpha$ such that $\mathcal{R}_{V,W}^\alpha(u)$ is an isomorphism of representations. By specialization, it gives non-zero morphisms 
$$\mathcal{R}_{V,W} : V\otimes W \rightarrow W\otimes V$$
which are not invertible in general. This leads to categorification 
of remarkable relations which hold in the Grothendieck ring of the category, such as the $QQ^*$-systems 
(which appear as cluster mutations and are closely related to Bethe Ansatz equations).

\medskip

Note that our results give partial informations on possible varieties for a geometric realization of prefundamental representations
\footnote{In type $A$, relations between $Q$-operators and quantum $K$-theory is discussed in \cite{psz} in the context of the theory of stable envelopes.}. We hope
it will give some additional practical tools to handle the corresponding geometric structures. Other possible further developments of the results of our paper are discussed in the last section, in particular on the polynomiality of Cartan-Drinfeld elements, 
generalized Schur-Weyl dualities in the sense of Kang-Kashiwara-Kim and natural bases of standard modules.

Note also that the category $\mathcal{O}$ studied in the present paper has been recently related \cite{hshi} to representations of
shifted quantum affine algebras in the sense of Finkelberg-Tsymbaliuk \cite{FT} and so to quantized $K$-theoretic Coulomb branches.
Hence the method developed in the present paper may also be developed in these new contexts.

\medskip

In this paper we establish various properties of the algebraic stable maps we consider. These properties are at the origin of the present work and discussions with A. Okounkov were crucial for its development (see in particular Remark \ref{remao}).

\medskip

This paper is organized as follows. In Section \ref{back} we give reminders on quantum affine algebras, their finite-dimensional
representations and the category $\mathcal{O}$ for its Borel subalgebra. In Section \ref{sm} we explain the definition and the
construction of algebraic stable maps on tensor products of modules in the category $\mathcal{O}$ (Definition \ref{defis}). 
We prove they define linear isomorphisms (Proposition \ref{isom}) and we establish the rationality in the spectral parameter (Theorem \ref{ratios}). We give explicit examples for finite and infinite dimensional representations (subsection \ref{exsldeux}). In Section \ref{rel}, we establish the compatibility of algebraic stable map with the Drinfeld coproduct for the action of Cartan-Drinfeld subalgebras (Proposition \ref{isom}). In the case of finite-dimensional representation, the algebraic stable maps are related to factors of the universal $R$-matrix (Proposition \ref{relkt}) and 
in certain remarkable cases to Maulik-Okounkov stable maps. In Section \ref{rm} the applications to the construction of $R$-matrices in the category $\mathcal{O}$ (Theorem \ref{isopre}) and categorifications of remarkable relations (Theorem \ref{catqq}) are established. In Section \ref{fd} we discuss various possible further developments.

\medskip

{\bf Acknowledgment : } The author is very grateful to Andrei Okounkov for discussions from which the idea of this paper emerged.
The author is supported by the European Research Council under the European Union's Framework Programme H2020 with ERC Grant Agreement number 647353 Qaffine.

\section{Background on quantum affine algebras}\label{back}

In this section we collect some definitions and results on quantum
affine algebras and their representations. We refer the reader to
\cite{cp} for a canonical introduction. We also discuss representations of the
Borel subalgebra of a quantum affine algebra, see \cite{HJ, FH} for
more details. In particular we remind the corresponding category $\mathcal{O}$ and the
category of finite-dimensional representations. They have been studied from many points geometric, 
algebraic, combinatorial point of views in connections to various fields, see \cite{mo, kkko, gtl, k, o} for recent developments and \cite{Hbou} for a recent review.

All vector spaces, algebras and tensor products are defined over $\CC$, except when otherwise specified.

\subsection{Quantum affine algebras and Borel algebras}\label{debut}

Let $C=(C_{i,j})_{0\leq i,j\leq n}$ be an indecomposable Cartan matrix
of untwisted affine type.  We denote by $\Glie$ the Kac--Moody
Lie algebra associated with $C$.  Set $I=\{1,\ldots, n\}$, and denote
by $\overline{\Glie}$ the finite-dimensional simple Lie algebra associated with
the Cartan matrix $(C_{i,j})_{i,j\in I}$.  Let 
$$\{\alpha_i\}_{i\in I}\text{ , }\{\alpha_i^\vee\}_{i\in I}\text{ , }\{\omega_i\}_{i\in I}\text{ , }\{\omega_i^\vee\}_{i\in I},$$ 
and $\overline{\mathfrak{h}}$ be the simple
roots, the simple coroots, the fundamental weights, the fundamental
coweights, and the Cartan subalgebra of $\overline{\Glie}$, respectively.  We will use
$$Q=\oplus_{i\in I}\Z\alpha_i\text{ , }Q^+=\oplus_{i\in I}\Z_{\ge0}\alpha_i\text{ , }P=\oplus_{i\in I}\Z\omega_i.$$  
Let $D=\mathrm{diag}(d_0\ldots, d_n)$
be the unique diagonal matrix such that $B=DC$ is symmetric and the $d_i$'s are relatively prime 
positive integers.  We will also use
$P_\Q = P\otimes \Q$ with its partial ordering defined by 
$$\omega\leq \omega'\text{ if and only if }\omega'-\omega\in Q^+.$$  
We use the numbering of the Dynkin
diagram as in \cite{kac}.  Let $a_0,\cdots,a_n$ stand for the labels as in \cite[pp.55-56]{kac}. 
We have $a_0 = 1$ and we set 
$$\alpha_0 =
-(a_1\alpha_1 + a_2\alpha_2 + \cdots + a_n\alpha_n).$$  
We fix a non-zero complex number $q$ which is not a root of unity and we set $q_i=q^{d_i}$. 
We also set $h\in\CC$ such that $q = e^h$, so that $q^r$ is well-defined for any $r\in\QQ$. 

We
will use the standard symbols for $z$ an indeterminate or a non-zero complex number which is not a root of unity :
\begin{align*}
[m]_z=\frac{z^m-z^{-m}}{z-z^{-1}}, \quad
[m]_z!=\prod_{j=1}^m[j]_z,
 \quad 
\qbin{s}{r}_z
=\frac{[s]_z!}{[r]_z![s-r]_z!}. 
\end{align*}

The quantum loop algebra $\U_q(\Glie)$ is the $\C$-algebra defined
by generators
$e_i,\ f_i,\ k_i^{\pm1}$ ($0\le i\le n$) 
and the following relations for $0\le i,j\le n$.
\begin{align*}
&k_ik_j=k_jk_i,\quad k_0^{a_0}k_1^{a_1}\cdots k_n^{a_n}=1,\quad
&k_ie_jk_i^{-1}=q_i^{C_{i,j}}e_j,\quad k_if_jk_i^{-1}=q_i^{-C_{i,j}}f_j,
\\
&[e_i,f_j]
=\delta_{i,j}\frac{k_i-k_i^{-1}}{q_i-q_i^{-1}},
\\
&\sum_{r=0}^{1-C_{i.j}}(-1)^re_i^{(1-C_{i,j}-r)}e_j e_i^{(r)}=0\quad (i\neq j),
&\sum_{r=0}^{1-C_{i.j}}(-1)^rf_i^{(1-C_{i,j}-r)}f_j f_i^{(r)}=0\quad (i\neq j)\,.
\end{align*}
Here we use the standard notations $x_i^{(r)}=x_i^r/[r]_{q_i}!$ ($x_i=e_i,f_i$). 
The algebra $\U_q(\Glie)$ has a Hopf algebra structure satisfying for $0\leq i\leq n$,
\begin{align*}
&\Delta(e_i)=e_i\otimes 1+k_i\otimes e_i,\quad
\Delta(f_i)=f_i\otimes k_i^{-1}+1\otimes f_i,
\quad
\Delta(k_i)=k_i\otimes k_i\,.
\end{align*}
The algebra $\U_q(\Glie)$ can also be presented in terms of the Drinfeld
generators \cite{Dri2, bec}
\begin{align*}
  x_{i,r}^{\pm}\ (i\in I, r\in\Z), \quad \phi_{i,\pm m}^\pm\ (i\in I,
  m\geq 0), \quad k_i^{\pm 1}\ (i\in I).
\end{align*}
It will be useful to consider the subalgebra $\mathcal{U}_q^\pm(\mathfrak{g})$ generated by the $x_{i,r}^\pm$ ($i\in I$, $r\in\Z$).
We will also use the generating series $(i\in I)$: 
$$\phi_i^\pm(z) = \sum_{m\geq 0}\phi_{i,\pm m}^\pm z^{\pm m} =
k_i^{\pm 1}\text{exp}\left(\pm (q_i - q_i^{-1})\sum_{m > 0} h_{i,\pm
    m} z^{\pm m} \right),$$
and we set $\phi_{i,\pm m}^\pm = 0$ for $m < 0$, $i\in I$.

These elements $\phi_{i,\pm m}^\pm$ are called Cartan-Drinfeld generators. They generate a subalgebra $\U_q(\Hlie)$ of $\U_q(\Glie)$.
Let $\U_q(\Hlie)^\pm$ be the subalgebra of $\U_q(\Hlie)$ generated by the $k_i$, $k_i^{-1}$ and the $h_{i,\pm r}$ ($i\in I, r > 0$).

\begin{defi} The subalgebras $\U_q(\Hlie)$ and $\U_q(\Hlie)^\pm$ are called Cartan-Drinfeld subalgebras.
\end{defi}

These algebras are commutative and will play a crucial role in this paper.

The algebra $\U_q(\Glie)$ has a $\ZZ$-grading defined by
$\on{deg}(e_i) = \on{deg}(f_i) = \on{deg}(k_i^{\pm 1}) = 0$ for $i\in
I$ and $\on{deg}(e_0) = - \on{deg}(f_0) = 1$.  It satisfies
$\on{deg}(x_{i,m}^\pm) = \on{deg}(\phi_{i,m}^\pm) = m$ for $i\in I$,
$m\in\ZZ$. For $a\in\CC^\times$, there is a corresponding algebra automorphism
$$\tau_a : \U_q(\Glie)\rightarrow \U_q(\Glie)$$ 
so that an element $g$ of degree $m\in\ZZ$ satisfies $\tau_a(g) = a^m g$.
The twist of a representation $W$ by $\tau_a$ is denoted by $W(a)$. 

We have also an automorphism $\tau_u$ of the algebra
$$\U_{q,u}(\Glie) = \U_q(\Glie)\otimes\CC(u)$$ 
defined as $\tau_a$ with $a$ replaced by the formal variable $u$. A representation $W$ of $\U_q(\Glie)$ gives rise to a 
twisted representation $W(u)$ of $\U_{q,u}(\Glie)$ (see \cite{Hbou} for detailed references). It is a $\CC(u)$-vector space 
$W(u) = W\otimes \CC(u)$. In the following, when twisted representations are involved, we use 
vector spaces or tensor products over fields of rational fractions (it will not be specified as there is no risk of 
confusion).

\begin{defi} The Borel algebra $\U_q(\bo)$ is 
the subalgebra of $\U_q(\Glie)$ generated by $e_i$ 
and $k_i^{\pm 1}$ with $0\le i\le n$. 
\end{defi}
The Borel algebra is a Hopf subalgebra of $\U_q(\Glie)$ and contains 
the Drinfeld generators $x_{i,m}^+$, $x_{i,r}^-$, $k_i^{\pm 1}$, $\phi_{i,r}^+$ 
where $i\in I$, $m \geq 0$ and $r > 0$. When $\Glie = \sw_2$, these
elements generate $\U_q(\mathfrak{b})$.

The Borel algebra $\U_q(\bo)$ contains the Cartan-Drinfeld subalgebra $\U_q(\Hlie)^+$.

Similarly, we have the opposite Borel subalgebra $\U_q(\bo^-)$ generated by the $f_i, k_i^{\pm 1}$ with $0\le i\le n$. 

Denote $\tb \subset \U_q(\bo)$ the subalgebra generated by $\{k_i^{\pm 1}\}_{i\in I}$. 
Set $\tb^\times=\bigl(\C^\times\bigr)^I$, and endow it with a group
structure by pointwise multiplication.  Consider the group morphism
$$\overline{\phantom{u}}:P_\Q \longrightarrow \tb^\times\text{ by setting }\overline{\omega_i}(j)=q_i^{\delta_{i,j}}.$$ 
We use the standard
partial ordering on $\tb^\times$:
\begin{align}
\omega\leq \omega' \quad \text{if $\omega \omega'^{-1}$ 
is a product of $\{\ga_i^{-1}\}_{i\in I}$}.
\label{partial}
\end{align}

\subsection{Category $\mathcal{O}$ for representations of Borel
  algebras}\label{catO}

For a $\U_q(\mathfrak{b})$-module $V$ and $\omega\in \tb^\times$, we set
\begin{align}
V_{\omega}=\{v\in V \mid  k_i\, v = \omega(i) v\ (\forall i\in I)\}\,,
\label{wtsp}
\end{align}
and call it the weight space of weight $\omega$. 

We say that $V$ is Cartan-diagonalizable 
if $V=\underset{\omega\in \tb^\times}{\bigoplus}V_{\omega}$.

For any $i\in I$, $r\in\ZZ$ we have 
$$\phi_{i,r}^\pm (V_\omega)\subset V_\omega \text{ and }x_{i,r}^\pm (V_{\omega}) \subset V_{\omega \ga_i^{\pm 1}}.$$

\begin{defi} A series $\Psib=(\Psi_{i, m})_{i\in I, m\geq 0}$ 
of complex numbers such that 
$\Psi_{i,0}\neq 0$ for all $i\in I$ 
is called an $\ell$-weight. 
\end{defi}

For such an $\ell$-weight, identifying $(\Psi_{i, m})_{m\geq 0}$ with its generating series, we shall write
\begin{align*}
\Psib = (\Psi_i(z))_{i\in I},
\quad
\Psi_i(z) = \underset{m\geq 0}{\sum} \Psi_{i,m} z^m.
\end{align*}
We obtain a group structure on the set of $\ell$-weights that we denote by $\tb^\times_\ell$. 

We have a surjective morphism of groups
$\varpi : \tb^\times_\ell\rightarrow \tb^\times$ given by 
$\varpi(\Psib) = (\Psi_i(0))_{i\in I}$.
In particular, we have a factorization of each $\ell$-weight
\begin{equation}\label{fact}\Psib = \varpi(\Psib) \widetilde{\Psib}\end{equation}
as a product of its constant part $\varpi(\Psib)$ by its normalized part $\widetilde{\Psib}$, so that the normalized part has a trivial constant part.

\begin{defi} A $\U_q(\mathfrak{b})$-module $V$ is said to be 
of highest $\ell$-weight 
$\Psib\in \tb^\times_\ell$ if there is $v\in V$ such that 
$V =\U_q(\mathfrak{b}).v$ 
and the following hold:
\begin{align*}
e_i\, v=0\quad (i\in I)\,,
\qquad 
\phi_{i,m}^+v=\Psi_{i, m}v\quad (i\in I,\ m\ge 0)\,.
\end{align*}
\end{defi}

The $\ell$-weight $\Psib\in \tb^\times_\ell$ is uniquely determined by $V$ and is called the highest $\ell$-weight of $V$. 
The vector $v$ is said to be a highest $\ell$-weight vector of $V$.

\begin{prop}\label{simple} 
For any $\Psib\in \tb^\times_\ell$, there exists a simple 
highest $\ell$-weight module $L(\Psib)$ of highest $\ell$-weight
$\Psib$. This module is unique up to isomorphism.
\end{prop}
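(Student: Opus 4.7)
The plan is to treat uniqueness and existence separately, with the bulk of the work going into showing that a suitable ``Verma-type'' module is nonzero.

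\medskip

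For \emph{uniqueness}, let $V$ and $V'$ be two simple $\U_q(\bo)$-modules of highest $\ell$-weight $\Psib$, with highest $\ell$-weight vectors $v\in V$ and $v'\in V'$. I would form the submodule $M\subset V\oplus V'$ generated by $(v,v')$. Since the defining relations only involve the action of the $e_i$ and of the $\phi_{i,m}^+$, the diagonal vector $(v,v')$ is itself a highest $\ell$-weight vector of $\ell$-weight $\Psib$, so $M$ is a highest $\ell$-weight module. The two projections $p\colon M\to V$ and $p'\colon M\to V'$ are surjective because $v$ generates $V$ and $v'$ generates $V'$. Any proper $\U_q(\bo)$-submodule of $M$ must miss $(v,v')$, since a submodule containing it would coincide with $M$; hence the sum of all proper submodules is proper, yielding a unique maximal proper submodule $N\subset M$. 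The simple quotient $M/N$ then surjects onto both $V$ and $V'$, and by simplicity of the targets these surjections are isomorphisms, so $V\simeq V'$.

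\medskip

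For \emph{existence}, I would construct a universal highest $\ell$-weight module. Let $J\subset\U_q(\bo)$ be the left ideal generated by $\{e_i\mid i\in I\}\cup\{\phi_{i,m}^+-\Psi_{i,m}\mid i\in I,\ m\ge 0\}$, and set $M(\Psib)=\U_q(\bo)/J$. The class $\overline{1}$ of the unit tautologically satisfies the highest $\ell$-weight conditions for $\Psib$ and generates $M(\Psib)$. Once $M(\Psib)\neq 0$ is secured, the same ``sum-of-proper-submodules-is-proper'' argument as above produces a unique maximal proper submodule $N\subset M(\Psib)$, and $L(\Psib):=M(\Psib)/N$ is the required simple highest $\ell$-weight module.

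\medskip

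The main obstacle is the nontriviality of $M(\Psib)$, i.e.\ showing that $1\notin J$. The natural route is through a triangular decomposition of $\U_q(\bo)$ in Drinfeld generators. Since $\U_q(\bo)$ contains $x_{i,m}^+$ ($i\in I$, $m\ge 0$), $x_{i,r}^-$ ($i\in I$, $r>0$), $k_i^{\pm 1}$ and $\phi_{i,r}^+$ ($i\in I$, $r\ge 0$), one expects a PBW-type factorization $\U_q(\bo)=\U^-\cdot\U^0\cdot\U^+$, where $\U^-$ is generated by the $x_{i,r}^-$ with $r>0$ (a subalgebra that also captures $e_0$), $\U^0$ is the Cartan-Drinfeld part and $\U^+$ is generated by the $x_{i,m}^+$. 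Modulo $J$ the factor $\U^+$ collapses to its augmentation and $\U^0$ acts by scalars, so $M(\Psib)\simeq\U^-$ as a vector space, which is visibly nonzero. A more economical alternative, bypassing the PBW input, is to exhibit an explicit nonzero $\U_q(\bo)$-module realizing $\Psib$: using the factorization $\Psib=\varpi(\Psib)\widetilde{\Psib}$, twist by the one-dimensional module corresponding to $\varpi(\Psib)$ and realize the normalized part at the top of a suitable tensor product of prefundamental and twisted finite-dimensional modules as constructed in \cite{HJ}. Its highest $\ell$-weight vector generates a nonzero module on which $J$ acts by zero, hence produces a nonzero quotient of $M(\Psib)$, forcing $M(\Psib)\neq 0$. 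Either approach completes the proof.
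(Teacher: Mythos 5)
The paper does not prove this proposition; it is recalled from \cite{HJ}, where the argument is the standard Verma-module one, and your primary route reproduces exactly that. Two points in your write-up, however, need repair.

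In the uniqueness step, the inference ``any proper submodule of $M$ misses $(v,v')$, hence the sum of all proper submodules is proper'' is a non sequitur as written: a sum of submodules each avoiding a given vector can perfectly well contain that vector. What actually saves the argument is the same triangular structure you invoke later for existence. From $e_i(v,v')=0$ ($i\in I$) and the $\phi^+$-eigenvector condition one deduces $x_{i,m}^+(v,v')=0$ for all $m\geq 0$ (bracket repeatedly with $h_{i,1}$, which acts by a scalar on $(v,v')$), hence $M=\U_q(\bo)^-\cdot(v,v')$ where $\U_q(\bo)^-$ is generated by the $x_{i,r}^-$; since these strictly lower the weight, the weight space $M_{\varpi(\Psib)}$ is one-dimensional. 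As $M$ is a weight module, every submodule is weight-graded, so every proper submodule lies in $\bigoplus_{\mu\neq\varpi(\Psib)}M_\mu$, and therefore so does the sum of all of them. With this inserted the uniqueness argument closes (note also that the surjection runs $V\twoheadrightarrow M/N$ rather than the reverse, since $\ker p\subseteq N$; the conclusion is unaffected).

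For existence, your ``more economical alternative'' does not cover the stated generality: the proposition concerns arbitrary $\Psib\in\tb^\times_\ell$, i.e.\ arbitrary formal power series with nonvanishing constant terms, whereas tensor products of prefundamental, one-dimensional and finite-dimensional modules realize only rational $\ell$-weights (indeed, by Theorem \ref{class} the simple module $L(\Psib)$ lies in category $\mathcal{O}$ only when $\Psib\in\mathfrak{r}$, so no such concrete model exists for non-rational $\Psib$); there is also a circularity risk, since the prefundamental representations of \cite{HJ} are constructed after, and using, the abstract existence of the $L(\Psib)$. You must therefore rely on your first route: the triangular decomposition $\U_q(\bo)\simeq\U^-\otimes\U^0\otimes\U^+$ in Drinfeld generators, which is established in \cite{HJ} building on Beck's PBW theory. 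Granting it, $M(\Psib)\simeq\U^-\neq 0$ as you claim, and the maximal-proper-submodule argument (justified as above, the top weight space of $M(\Psib)$ being one-dimensional) produces $L(\Psib)$.
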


The submodule of $L(\Psib)\otimes L(\Psib')$ generated by a tensor
product of highest $\ell$-weight vectors is of highest
$\ell$-weight $\Psib\Psib'$. Hence $L(\Psib\Psib')$ is a
subquotient of $L(\Psib)\otimes L(\Psib')$.

\begin{defi}\cite{HJ}
For $i\in I$ and $a\in\CC^\times$, let 
\begin{align}
L_{i,a}^\pm = L(\Psib_{i,a}^{\pm 1})
\quad \text{where}\quad 
(\Psib_{i,a}^{\pm 1})_j(z) = \begin{cases}
(1 - za)^{\pm 1} & (j=i)\,,\\
1 & (j\neq i)\,.\\
\end{cases} 
\label{fund-rep}
\end{align}
\end{defi}
The representation $L_{i,a}^+$ (resp. $L_{i,a}^-$) is called a positive (resp. negative)
prefundamental representation in the category $\mathcal{O}$.

\begin{defi}\label{oned}\cite{HJ}
For $\omega\in \tb^\times$, let 
$$[\omega] = L(\Psib_\omega)
\quad \text{where}\quad 
(\Psib_\omega)_i(z) = \omega(i) \quad (i\in I).$$
\end{defi}
Note that such a representation $[\omega]$ is one-dimensional. For $\lambda\in P$, we will use
the notation $[\lambda]$ for the representation
$[\overline{\lambda}]$.
For $\lambda\in \tb^\times$, we set 
$$D(\lambda )=
\{\omega\in \tb^\times \mid \omega\leq\lambda\}.$$ 
The following category $\mathcal{O}$ is introduced in \cite{HJ}, mimicking the definition for classical Kac-Moody 
algebra, but using the weight space decomposition for the underlying finite-type Lie algebra.

\begin{defi} A $\U_q(\mathfrak{b})$-module $V$ 
is said to be in category $\mathcal{O}$ if:

i) $V$ is Cartan-diagonalizable,

ii) for all $\omega\in \tb^\times$ we have 
$\dim (V_{\omega})<\infty$,

iii) there exist a finite number of elements 
$\lambda_1,\cdots,\lambda_s\in \tb^\times$ 
such that the weights of $V$ are in 
$\underset{j=1,\cdots, s}{\bigcup}D(\lambda_j)$.
\end{defi}
The category $\mathcal{O}$ is a monoidal category. 

\newcommand{\mfr}{\mathfrak{r}}

Let $\Psib\in \mfr$ be the subgroup of $\tb^\times_\ell$
consisting of $\Psib = (\Psi_i(z))_{i\in I}$ such that $\Psi_i(z)$ is rational for any $i\in I$.

\begin{thm}\label{class}\cite{HJ} Let $\Psib\in\tb^\times_\ell$. The simple module $L(\Psib)$ is in the category 
$\mathcal{O}$ if and only if $\Psib\in \mfr$.
\end{thm}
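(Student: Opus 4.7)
The argument naturally splits according to direction, with very different techniques on each side.

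For sufficiency, assume $\Psib\in\mfr$, so every $\Psi_i(z)$ is rational. Factoring its numerator and denominator gives
\[
\Psi_i(z)=c_i\prod_k(1-a_{i,k}z)^{\varepsilon_{i,k}}\qquad(c_i,a_{i,k}\in\C^\times,\ \varepsilon_{i,k}\in\{\pm1\}),
\]
and gathering the constants $c_i$ into a single element $\omega\in\tb^\times$ yields a factorization
\[
\Psib=\Psib_\omega\cdot\prod_{i,k}\Psib_{i,a_{i,k}}^{\varepsilon_{i,k}}
\]
in the group $\tb^\times_\ell$. The observation recorded just before Definition \ref{oned}, that $L(\Psib\Psib')$ is a subquotient of $L(\Psib)\otimes L(\Psib')$, applied iteratively, realises $L(\Psib)$ as a subquotient of a finite tensor product of one-dimensional modules $[\omega]$ and prefundamental modules $L^{\pm}_{i,a}$, each of which belongs to $\mathcal O$ (by \cite{HJ} for the latter, tautologically for the former). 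Since $\mathcal O$ is monoidal and the three axioms defining $\mathcal O$ are inherited by subquotients, we obtain $L(\Psib)\in\mathcal O$.

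For necessity, fix $i\in I$ and a highest $\ell$-weight vector $v\in L(\Psib)$, of weight $\lambda$. The plan is to reduce to the $\su$-case by restricting to the subalgebra $\U_q(\bo)_i\subset\U_q(\bo)$ generated by $k_i^{\pm1}$, the $\phi_{i,m}^+$ for $m\ge0$, the $x_{i,m}^+$ for $m\ge0$, and the $x_{i,r}^-$ for $r>0$, which is a Borel subalgebra of a quotient of $\U_{q_i}(\su)$. The submodule $W_i=\U_q(\bo)_i\cdot v$ is a highest $\ell$-weight module with $\ell$-weight $\Psi_i(z)$, and its weight spaces, being subspaces of weight spaces of $L(\Psib)\in\mathcal O$, are finite-dimensional. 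In particular the space
\[
U_i=\mathrm{span}\bigl\{x_{i,r}^-v\mid r>0\bigr\}\subset V_{\lambda\ga_i^{-1}}
\]
is finite-dimensional, so the vectors $x_{i,r}^-v$ satisfy non-trivial linear relations. The Drinfeld commutation relation between $\phi_i^+(z)$ and $x_i^-(w)$, cleared of denominators and applied to $v$, yields a first-order recursion in $r$ for the vectors $\phi_i^+(z)(x_{i,r}^-v)\in U_i[[z]]$ whose inhomogeneous part is $\Psi_i(z)\,x_{i,r}^-v$. Combining such a recursion with a non-trivial linear relation in $U_i$ (and applying suitable $x_{i,s}^+$ to extract scalars from the identity, using $x_{i,0}^+v=0$ and the Drinfeld relation $[x_{i,s}^+,x_{i,r}^-]$) produces an identity of the form $P(z)\Psi_i(z)=Q(z)$ with $P,Q\in\C[z]$, $P\neq 0$, proving rationality of $\Psi_i(z)$.

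The main obstacle is the necessity direction: one must manipulate formal series identities acting on an infinite-dimensional module, extract the polynomial identity $P(z)\Psi_i(z)=Q(z)$ from the linear dependence in $U_i$, and crucially ensure that the denominator $P(z)$ is non-zero. The reduction to the $\su$-case (along the lines of the Chari--Pressley analysis of finite-dimensional $\U_q(\su)$-modules, adapted here to the Borel and to infinite-dimensional modules with finite-dimensional weight spaces) keeps the algebra tractable, and non-vanishing of $P$ ultimately follows from the requirement that $\Psi_{i,0}\neq 0$ in the definition of an $\ell$-weight together with control of the leading coefficients of the recursion.
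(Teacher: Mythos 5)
The paper does not prove this statement: it is recalled as background and attributed to \cite{HJ}, so there is no in-paper proof to compare against. Your argument is, in substance, the standard one from \cite{HJ} and is correct. The sufficiency direction (factor each rational $\Psi_i(z)$ with $\Psi_i(0)\neq 0$ into $\overline{\omega}\prod(1-a_{i,k}z)^{\pm1}$, realize $L(\Psib)$ as a subquotient of a tensor product of $[\omega]$ and prefundamentals, and use that $\mathcal{O}$ is monoidal and closed under subquotients) is exactly the intended reduction; the only non-trivial input is that $L_{i,a}^{\pm}\in\mathcal{O}$, which you correctly outsource to \cite{HJ}. For necessity, your route through the $\phi_i^+(z)$--$x_i^-(w)$ relation is more machinery than is needed and makes the non-vanishing of $P(z)$ look more delicate than it is. The direct version: since $\dim V_{\lambda\ga_i^{-1}}<\infty$, the vectors $x_{i,r}^-v$ ($r>0$) satisfy a non-trivial relation $\sum_{r=1}^N c_r\, x_{i,r}^-v=0$; applying $x_{i,m}^+$ for $m\geq 0$ and using $x_{i,m}^+v=0$ (the weight space $V_{\lambda\ga_i}$ vanishes) together with $[x_{i,m}^+,x_{i,r}^-]v=(q_i-q_i^{-1})^{-1}\Psi_{i,m+r}v$ (the $\phi^-$ term drops out as $m+r>0$) yields $\sum_r c_r\Psi_{i,m+r}=0$ for all $m\geq0$. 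The Taylor coefficients of $\Psi_i(z)$ thus satisfy a linear recurrence with constant coefficients, so $\Psi_i(z)$ is rational, and the denominator is non-zero simply because not all $c_r$ vanish — no appeal to $\Psi_{i,0}\neq0$ is needed at this step.
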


Let $\mathcal{E}$ 
be the additive group of maps $c : P_\Q \rightarrow \ZZ$ 
whose support $\{\omega\in P_\Q,c(\omega) \neq 0\}$ is contained in 
a finite union of sets of the form $D(\mu)$. 

For $V$ in the category $\mathcal{O}$ we define the character of $V$ to be
an element of $\mathcal{E}$
\begin{align}
\chi(V) = \sum_{\omega\in\tb^\times} 
\text{dim}(V_\omega) [\omega]\,,
\label{ch}
\end{align}
where for $\omega\in P_\Q$, we have set $[\omega] = \delta_{\omega,.}\in\mathcal{E}$. 

This is coherent with the notation in Definition \ref{oned}, as for a one-dimensional representation $[\omega]$ therein, we have
$\chi([\omega]) = [\omega]$ in $\mathcal{E}$.

As for the category $\mathcal{O}$ of a classical Kac--Moody algebra,
the multiplicity of a simple module in a module of our category
$\mathcal{O}$ is well-defined (see \cite[Section 9.6]{kac}) and we have
the corresponding Grothendieck ring $K_0(\mathcal{O})$ (see \cite[Section 3.2]{HL}). Its elements are the formal
sums
$$\chi = \sum_{\Psib\in \mfr} \lambda_{\Psib} [L(\Psib)]$$ 
where the $\lambda_{\Psib}\in\ZZ$ are set so that $\sum_{\Psib\in
  \mfr, \omega\in P_\Q} |\lambda_{\Psib}|
\text{dim}((L(\Psib))_\omega) [\omega]$
is in $\mathcal{E}$.



\subsection{Finite-dimensional representations}\label{fdrep}


For $i\in I$ and $a\in\CC^*$, consider 
$$Y_{i,a} = \overline{\omega_i} \Psib_{i,aq_i}^{-1}\Psib_{i,aq_i^{-1}}.$$
If $M$ is a product of such $\ell$-weight, then $L(M)$ is finite-dimensional.  Moreover, the action of
$\U_q(\mathfrak{b})$ can be uniquely extended to an action of the full
quantum affine algebra $\U_q(\Glie)$, and any simple object in the
category $\mathcal{C}$  of (type $1$) finite-dimensional
representations of $\U_q(\Glie)$ is of this form. By \cite{cp} and \cite[Remark
3.11]{FH}, for $L$ a finite-dimensional module in the
  category $\mathcal{O}$, there is $M$ as above and
  $\omega\in\tb^\times$ such that 
	$$L \simeq L(M)\otimes [\omega].$$ 


\begin{ex}
For $i\in I$, $a\in\CC^\times$ and $k\geq 0$, we have the
Kirillov--Reshetikhin (KR) module
\begin{align}
W_{k,a}^{(i)} = L(Y_{i,a}Y_{i,aq_i^2}\cdots Y_{i,aq_i^{2(k-1)}})\,.
\label{KRmod}
\end{align}
The representations $V_i(a) = L(Y_{i,a})$ are called
fundamental representations.\end{ex}

\subsection{$\ell$-weight spaces}\label{qchar}

For a $\U_q(\mathfrak{b})$-module $V$ and $\Psib\in\tb_\ell^\times$, 
the linear subspace
\begin{align}
V_{\Psibs} =
\{v\in V\mid
\exists p\geq 0, \forall i\in I, 
\forall m\geq 0,  
(\phi_{i,m}^+ - \Psi_{i,m})^pv = 0\}
\label{l-wtsp} 
\end{align}
is called the $\ell$-weight space of $V$ of $\ell$-weight $\Psib$.

The study of these $\ell$-weight spaces is one of the motivations for the $q$-character theory \cite{Fre}.

A representation in the category $\mathcal{O}$ is the direct sum of its $\ell$-weight spaces. Moreover we have the following. 
\begin{thm}\cite{HJ} For $V$ in category $\mathcal{O}$, $V_{\Psib}\neq
  0$ implies $\Psib\in\mfr$.
\end{thm}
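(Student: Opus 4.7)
The plan is to deduce the statement from Theorem \ref{class} in two steps: a Jordan--H\"older type reduction to the case of a simple highest $\ell$-weight module, and a triangularity argument propagating rationality from the highest $\ell$-weight to all $\ell$-weights of such a simple module.

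For the reduction, fix a nonzero $v \in V_{\Psibs}$ of weight $\omega = \varpi(\Psib)$. The weight space $V_\omega$ is finite-dimensional by the category $\mathcal{O}$ hypothesis, and the commuting action of $\U_q(\Hlie)^+$ on it decomposes $V_\omega$ into finitely many generalized $\ell$-weight subspaces; this decomposition is compatible with any short exact sequence of $\U_q(\bo)$-modules. Using that multiplicities of simple modules in $\mathcal{O}$ are well-defined and that only finitely many composition factors contribute to a given weight space (so that infinite composition series cause no problem), one obtains at least one simple subquotient $L(\Psib')$ of $V$ with $(L(\Psib'))_{\Psibs} \neq 0$. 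Since $L(\Psib')\in\mathcal{O}$, Theorem \ref{class} gives $\Psib'\in\mfr$. It therefore suffices to prove the statement when $V=L(\Psib')$ is simple with rational highest $\ell$-weight.

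For the propagation step, let $v_0$ denote a highest $\ell$-weight vector of $L(\Psib')$. Since the lowering Drinfeld generators $x_{i,r}^-$ (for $i\in I$, $r>0$) lie in $\U_q(\bo)$, and $e_0$ is likewise expressible in Drinfeld form, the module $L(\Psib')$ is linearly spanned by monomials $x_{i_1,r_1}^-\cdots x_{i_s,r_s}^- v_0$ in these generators. The Drinfeld commutation relation between $\phi_i^+(z)$ and $x_{j,r}^-$ takes the schematic form
$$\phi_i^+(z)\, x_{j,r}^- = F_{i,j,r}(z)\, x_{j,r}^-\, \phi_i^+(z) + (\text{terms with strictly fewer lowering generators}),$$
with $F_{i,j,r}(z)\in\CC(z)$. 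Iterating along the length filtration, the restriction of $\phi_i^+(z)$ to the finite-dimensional space $L(\Psib')_\omega$ is upper-triangular in a suitable basis, with diagonal entries equal to $\Psi_i'(z)$ times explicit products of rational functions. Its eigenvalues, which by definition are the $\ell$-weights at weight $\omega$, are therefore rational, giving $\Psib\in\mfr$.

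The hard part is establishing the triangular structure. The monomials $x_{i_1,r_1}^-\cdots x_{i_s,r_s}^- v_0$ are not themselves $\ell$-weight vectors, and the ``lower'' terms generated by repeated commutations must be kept under control by a carefully chosen filtration on $L(\Psib')_\omega$, ensuring all correction terms live strictly earlier in the ordering. Granting this, rationality of the $\ell$-weights follows from elementary linear algebra: a family of commuting upper-triangular matrices over $\CC(z)$ has eigenvalues equal to their diagonal entries, which are rational by construction.
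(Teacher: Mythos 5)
First, a point of reference: the paper gives no proof of this statement --- it is imported verbatim from \cite{HJ} --- so there is no in-paper argument to measure yours against. Your reduction step is sound and is the natural first move: the generalized eigenspace decomposition of the finite-dimensional weight space $V_{\varpi(\Psibs)}$ under the commuting operators $\phi_{i,m}^+$ is compatible with a local composition series at that weight, so some simple subquotient $L(\Psib')$ (which lies in $\mathcal{O}$, hence has $\Psib'\in\mfr$ by Theorem \ref{class}) satisfies $L(\Psib')_{\Psibs}\neq 0$.

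The gap is in the propagation step, and it is not a deferrable technicality: the triangular structure you ask the reader to grant does not exist in the form you describe. Inside $\U_q(\bo)$ the relevant relation is $[h_{i,m},x_{j,r}^-]=-\tfrac{1}{m}[mB_{ij}]_{q}\,x_{j,r+m}^-$, so commuting a Cartan--Drinfeld mode past $x_{j,r}^-$ produces a term with the \emph{same} number of lowering generators and a strictly larger Drinfeld mode $r+m$; it never produces ``terms with strictly fewer lowering generators''. Moreover each $x_{j,r}^-$ lowers the weight by exactly $\alpha_j$, so all monomials spanning a fixed weight space $L(\Psib')_{\omega}$ have the same length $s$ determined by $\varpi(\Psib')-\omega$: the length filtration is constant precisely where you need it. If one filters instead by total Drinfeld mode $r_1+\cdots+r_s$, the corrections move strictly up the filtration, but the monomial spanning set is infinite while $L(\Psib')_{\omega}$ is finite-dimensional, so the high-mode monomials must be re-expanded in a finite basis and can hit arbitrary lower filtration steps --- and this re-expansion is exactly where the non-highest $\ell$-weights come from. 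Taken at face value, your scheme would make each $\phi_i^+(z)$ upper-triangular with all diagonal entries equal to $q_i^{c}\Psi_i'(z)$ for integers $c$, forcing every $\ell$-weight of $L(\Psib')$ to have normalized part $\widetilde{\Psib'}$; this is already false for the two-dimensional fundamental representation of $\U_q(\hat{sl}_2)$, whose $\ell$-weights $Y_{1,a}$ and $Y_{1,aq^2}^{-1}$ have distinct normalized parts. Without genuine triangularity, the eigenvalues of a commuting family of matrices over $\CC(z)$ are only algebraic over $\CC(z)$, so rationality does not follow. What your step 2 actually requires is a description of the $\ell$-weights of $L(\Psib')$ such as (\ref{infol}) (each $\ell$-weight is $\Psib'$ times a product of $A_{i,a}^{-1}$, or its category-$\mathcal{O}$ analogue), and establishing that is the real content of the theorem in \cite{HJ}; the known arguments proceed by restriction to the rank-one subalgebras attached to each node and explicit analysis there, not by a PBW commutation count.
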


\begin{ex} (i) The fundamental representation $V_1(a)$ of $\U_q(\hat{sl_2})$ is $2$-dimensional and has $\ell$-weight spaces
attached respectively to $Y_{1,a}$ and $Y_{1,aq^2}^{-1}$.

(ii) It is proved in \cite{HJ, FH} that for $i\in I$ and $a\in \CC^*$, the $\ell$-weights of $L_{i,a}^+$ are all of the form $\Psib_{i,a}\overline{\omega}$ where $\omega \in Q$. In the $sl_2$-case, all $\ell$-weight spaces are of dimension $1$ and the $\ell$-weights are the $\Psib_{1,a} \overline{-2r\omega_1}$, $r\geq 0$.
\end{ex}

Let $\mathcal{E}_\ell$ be the additive group of maps
$c : \mfr\rightarrow \ZZ$  
such that 
$$\varpi(\{\Psib \in  \mfr \mid c(\Psib) \not = 0\})$$ 
is contained in a finite union of sets of the form $D(\mu)$, and such that 
for every $\omega\in P_\Q$, the set of $\Psib \in  \mfr$ satisfying $c(\Psib) \not = 0$ and 
$\varpi(\Psib) = \omega$ is finite. The map $\varpi$ is naturally extended to a surjective homomorphism 
$$\varpi : \mathcal{E}_\ell\rightarrow \mathcal{E}.$$
For $\Psib\in\mfr$, we define $[\Psib] = \delta_{\Psibs,.}\in\mathcal{E}_\ell$.

For $V$ in the category $\mathcal{O}$, we define \cite{Fre, HJ} the $q$-character of $V$ as
$$\chi_q(V) = 
\sum_{\Psibs\in\mfr}  
\mathrm{dim}(V_{\Psibs}) [\Psib]\in \mathcal{E}_\ell\,.$$


  




Following \cite{Fre}, we will use for $i\in I$, $a\in\mathbb{C}^*$ the $\ell$-weight $A_{i,a}$ which is set to be
$$Y_{i,aq_i^{-1}}Y_{i,aq_i}
\Bigl(\prod_{\{j\in I|C_{j,i} = -1\}}Y_{j,a}
\prod_{\{j\in I|C_{j,i} = -2\}}Y_{j,aq^{-1}}Y_{j,aq}
\prod_{\{j\in I|C_{j,i} =
-3\}}Y_{j,aq^{-2}}Y_{j,a}Y_{j,aq^2}\Bigr)^{-1}.$$

\begin{ex}
{\rm
In the case $\Glie = \widehat{\sw}_2$, we have:
\[
\chi_q(L_{1,a}^+) = [(1 - za)]\sum_{r\geq 0} [-2r\omega_1]\text{ , }\chi_q(L_{1,a}^-) = \left[\frac{1}{(1-za)}\right]\sum_{r\geq 0}
  A_{1,a}^{-1}A_{1,aq^{-2}}^{-1}\cdots A_{1,aq^{-2(r-1)}}^{-1}. 
\]
}
\end{ex}


Recall the factorization of $\ell$-weights (\ref{fact}). We will use the following.

\begin{prop}\label{caract} Let $L(\Psib)$ finite-dimensional and $\Psib'$ be an $\ell$-weight of $L(\Psib)$. 
Then its constant part $\varpi(\Psib')$ is uniquely determined by its normalized part $\widetilde{\Psib'}$.
\end{prop}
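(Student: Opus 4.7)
My plan is to apply the decomposition $L(\Psib)\simeq L(M)\otimes [\omega]$ recalled in Section~\ref{fdrep}, where $M$ is a product of $Y_{i,a}$'s and $\omega\in\tb^\times$. Since $[\omega]$ is a one-dimensional module carrying a constant $\ell$-weight, the $\ell$-weights of $L(\Psib)$ are exactly the products $N\cdot \Psib_\omega$, where $N$ runs over the $\ell$-weights of the $\U_q(\Glie)$-module $L(M)$. By the Frenkel--Reshetikhin $q$-character theory \cite{Fre} for finite-dimensional representations of $\U_q(\Glie)$, every such $N$ is a Laurent monomial in the $Y_{i,a}$'s, so any $\ell$-weight of $L(\Psib)$ has the form
$$\Psib' \;=\; \Psib_\omega\,\prod_{(i,a)\in I\times\CC^\times}Y_{i,a}^{c_{i,a}}$$
for integers $c_{i,a}$ of finite support. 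Since $\Psib_\omega$ does not depend on the choice of $\Psib'$ and contributes only a fixed shift to $\varpi$ (while having trivial normalized part), it suffices to show that the normalized part of the $Y$-monomial $N=\prod_{i,a}Y_{i,a}^{c_{i,a}}$ determines its exponents $(c_{i,a})$: once this is known, $\varpi(N)=\sum_{i,a}c_{i,a}\omega_i$ is determined, hence so is $\varpi(\Psib')$.

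For this injectivity, the $i$-th coordinate of $\widetilde{N}$ reads
$$\widetilde{N}_i(z) \;=\; \prod_{a\in\CC^\times} \left(\frac{1-zaq_i^{-1}}{1-zaq_i}\right)^{c_{i,a}}.$$
Suppose two sequences of exponents $(c_{i,a})$ and $(c'_{i,a})$ produce the same rational function in each coordinate, and set $e_{i,a}=c_{i,a}-c'_{i,a}$. Matching zeros and poles on $\mathbb{P}^1$ forces $e_{i,a}=e_{i,q_i^2 a}$ for every $a\in\CC^\times$ and every $i\in I$. Because $q$ is not a root of unity, $q_i^2$ has infinite order, so every orbit of $a\mapsto q_i^2 a$ in $\CC^\times$ is infinite; combined with the finite support of $(e_{i,a})_a$, this forces $e_{i,a}\equiv 0$. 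Hence $(c_{i,a})=(c'_{i,a})$, and the claim follows.

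The only real technical point is this last injectivity. Its sole essential input is that $q$ (and hence each $q_i$) is not a root of unity, which is a standing hypothesis of the paper. Everything else is a clean reduction: the appeal to $q$-character theory confines $\ell$-weights of $L(M)$ to Laurent monomials in the $Y_{i,a}$, and the constant twist $[\omega]$ only contributes a $\Psib'$-independent shift to $\varpi$ that drops out of the comparison.
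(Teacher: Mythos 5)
Your argument is correct and follows essentially the same route as the paper's: both reduce the statement to the multiplicative freeness of the normalized parts of a distinguished family of monomials with known image under $\varpi$ --- the paper writes the $\ell$-weights as $\Psib\,A_{i_1,a_1}^{-1}\cdots A_{i_N,a_N}^{-1}$ and invokes freeness of the $\widetilde{A_{i,a}}$, while you write them as $\Psib_\omega\prod_{i,a} Y_{i,a}^{c_{i,a}}$ and establish freeness of the $\widetilde{Y_{i,a}}$ by matching zeros and poles. The only substantive difference is that you actually verify the freeness (correctly, using that $q_i^2$ has infinite order so the shift orbits are infinite), where the paper simply asserts the corresponding fact for the $\widetilde{A_{i,a}}$.
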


Note that this statement is not satisfied in general, for example it is not satisfied by positive prefundamental representations.

\begin{proof}
The $\ell$-weights of $L(\Psib)$ are of the form \cite{Fre, Fre2} :
\begin{equation}\label{infol}\Psib' = \Psib A_{i_1,a_1}^{-1}\cdots A_{i_N,a_N}^{-1}\end{equation}
where the $i_1,\cdots, i_N\in I$ and $a_1,\cdots, a_N\in \CC^*$. In particular 
$$\widetilde{\Psib'} = \widetilde{\Psib}\widetilde{A_{i_1,a_1}}^{-1}\cdots \widetilde{A_{i_N,a_N}}^{-1},$$
$$\varpi({\Psib'}) = \varpi(\Psib)\overline{-\alpha_{i_1} -\cdots -\alpha_{i_N}}.$$
But the $\widetilde{A_{i,a}}$ are free in the multiplicative group of $\ell$-weights, so 
$\varpi({\Psib'})$ is uniquely determined from $\widetilde{\Psib'}$.
\end{proof}

The algebra $\U_q(\Glie)$ has a natural $Q$-grading defined by
$$\deg\left(x^\pm_{i,m}\right) = \pm\a_i\text{ , }
\deg\left(h_{i,r}\right)=\deg\left(k_i^\pm\right) = \deg\left(c^{\pm 1/2}\right) = 0.$$

Let $\tilde{\U}_q^+(\Glie)$ (resp. $\tilde{\U}_q^-(\Glie)$) be the subalgebra of $\U_q(\Glie)$
consisting of elements of positive 
(resp. negative) $Q$-degree. These subalgebras should not be confused with the subalgebras $\U_q^\pm(\Glie)$ previously defined in terms of Drinfeld generators. Let 
$$X^+ = \sum_{j\in I, m\in\ZZ}\CC x_{j,m}^+\subset \tilde{\U}_q^+(\Glie).$$

\begin{thm}\label{apco}\cite{da} Let $i\in I$, $r > 0$, $m\in\ZZ$. We have
\begin{equation}\label{h}\Delta\left(h_{i,r}\right) \in h_{i,r}\otimes 1 + 1 \otimes h_{i,r} +  \tilde{\U}_q^-(\Glie) \otimes \tilde{\U}_q^+(\Glie),\end{equation}
\begin{equation}\label{x}\Delta\left(x_{i,m}^+\right) \in x_{i,m}^+\otimes 1 + \U_q(\Glie)\otimes \left(\U_q(\Glie) X^+\right).\end{equation}
\end{thm}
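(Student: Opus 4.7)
The plan is a joint induction handling both displayed containments simultaneously, rooted in Beck's realization of the Drinfeld generators as iterated images of Chevalley generators under Lusztig braid operators. The guiding observation is that the Drinfeld--Jimbo coproduct $\Delta$ is explicit on $e_i,f_i,k_i^{\pm 1}$, so statements about $\Delta(x_{i,m}^+)$ and $\Delta(h_{i,r})$ can be obtained by repeated commutator and braid-operator manipulations starting from these primitive cases, provided the right-hand-side subspaces are shown to be stable under those manipulations.

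First I would handle the base cases. For $m=0$ the identity $\Delta(x_{i,0}^+)=\Delta(e_i)=e_i\otimes 1+k_i\otimes e_i$ immediately satisfies the second containment, since $e_i=x_{i,0}^+\in X^+$ places $k_i\otimes e_i$ in $\U_q(\Glie)\otimes(\U_q(\Glie)X^+)$. For $r=1$ the element $h_{i,1}$ can be extracted from the Drinfeld relation $[x_{i,1}^+,x_{i,0}^-]=\phi_{i,1}^+/(q_i-q_i^{-1})$ together with the exponential definition of $\phi_i^+(z)$; combining this with $\Delta(x_{i,0}^-)=\Delta(f_i)=f_i\otimes k_i^{-1}+1\otimes f_i$ and a short direct computation of $\Delta(x_{i,1}^+)$ obtained from the braid construction, a $Q$-degree bookkeeping extracts the primitive piece $h_{i,1}\otimes 1+1\otimes h_{i,1}$ and places the remainder in $\widetilde{\U}_q^-(\Glie)\otimes\widetilde{\U}_q^+(\Glie)$.

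For the inductive propagation I would use a recursion implicit in the Drinfeld presentation, such as
$$(q_i-q_i^{-1})[h_{i,1},x_{i,m}^+] = [2]_{q_i}\, x_{i,m+1}^+ + (\text{Serre-type correction}),$$
or equivalently the braid-operator identity $T_{\omega_i}^{-1}(x_{i,m}^+)=x_{i,m+1}^+$. Applying $\Delta$ and feeding in the inductive triangularity for both $\Delta(h_{i,1})$ and $\Delta(x_{i,m}^+)$, a short computation shows the commutator stays of the form \emph{primitive part plus element of $\U_q(\Glie)\otimes(\U_q(\Glie)X^+)$}; the analogous argument with $T_{\omega_i}$ treats $m<0$. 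Once the triangularity is in hand for all $\Delta(x_{i,r}^+)$, the statement for $\Delta(h_{i,r})$ with arbitrary $r$ is read off from $\phi_{i,r}^+=(q_i-q_i^{-1})[x_{i,r}^+,x_{i,0}^-]+(\text{lower order})$ via the logarithmic relation between $\phi_i^+(z)$ and the $h_{i,s}$, inductively in $r$, with the primitive piece $h_{i,r}\otimes 1+1\otimes h_{i,r}$ isolated by the $Q$-degree-zero contributions.

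The main obstacle is verifying stability of the right-hand-side subspaces under the algebraic operations in the inductive step. The $Q$-degree grading makes $\widetilde{\U}_q^-(\Glie)\otimes\widetilde{\U}_q^+(\Glie)$ stability essentially automatic, but showing that the second-tensor-factor condition ``ends in $\U_q(\Glie)X^+$'' survives commutation with $\Delta(h_{i,1})$ requires a careful structural argument: one must verify that $X^+$, after arbitrary left multiplication inside $\U_q(\Glie)X^+$, remains adjoint-stable under the relevant subalgebras acting through the coproduct. This is the technical heart of \cite{da}, handled there via a PBW basis in the Drinfeld presentation adapted to the Drinfeld--Jimbo coproduct, allowing the double induction sketched above to close.
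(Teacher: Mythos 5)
The first thing to note is that the paper does not prove Theorem \ref{apco} at all: it is quoted from Damiani \cite{da} and used as a black box, so there is no in-paper argument to compare yours against. Judged on its own, your sketch correctly identifies the standard strategy (induction from the Chevalley base cases through the Drinfeld relations and Beck's braid construction), but it leaves the two genuinely hard points unresolved rather than proving them. First, the braid operators $T_{\omega_i}$ are \emph{not} coalgebra morphisms: $\Delta\circ T_w$ differs from $(T_w\otimes T_w)\circ\Delta$ by conjugation by a partial quasi-$R$-matrix, so you cannot simply ``apply $\Delta$'' to $T_{\omega_i}^{-1}(x_{i,m}^+)=x_{i,m+1}^+$ and feed in inductive hypotheses; controlling that conjugation is precisely the content of Damiani's computation, not a routine stability check. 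The commutator recursion $[h_{i,1},x_{i,m}^+]=[2]_{q_i}x_{i,m+1}^+$ (with no $(q_i-q_i^{-1})$ prefactor and no Serre-type correction, incidentally) is the cleaner route, but it presupposes the containment for $\Delta(h_{i,1})$, which in turn requires an explicit computation of $\Delta(x_{i,1}^+)$ from its expression as an iterated $q$-bracket of Chevalley generators involving $e_0$; that computation is where the work lies and your sketch does not carry it out.

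Second, and more seriously, the containment (\ref{x}) as stated is too weak to yield (\ref{h}) by the route you propose. In $\Delta(\phi_{i,r}^+)=(q_i-q_i^{-1})[\Delta(x_{i,r}^+),\Delta(f_i)]$ the error term of (\ref{x}) contributes brackets of $a\otimes b\,x_{j,m}^+$ against $f_i\otimes k_i^{-1}+1\otimes f_i$; the second tensor factor then acquires $Q$-degree-zero pieces via $[x_{i,m}^+,f_i]\sim\phi_{i,m}^{\pm}$, and the first factor $a$ is only known to lie in $\U_q(\Glie)$, not in $\tilde{\U}_q^-(\Glie)$. So these terms do not visibly land in $\tilde{\U}_q^-(\Glie)\otimes\tilde{\U}_q^+(\Glie)$, and one cannot ``isolate the primitive piece by $Q$-degree bookkeeping'' without a strictly finer inductive statement recording the $Q$-degrees of \emph{both} tensor factors of $\Delta(x_{i,r}^+)$ --- which is what \cite{da} actually establishes. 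Your closing paragraph concedes that the adjoint-stability of $\U_q(\Glie)X^+$ is ``the technical heart'' handled in \cite{da}; that is honest, but it means the proposal is an outline of where a proof would come from rather than a proof.
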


By definition, the $q$-character and the decomposition in $\ell$-weight spaces of a representation in the category $\mathcal{O}$ is determined by the action of $\U_q(\Hlie)^+$ \cite{Fre}. Therefore one can define the $q$-character $\chi_q(W)$ of a $\U_q(\Hlie)^+$-submodule $W$ of an object in the category $\mathcal{O}$.
 
The following result describes a condition on the $\ell$-weight of a linear combination of pure tensor products of weight vectors. It was originally proved in 
\cite{h4} for finite-dimensional representations in the category $\mathcal{C}$, but the proof is the same for general representations in the category $\mathcal{O}$.

\begin{thm}\cite{h4}\label{prodlweight}
Let $V_1, V_2$ representations in the category $\mathcal{O}$ and consider an $\ell$-weight vector
$$w = \left(\sum_{\alpha} w_\alpha\otimes v_\alpha\right) + \left(\sum_\beta w_\beta'\otimes v_\beta'\right)\in V_1\otimes V_2$$ 
satisfying the following conditions.

(i) The $v_\alpha$ are $\ell$-weight vectors of weight $\omega_\alpha$ and the $v_\beta'$ are weight vectors of weight $\omega_{\beta}$.

(ii) For any $\beta$, there is an $\alpha$ satisfying $\omega_\beta > \omega_\alpha$.

(iii) For $\omega\in\{\omega_\alpha\}_\alpha$, we have $\sum_{\left\{\alpha|\omega_\alpha = \omega\right\}} w_{\alpha}\otimes v_{\alpha}\neq 0$. 

\noindent Then the $\ell$-weight of $w$ is the product of the $\ell$-weight of one of the $v_\alpha$ by an $\ell$-weight of $V_1$.
\end{thm}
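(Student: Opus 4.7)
The plan is to isolate the leading contribution of $w$ on the $V_2$ side by restricting to a minimal weight space. Let $\omega_0$ be a minimal element of $\{\omega_\alpha\}_\alpha$ for the partial ordering of $\tb^\times$; such an element exists because $V_2$ lies in category $\mathcal{O}$ and its weights form a locally finite set. Condition (ii) forces that no $\omega_\beta$ satisfies $\omega_\beta\leq\omega_0$: if it did, some $\omega_{\alpha'}<\omega_\beta\leq\omega_0$ would contradict the minimality of $\omega_0$. Projecting $w$ onto $V_1\otimes(V_2)_{\omega_0}$ therefore annihilates every $w_\beta'\otimes v_\beta'$ as well as every $w_\alpha\otimes v_\alpha$ with $\omega_\alpha\neq\omega_0$, and leaves
\[ w_{(0)}\;:=\;\sum_{\omega_\alpha=\omega_0} w_\alpha\otimes v_\alpha, \]
which is non-zero by condition (iii).

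Next I would use Theorem \ref{apco} to compute the action of $h_{i,r}$ (for $i\in I$ and $r>0$) on $w$ after this projection. The correction term in $\tilde{\U}_q^-(\Glie)\otimes\tilde{\U}_q^+(\Glie)$ appearing in $\Delta(h_{i,r})$ involves an element of strictly positive $Q$-degree on the right factor, which strictly raises its weight. Its image in $V_1\otimes(V_2)_{\omega_0}$ would therefore have to originate from a vector of weight strictly less than $\omega_0$ on the right, but no such weight appears among the $\omega_\alpha$ or the $\omega_\beta$. Consequently
\[ \pi_{\omega_0}\bigl(\Delta(h_{i,r})\,w\bigr)\;=\;\sum_{\omega_\alpha=\omega_0}\bigl((h_{i,r}w_\alpha)\otimes v_\alpha+w_\alpha\otimes (h_{i,r}v_\alpha)\bigr). \]
Since $w$ is an $\ell$-weight vector for some $\ell$-weight $\Psib$, each $h_{i,r}$ acts on $w$ as a generalized eigenvalue $\mu_{i,r}$ determined by $\Psib$, so the same projection equals $\mu_{i,r}\,w_{(0)}$ up to a nilpotent correction coming from the generalized eigenspace.

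To conclude, I would decompose $w_{(0)}$ according to the $\ell$-weights of the $v_\alpha$'s inside $(V_2)_{\omega_0}$, writing $w_{(0)}=\sum_k w_{(0)}^{(k)}$ with $w_{(0)}^{(k)}=\sum_{\alpha\in S_k} w_\alpha\otimes v_\alpha$ and the $v_\alpha$ for $\alpha\in S_k$ all lying in the generalized $\ell$-weight space of a common $\ell$-weight $\Psib_k$. As distinct generalized $\ell$-weight spaces of $V_2$ are in direct sum and $w_{(0)}\neq 0$, some $w_{(0)}^{(k)}$ is non-zero. Inside that block $1\otimes h_{i,r}$ acts with generalized eigenvalue $\lambda_{i,r}^{(k)}$ coming from $\Psib_k$, and contracting with a linear functional dual to a basis of $\mathrm{span}\{v_\alpha\}_{\alpha\in S_k}$ extracts a non-zero vector $u\in V_1$ annihilated by a power of $h_{i,r}-(\mu_{i,r}-\lambda_{i,r}^{(k)})$ for every $i$ and $r$. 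By commutativity of $\U_q(\Hlie)^+$ the joint generalized eigenspaces of $V_1$ under the $(h_{i,r})_{i,r}$ are precisely its $\ell$-weight spaces, so $u$ is an $\ell$-weight vector with $\ell$-weight $\Psib_1$ satisfying $\Psib=\Psib_1\Psib_k$ (the $k_i$-component being multiplicative via $\Delta(k_i)=k_i\otimes k_i$). This is the desired factorization since $\Psib_k$ is the $\ell$-weight of any $v_\alpha$ with $\alpha\in S_k$. The main technical obstacle is this last step: one has to track generalized eigenvectors jointly for all $h_{i,r}$ and certify that the vector extracted in $V_1$ is a common generalized eigenvector, not merely an eigenvector of a single $h_{i,r}$ at a time.
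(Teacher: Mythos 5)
Your proposal is correct and follows essentially the same route as the proof the paper relies on (the one in \cite{h4}, which the paper only cites): project onto a minimal weight component of the second tensor factor, use the triangularity of $\Delta(h_{i,r})$ from Theorem \ref{apco} to see that the projection intertwines the coproduct action with $h_{i,r}\otimes 1+1\otimes h_{i,r}$, and then factor the generalized eigenvalues. The only point needing more care is the final contraction: to extract from $w_{(0)}^{(k)}$ a genuine joint generalized eigenvector of $V_1$ you should contract against the top vector of a basis adapted to a common strict flag for the commuting nilpotents $h_{i,r}-\lambda_{i,r}^{(k)}$ on the finite-dimensional space $\mathrm{span}\{v_\alpha\}_{\alpha\in S_k}$ (an arbitrary dual functional does not work), but this is routine linear algebra and you correctly flagged it.
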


This result is one of the motivations for the constructions in this paper. It can be seen as an algebraic analog of the topological triangularity 
for $K$-theoretic stable map, as discussed in the introduction. It is also crucial for the results in \cite{hrec} about modules of highest $\ell$-weight.

\begin{ex}\label{exh4} Let $a, b\in\mathbb{C}^*$ and $V_1(a) \otimes V_1(b)$ a tensor product of fundamental representations of $\mathcal{U}_q(\hat{sl}_2)$. We denote by $v_a^\pm$ (resp. $v_b^\pm$) a natural basis of 
$\ell$-weight vectors of $V_1(a)$ (resp. $V_1(b)$). Then $v_a^-\otimes v_b^+$ and $(b-a)(v_a^+\otimes v_b^-) + a(q - q^{-1})(v_a^-\otimes v_b^+)$
are $\ell$-weight vectors of respective $\ell$-weights $Y_{1,aq^2}^{-1}Y_{1,b}$, $Y_{1,a}Y_{1,bq^2}^{-1}$, but not $v_a^+\otimes v_b^-$.
See \cite[Example 3.3]{h4} for details.
\end{ex}

\section{Algebraic stable maps}\label{sm}

In this section we define and construct algebraic stable maps on tensor products of modules in the category $\mathcal{O}$ (Definition \ref{defis}). We prove they define linear isomorphisms (Proposition \ref{isom}). We introduce the deformations of algebraic stable maps and we establish the rationality in the spectral parameter (Theorem \ref{ratios}). Then we give various examples in Subsection \ref{exsldeux}.

The main motivations for the constructions in this section are the stable maps and the triangularity of 
$\ell$-weight vectors in Theorem \ref{prodlweight} (see the Introduction).

\subsection{Definition and construction}

Motivated by Theorem \ref{prodlweight}, let us consider the following partial ordering on $P_{\mathbb{Q}}\times P_{\mathbb{Q}}$ which will be crucial in the following : 
\begin{equation}\label{crosordering}(\omega_1,\omega_2)\preceq (\omega_1',\omega_2')\text{ if and only if }(\omega_1+\omega_2 = \omega_1' + \omega_2'\text{ and }\omega_1 \leq \omega_1').\end{equation}
Obviously, this is equivalent to $(\omega_1+\omega_2 = \omega_1' + \omega_2'\text{ and }\omega_2\succeq \omega_2')$. 

Let $V$ and $W$ representations in the category $\mathcal{O}$.

For $v\in V$, $w\in W$ $\ell$-weight vectors of respective $\ell$-weights $\Psib$, $\Psib'$ and corresponding weights $\omega_1$, $\omega_2$, let us denote
$$(v\otimes w)_{\prec} = \sum_{(\varpi(\Psib),\varpi(\Psib'))\succ (\omega_1,\omega_2)} V_{\omega_1}\otimes W_{\omega_2},$$
$$(v\otimes w)_{\preceq} = v\otimes w + (v\otimes w)_{\prec}.$$

\begin{prop}\label{proplw} There is an $\ell$-weight vector of $V\otimes W$ in 
$(v \otimes w)_{\preceq}$. The $\ell$-weight of such an $\ell$-weight vector is $\Psib \Psib'$.
\end{prop}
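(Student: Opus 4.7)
The plan is to work inside the finite-dimensional weight space $Y = (V \otimes W)_{\omega_1 + \omega_2}$ (finite-dimensional since $V, W$ lie in category $\mathcal{O}$) and to exploit a triangularity of the $\U_q(\Hlie)^+$-action coming from Theorem \ref{apco}. Decompose $Y = \bigoplus_{\gamma} V_\gamma \otimes W_{\omega_1 + \omega_2 - \gamma}$, the index running over the $V$-weights $\gamma$ that appear. The subspace $(v \otimes w)_\prec$ corresponds, in this decomposition, to the sum of summands indexed by pairs strictly dominated by $(\omega_1, \omega_2)$ in the order (\ref{crosordering}).

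The first step is to show that $(v \otimes w)_\prec$ is $\U_q(\Hlie)^+$-stable. Using $\phi_i^+(z) = k_i \exp\bigl((q_i - q_i^{-1}) \sum_{m > 0} h_{i,m} z^m\bigr)$, $\Delta(k_i) = k_i \otimes k_i$, and (\ref{h}), one expands the exponential to get a decomposition $\Delta(\phi_i^+(z)) = \phi_i^+(z) \otimes \phi_i^+(z) + R(z)$, where $R(z)$ lies in the bilateral ideal generated by $\tilde\U_q^-(\Glie) \otimes \tilde\U_q^+(\Glie)$ and hence, acting on $V_\gamma \otimes W_{\omega_1+\omega_2-\gamma}$, strictly shifts the first-factor weight in the direction prescribed by (\ref{crosordering}). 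Consequently $\Delta(\phi_{i,m}^+)$ preserves each summand modulo strictly-lower summands of this filtration, and $(v \otimes w)_\prec$ is $\U_q(\Hlie)^+$-stable.

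Second, I analyze the induced action on the top graded piece $V_{\omega_1} \otimes W_{\omega_2}$ of the quotient $Y/(v \otimes w)_\prec$: the $R(z)$-contribution vanishes in the quotient, so $\Delta(\phi_i^+(z))$ acts there as $\phi_i^+(z) \otimes \phi_i^+(z)$. Since $v, w$ are $\ell$-weight vectors of $\ell$-weights $\Psib, \Psib'$, it follows that the class $\overline{v \otimes w}$ is a generalized eigenvector for this induced action with $\ell$-weight $\Psib\Psib'$.

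Finally, I use that a commutative algebra acting on a finite-dimensional space admits a primary decomposition compatible with any invariant filtration: hence $Y_{\Psib\Psib'}$ surjects onto the $\Psib\Psib'$-generalized eigenspace of $Y/(v \otimes w)_\prec$, and any preimage $T$ of $\overline{v \otimes w}$ lies in $Y_{\Psib\Psib'}$ and satisfies $T - v \otimes w \in (v \otimes w)_\prec$, giving the desired $\ell$-weight vector in $(v \otimes w)_\preceq$ of $\ell$-weight $\Psib\Psib'$. The main obstacle is the triangularity in the first step: (\ref{h}) directly controls only the $h_{i,r}$'s, so one must bootstrap to $\phi_{i,m}^+$ by expanding the exponential and checking that all off-diagonal contributions, obtained as products and commutators of elements of $\tilde\U_q^-(\Glie) \otimes \tilde\U_q^+(\Glie)$ with diagonal operators, remain in this weight-shifting ideal.
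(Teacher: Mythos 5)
Your proof is correct and follows essentially the same route as the paper: both arguments rest on the triangularity of Theorem \ref{apco} to show that $(v\otimes w)_{\prec}$ is $\U_q(\Hlie)^+$-stable and that the class of $v\otimes w$ in the quotient is an $\ell$-weight vector of $\ell$-weight $\Psib\Psib'$, and then lift this class back. The only (cosmetic) difference is that you perform the lifting via the primary decomposition of the commuting operators on the full finite-dimensional weight space $(V\otimes W)_{\omega_1+\omega_2}$, whereas the paper works inside the submodule $M=(v\otimes w)_{\prec}+\U_q(\Hlie)^+\cdot(v\otimes w)$ and compares $q$-characters.
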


\begin{proof} Let $M$ be the $\U_q(\Hlie)^+$-submodule of $V\otimes W$ generated by $(v\otimes w)_{\preceq}$. By the coproduct 
approximation formula (\ref{h}), we have the $\U_q(\Hlie)^+$-submodules
$$(v\otimes w)_{\prec} \subset M  = (v\otimes w)_{\prec} + \U_q(\Hlie)^+ .(v\otimes w) \subset V\otimes W.$$
Then
$$\chi_q(M) = \chi_q((v\otimes w)_{\prec}) + \chi_q(M/(v\otimes w)_{\prec}).$$
By coproduct formulas (\ref{h}) again, all weight vectors in $M/(v\otimes w)_{\prec}$ have the same $\ell$-weight $\Psib \Psib'$, and so one has
$$\chi_q(M/(v\otimes w)_{\prec}) = \text{dim}(M/(v\otimes w)_{\prec}) [\Psib \Psib'].$$
This implies
$$M = (v\otimes w)_{\prec} \oplus M'$$
where $M'\subset M_{\Psib\Psib'}$ is a space of $\ell$-weight vectors. Consider the component in $M'$ of the decomposition of $v\otimes w$ in this direct sum. 
It satisfies the properties in the statement.
\end{proof}

\begin{ex} In Example \ref{exh4} above, if $a\neq b$ we have the $\ell$-weight vector
$$(v_a^+\otimes v_b^-) + \frac{a(q - q^{-1})}{b - a}(v_a^-\otimes v_b^+) \in (v_a^+\otimes v_b^-)_{\preceq}.$$
\end{ex}

\begin{rem} In general the $\ell$-weight vector is not unique, even if $V$ and $W$ are simple. For example, in the $sl_2$-case, consider the tensor square $V^{\otimes 2}$ of a $2$-dimensional fundamental representation. Then the weight space $(V^{\otimes 2})_0$ is an $\ell$-weight space. For $v\in V_{-\omega}$, $w\in V_\omega$ non zero, then 
$$v\otimes w + V_{\omega} \otimes V_{-\omega}$$ 
is an affine subspace of dimension $1$ contained in an $\ell$-weight space.\end{rem}

Let us go back to the general case of $V$, $W$ in the category $\mathcal{O}$. Now we introduce a specific $\ell$-weight vector associated to $v\otimes w$.

As any object in the category $\mathcal{O}$, the representation $V\otimes W$ can be decomposed into a direct sum $\ell$-weight spaces. We have a corresponding projection 
$$\pi = \pi_{\Psib\Psib'} : V\otimes W\rightarrow (V\otimes W)_{\Psib\Psib'}$$ 
of $V\otimes W$ on the $\ell$-weight space associated to $\Psib\Psib'$. 

\begin{prop}\label{uniquelv} The $\ell$-weight vector $\pi(v\otimes w)$ is non-zero and 
$$\pi(v\otimes w)\in (v\otimes w)_{\preceq}.$$\end{prop}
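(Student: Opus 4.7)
The plan is to deduce the statement from Proposition \ref{proplw} combined with the observation that $(v\otimes w)_{\prec}$ is itself a $\U_q(\Hlie)^+$-submodule of $V\otimes W$.

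First I would check this stability. By Theorem \ref{apco}, the non-scalar part of $\Delta(h_{i,r})$ belongs to $\tilde{\U}_q^{-}(\Glie)\otimes\tilde{\U}_q^{+}(\Glie)$, so it sends $V_{\mu_1}\otimes W_{\mu_2}$ into $\bigoplus_{\alpha\in Q^+\setminus\{0\}} V_{\mu_1-\alpha}\otimes W_{\mu_2+\alpha}$; combined with the diagonal action of the $k_i^{\pm 1}$, this shows that any bi-weight pair in the index set of $(v\otimes w)_{\prec}$ (see (\ref{crosordering})) is sent to a sum of pairs still in that index set. Hence $(v\otimes w)_{\prec}$ is $\U_q(\Hlie)^+$-stable, and since $V\otimes W$ lies in the category $\mathcal{O}$ it decomposes into $\ell$-weight spaces; the decomposition restricts to the submodule $(v\otimes w)_{\prec}$, so every projection $\pi_{\Phib}$ preserves it.

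Then I would invoke Proposition \ref{proplw} to produce an $\ell$-weight vector $u\in(v\otimes w)_{\preceq}$ of $\ell$-weight $\Psib\Psib'$. Writing $u=v\otimes w+x$ with $x\in(v\otimes w)_{\prec}$ and using $\pi(u)=u$, one obtains
$$\pi(v\otimes w)=u-\pi(x)=v\otimes w+(x-\pi(x)).$$
The previous paragraph gives $\pi(x)\in(v\otimes w)_{\prec}$, hence $\pi(v\otimes w)\in(v\otimes w)_{\preceq}$. For non-vanishing, suppose $\pi(v\otimes w)=0$; then $u=\pi(x)\in(v\otimes w)_{\prec}$, whence $v\otimes w=u-x\in(v\otimes w)_{\prec}$, contradicting the fact that the bi-weight space $V_{\omega_1}\otimes W_{\omega_2}$ is disjoint from the sum of distinct bi-weight spaces defining $(v\otimes w)_{\prec}$.

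The main obstacle is the $\U_q(\Hlie)^+$-stability of $(v\otimes w)_{\prec}$, which hinges on the compatibility between the partial order (\ref{crosordering}) and the direction in which the off-diagonal part of $\Delta(h_{i,r})$ acts on bi-weight spaces; once this is verified, the rest follows formally from the $\ell$-weight decomposition available in $\mathcal{O}$.
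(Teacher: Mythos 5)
Your argument is correct and takes essentially the same route as the paper: both proofs rest on the fact that $(v\otimes w)_{\prec}$ is a $\U_q(\Hlie)^+$-submodule (hence preserved by the projection $\pi$), combined with the $\ell$-weight vector supplied by Proposition \ref{proplw}. Your explicit computation $\pi(v\otimes w)=v\otimes w+(x-\pi(x))$ and the non-vanishing argument via disjointness of bi-weight spaces are just a slightly more detailed write-up of the paper's own two-line proof.
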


\begin{proof} This is a direct consequence of the proof of Proposition \ref{proplw} as the 
projection $\pi(v\otimes w)$ is the $\ell$-weight vector constructed there in which is non-zero : 
$$\pi((v\otimes w)_{\prec})\subset (v\otimes w)_{\prec},$$ 
and
$$\pi(v\otimes w) \in v\otimes w + (v\otimes w)_\prec\subset M\setminus (v\otimes w)_{\prec}.$$ 
Hence the result.\end{proof}

\begin{defi}\label{defis} We define the algebraic stable map
$$S_{V,W} : V\otimes W\rightarrow V\otimes W$$
 by 
$$S_{V,W} = \pi_{\Psib\Psib'}\text{ on }(V)_{\Psib}\otimes (W)_{\Psib'}.$$
\end{defi}

\begin{prop}\label{isom} $S_{V,W}$ is a linear isomorphism.\end{prop}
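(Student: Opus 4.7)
The plan is to leverage the triangularity built into the very definition of $S_{V,W}$. Proposition~\ref{uniquelv} says that for a pure tensor $v\otimes w$ of $\ell$-weight vectors of respective weights $\omega_1=\varpi(\Psib)$, $\omega_2=\varpi(\Psib')$,
$$S_{V,W}(v\otimes w) - v\otimes w \in \bigoplus_{(\omega_1',\omega_2')\succ(\omega_1,\omega_2)}V_{\omega_1'}\otimes W_{\omega_2'}.$$
Since $V_{\omega_1}=\bigoplus_{\varpi(\Psib)=\omega_1}V_{\Psib}$ is a finite direct sum, linearity extends this relation to every $\xi\in V_{\omega_1}\otimes W_{\omega_2}$: we still have $S_{V,W}(\xi)-\xi$ in the same displayed sum of strictly $\succ$-higher summands.

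Next the problem is reduced to each weight subspace. Since each $\pi_{\Psib\Psib'}$ takes values in $(V\otimes W)_{\Psib\Psib'}\subseteq (V\otimes W)_{\varpi(\Psib)+\varpi(\Psib')}$, the operator $S_{V,W}$ preserves the weight grading, and it suffices to prove invertibility on each $(V\otimes W)_\Omega$ separately. The category $\mathcal{O}$ hypotheses supply finitely many $\lambda_j,\mu_k\in\tb^\times$ controlling the weight supports of $V$ and $W$, and for each pair $(j,k)$ the constraints $\omega_1\le\lambda_j$ and $\omega_2\le\mu_k$ together with $\omega_1\omega_2=\Omega$ confine $\omega_1$ to a finite interval in the weight lattice. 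Combined with finite-dimensionality of individual weight spaces this makes $(V\otimes W)_\Omega$ finite-dimensional, with only finitely many non-zero summands $V_{\omega_1}\otimes W_{\omega_2}$ contributing.

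To conclude I would pick a total order refining $\succ$ on these finitely many contributing pairs and concatenate bases of the summands $V_{\omega_1}\otimes W_{\omega_2}$ in that order to obtain a basis of $(V\otimes W)_\Omega$. The triangularity recalled above forces the matrix of $S_{V,W}|_{(V\otimes W)_\Omega}$ in this basis to be triangular with identity matrices on each diagonal block, so its determinant equals $1$ and it is invertible. Taking the direct sum over all $\Omega\in\tb^\times$ then produces the desired linear isomorphism on $V\otimes W$.

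The only non-trivial input is the triangularity formula itself; once Proposition~\ref{uniquelv} is available, the rest is a standard block-triangular linear-algebra reduction on finite-dimensional weight subspaces, so I do not expect any further obstacle.
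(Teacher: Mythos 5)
Your proof is correct and follows essentially the same route as the paper: both arguments rest on the triangularity from Proposition~\ref{uniquelv} with respect to the partial order $\succeq$ on pairs of weights, the fact that $S_{V,W}$ induces the identity on the associated graded pieces, and the finite-dimensionality of the weight spaces of $V\otimes W$. The paper phrases this as a filtration of $V\otimes W$ by $\U_q(\Hlie)^+$-submodules rather than as a block-unitriangular matrix, but the content is identical.
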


\begin{proof} Let us decompose $V\otimes W$
as a direct sum of tensor products of weight spaces of $V$ and $W$. Then the partial ordering $\prec$ on $P_{\mathbb{Q}}\times P_{\mathbb{Q}}$ induces
a filtration on $V\otimes W$ by $\mathcal{U}_q(\Hlie)^+$-submodules. It follows from  Proposition \ref{uniquelv} that $S_{V,W}$ is compatible 
with the filtration and that it induces the identity on the corresponding graded space. This implies the injectivity. We get the result as
the weight spaces of $V\otimes W$ are finite dimensional and stable by $S_{V,W}$.
\end{proof}

\subsection{Deformations}

In the following, it will be useful to consider deformations of algebraic stable maps which depend on a spectral parameter $u$.

As the subalgebra $\U_{q,u}(\bo) = \U_q(\bo)\otimes \CC(u)\subset \U_{q,u}(\Glie)$ is stable by the automorphism $\tau_u$ considered in Section \ref{debut}, 
for a formal parameter $u$ and $V$ a representation in the category $\mathcal{O}$, we have the deformed $\U_{q,u}(\bo)$-module $V(u)$ as above.
The representation $V(u)$ has also a decomposition into a direct of $\ell$-weight spaces, and the $\ell$-weights are formal power series with 
coefficients in $\mathbb{C}[u]$.

\begin{rem}\label{prerem}  The same proof as in Proposition \ref{proplw} gives an analog $\ell$-weight vector in the tensor product  $V(u)\otimes W$, that is when $V$ is replaced by the deformed module $V(u)$. Its $\ell$-weight is $\Psib(u)\Psib'$ where $\Psib(u)$ is defined as the $\ell$-weight 
$$\Psib(u) : z\mapsto \Psib(zu).$$
\end{rem}

\begin{prop}\label{uni} Let $V$, $W$ simple modules in the category $\mathcal{O}$ such that $V$ or $W$ is finite-dimensional. Then the $\ell$-weight vector of $\ell$-weight $\Psib(u)\Psib'$ in $(v\otimes w)_\preceq \subset V(u)\otimes W$ is unique.
\end{prop}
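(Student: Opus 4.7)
The plan is to argue uniqueness by contradiction. Suppose $\xi_1,\xi_2\in(v\otimes w)_\preceq$ are two $\ell$-weight vectors of $\ell$-weight $\Psib(u)\Psib'$; their difference $\xi:=\xi_1-\xi_2$ lies in $(v\otimes w)_\prec$ (since $(v\otimes w)_\prec$ is a genuine linear subspace) and, if non-zero, is still an $\ell$-weight vector of $\ell$-weight $\Psib(u)\Psib'$. The goal is to rule out any such non-zero $\xi$.

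The first step is to expand $\xi=\sum_\gamma a_\gamma\otimes b_\gamma$ with each $b_\gamma$ an $\ell$-weight vector of $W$ (possible since every weight space of $W$ decomposes as a direct sum of $\ell$-weight spaces) of $W$-weight strictly below $\varpi(\Psib')$, and to apply Theorem \ref{prodlweight} with $V_1=V(u)$, $V_2=W$ using the $b_\gamma$'s of minimal $W$-weight as the $v_\alpha$'s of that theorem. The output is a factorization
\[
\Psib(u)\Psib' = \Psib''(u)\cdot\Psib'_\omega,
\]
where $\Psib''(u)$ is an $\ell$-weight of $V(u)$ (necessarily the $\tau_u$-twist of an $\ell$-weight $\Psib''$ of $V$, as every $\ell$-weight of $V(u)$ arises this way) and $\Psib'_\omega$ is the $\ell$-weight of one of the selected $b_\gamma$; in particular $\varpi(\Psib'_\omega)<\varpi(\Psib')$ and $\varpi(\Psib'')>\varpi(\Psib)$, both strictly.

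The second step is to extract a contradiction from the $u$-dependence of the rewritten equation
\[
(\Psib''/\Psib)(u) = \Psib'_\omega/\Psib'.
\]
The right-hand side is independent of $u$ whereas the left-hand side is the $\tau_u$-twist of a fixed $\ell$-weight ratio. Here the finite-dimensionality hypothesis enters: applying formula (\ref{infol}) to the finite-dimensional factor writes one side as an explicit product $\prod_s A_{r_s,c_s}^{\epsilon_s}$ of Frenkel-Reshetikhin $\ell$-weights. If $V$ is finite-dimensional, $(\Psib''/\Psib)(u)=\prod_s A_{r_s,c_su}^{\epsilon_s}$, and matching this $u$-parametrized product against a $u$-independent right-hand side forces, by freeness of the normalized $\widetilde{A_{i,a}}$ (as used in the proof of Proposition \ref{caract}), every factor to cancel, hence $\Psib''=\Psib$, contradicting $\varpi(\Psib'')>\varpi(\Psib)$. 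If instead $W$ is finite-dimensional, $\Psib'_\omega/\Psib'$ is a fixed non-trivial product of $A_{r_s,c_s}^{\epsilon_s}$; for $(\Psib''/\Psib)(u)$ to equal this fixed product for every $u$ forces $\Psib''/\Psib$ to be constant in the spectral variable $z$ (hence a pure character), and then freeness again forces the product of $A$'s on the right to be trivial, contradicting $\varpi(\Psib'_\omega)<\varpi(\Psib')$.

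The main obstacle is this last step: bridging the abstract $\ell$-weight factorization given by Theorem \ref{prodlweight} with the $u$-rigidity coming from the twist $\tau_u$. The key idea is that finite-dimensionality of one tensor factor, via (\ref{infol}), turns the Cartan-Drinfeld factorization into a concrete identity between explicit products of the $A_{i,a}$, after which the freeness of the normalized $\widetilde{A_{i,a}}$ delivers the required non-cancellation and the weight inequality $\varpi(\Psib'')>\varpi(\Psib)$ is violated.
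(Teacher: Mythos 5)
Your argument is correct and is essentially the paper's proof, unpacked: the paper reduces the statement to Proposition \ref{caract} (whose proof is exactly the freeness of the normalized $\widetilde{A_{i,a}}$ via (\ref{infol})) together with the triangularity of Theorem \ref{prodlweight}, and you carry out the same mechanism explicitly, using the $u$-rigidity to rule out a second $\ell$-weight vector in $(v\otimes w)_\prec$. The only cosmetic difference is that the paper treats the case where $W$ is finite-dimensional by the twist $(V(u)\otimes W)(u^{-1})\simeq V\otimes W(u^{-1})$ rather than by your direct variant of the same computation.
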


\begin{proof} Suppose first that $V$ is finite-dimensional. It suffices to prove 
that $(v\otimes w)_\prec$ does not contain any $\ell$-weight vector $X$ of $\ell$-weight $\Psib(u)\Psib'$. 
In a decomposition of such an $\ell$-weight vector $X$
as a sum of tensor products of $\ell$-weight vectors, a term of the form $v'\otimes w'$ would occur, 
with $v'$ of $\ell$-weight $\Psib(u)$. Then it follows from Proposition \ref{caract} for 
the finite-dimensional representation $V$ that the weight of $v'$ is the weight of $v$. 
This contradicts $X\in (v\otimes w)_\prec$.

This is analog in the case $W$ is finite-dimensional as
$$(V(u)\otimes W)(u^{-1}) \simeq V\otimes W(u^{-1}).$$
\end{proof}

By Remark \ref{prerem}, for formal parameters $u, v$, the map $S_{V,W}$ can be deformed by replacing the
representations $V$, $W$ respectively by $V(u)$ and $W(v)$. We get the deformed algebraic stable map
$$S_{V,W}(u,v) : V(u)\otimes W(v)\rightarrow V(u)\otimes W(v).$$

\begin{thm}\label{ratios} $S_{V,W}(u,v)$ depends only on the quotient $u/v$ and is rational in this parameter.
\end{thm}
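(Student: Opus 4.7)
The plan is to prove the theorem in two steps: first, reduce the dependence on $(u,v)$ to a dependence on $u/v$ via a Hopf-algebra symmetry argument; second, establish rationality of $S_{V,W}(u) := S_{V,W}(u,1)$ by a spectral-projection argument on each finite-dimensional total weight space.

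For the first step, I would verify that every $\tau_c$ is a \emph{bialgebra} automorphism of $\U_q(\Glie)$. The algebra compatibility holds by definition; the coalgebra compatibility reduces to the non-trivial Chevalley generators $e_0, f_0$, and follows because every term in $\Delta(e_0) = e_0 \otimes 1 + k_0 \otimes e_0$ and $\Delta(f_0) = f_0 \otimes k_0^{-1} + 1 \otimes f_0$ has its $\ZZ$-degrees summing to the degree of the left-hand side, so $\Delta \circ \tau_c = (\tau_c \otimes \tau_c) \circ \Delta$. Consequently $V(cu) \otimes W(cv)$ is the $\tau_c$-twist of $V(u) \otimes W(v)$: because $\tau_c(k_i) = k_i$, the two share the same decomposition into weight spaces, and under the identification of underlying vector spaces any $\ell$-weight $\Psib$ of $V(u) \otimes W(v)$ transforms into $z \mapsto \Psib(cz)$. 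The partial ordering $\succ$ on $P_\Q \times P_\Q$ is therefore preserved, and the target $\ell$-weight space used to define $\pi$ in either case (namely $\Psib(zu)\Psib'(zv)$ on one side, $\Psib(zcu)\Psib'(zcv)$ on the other) matches under the identification. Hence $S_{V,W}(cu, cv) = S_{V,W}(u,v)$ as linear endomorphisms of $V \otimes W$, and specializing $c = 1/v$ gives the desired factorization through $u/v$.

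For the second step, fix weights $\omega_1, \omega_2$ and put $X = (V \otimes W)_{\omega_1 + \omega_2}$. This space is finite-dimensional and stable under $\U_q(\Hlie)^+$. By Theorem \ref{apco} combined with the twist description of $V(u)$, each $\phi_{i,m}^+$ acts on $V(u) \otimes W$ through finitely many coproduct terms on $X$, each scaled by some power $u^d$ with $d \in \ZZ$; its matrix entries on $X$ therefore lie in $\CC[u, u^{-1}]$. Viewing the $\phi_{i,m}^+$ as a commuting family of endomorphisms of the $\CC(u)$-vector space $X \otimes \CC(u)$, one gets a decomposition into generalized joint eigenspaces indexed by the distinct eigenvalue series occurring, each in $\CC(u)[[z]]$. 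Lagrange-type interpolation over this finite list then expresses the spectral projector onto any fixed eigenspace (in our case the one with eigenvalue $\Psib(zu)\Psib'(z)$) as a polynomial in the $\phi_{i,m}^+$ with coefficients rational in $u$. This projector coincides with the restriction of $S_{V,W}(u)$ to $V_{\omega_1} \otimes W_{\omega_2}$, which is therefore rational in $u$; collecting all weight spaces gives the rationality of $S_{V,W}(u)$.

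The main obstacle is the spectral-projection step: one must verify that the distinct eigenvalue series on $X$ remain distinct as elements of $\CC(u)[[z]]$ (so that interpolation is genuinely available over $\CC(u)$ rather than over an extension) and extract the projector uniformly rationally in $u$ across all weight components of $V \otimes W$. Both points reduce to careful bookkeeping with the polynomial-in-$u$ structure coming from the $\ZZ$-grading and the finite set of $\ell$-weights contributing to $X$; once these are settled, combining the two steps yields the theorem.
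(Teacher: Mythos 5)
Your proposal is correct and follows essentially the same route as the paper: the same reduction $V(u)\otimes W(v)\simeq (V(u/v)\otimes W)(v^{-1})$ for the first claim, and the same spectral-projection argument on each finite-dimensional weight space, with Laurent-polynomial matrix entries for the $h_{i,m}$, for the second. The ``main obstacle'' you flag is resolved exactly as the paper does it: by formula (\ref{h}) the $h_{i,m}$ are triangular in a basis of pure tensors of $\ell$-weight vectors ordered compatibly with $\preceq$, so their spectra consist of the Laurent-polynomial diagonal entries and the generalized eigenspace decomposition, hence the projector $\pi_{\Psibs(u)\Psibs'}$, is already defined over $\CC(u)$ with no field extension needed.
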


In the following, it will just be denoted by $S_{V,W}(u/v)$.

\begin{proof}  The first point follows from the elementary observation :
$$V(u)\otimes W(v)\simeq (V(u/v)\otimes W(1))(v).$$
Then an $\ell$-weight vector in $V(u/v)\otimes W(1)$ is still an $\ell$-weight vector when the action is twisted by $v$.

So for the second point, we can suppose $v = 1$. Let us consider a non zero weight space 
$$(V(u)\otimes W)_\lambda.$$ 
As it is finite-dimensional, there is a finite $N > 0$ such that the $\ell$-weight vectors in $(V(u)\otimes W)_\lambda$ are uniquely determined by the action of the $h_{i,m}$, $i\in I$, $0 < m \leq N$. By the definition of the coproduct, for an element $x\in \U_q(\bo)$ of degree $m\geq 0$, $\Delta(x)$ is a sum of pure tensors $a\otimes b$ with $a,b$ of degree $\leq m$. So the $h_{i,m}$ with $0 < m\leq N$ act on $V(u)\otimes W$ as polynomials in $u$ of degree lower than $M$. Besides, consider a basis of $V(u)\otimes W$ of pure tensor of $\ell$-weight vectors. Then there is a partial ordering on such a basis induced from $\preceq$. Indeed, for $v$, $v'$, $w$, $w'$ $\ell$-weight vectors of respective weights $\omega_1$, $\omega_1'$, $\omega_2$, $\omega_2'$, we set
$$v\otimes w \preceq v'\otimes w'\text{ if }(\omega_1,\omega_2)\preceq (\omega_1',\omega_2').$$ 
We can re-order the basis of $\ell$-weight vectors so that it is compatible with $\preceq$. Then the action of the $h_{i,m}$ gives triangular matrices in such a basis thanks to the coproduct formula (\ref{h}). So the operators $h_{i,m}$ are pseudo-diagonalizable on $V(u)\otimes W$ over the field $\CC(u)$. Hence the projection on the corresponding generalized eigenspaces are rational.
\end{proof}

\subsection{Examples}\label{exsldeux}
We consider various explicit examples of the maps constructed in the previous sections. Our examples contain some infinite-dimensional representations.

\begin{ex}\label{eval} Let $\Glie = \hat{sl}_2$ and for $k$ and consider the evaluation representation 
$$W_k = L(Y_{1,q^{-1}}Y_{1,q^{-3}}\cdots Y_{1, q^{1 - 2k}}),$$
see \cite[Section 4.1]{HJ}.
We can choose a basis of $\ell$-weight vectors $v_0,\cdots, v_k$ of $W_k$ so that $e_1.v_i = v_{i-1}$ for $i\geq 1$.
Then we have 
$$e_0.v_i = q^{2 - k}[i+1]_q [k - i]_q v_{i+1}\text{ and }h_{1,1}.v_i = (q^{2-2i} + q^{-2i} -q^{-2k} - q^2)(q - q^{-1})^{-1}v_i.$$
The spectrum of $h_{1,1} = q^{-2} e_1e_0 - e_0e_1$ is simple and so the action of $h_{1,1}$ is sufficient 
to determine the $\ell$-weight vectors of $W_k$. For $u$ a formal variable, this is also true on a tensor product 
 $W_k(u)\otimes W_l$ :
$$h_{1,1}.(v_i\otimes v_j) 
= q^{-2}e_1(e_0v_i\otimes v_j + q^{2i - k}v_i\otimes e_0v_j) - e_0(e_1v_i\otimes v_j + q^{k - 2i}v_i\otimes e_1v_j)$$
$$= (h_{1,1}.v_i\otimes v_j) + (v_i\otimes h_{1,1}.v_j) $$
$$+ q^{-4 + k - 2i}e_0v_i\otimes e_1v_j + q^{-2 + 2i - k}e_1v_i\otimes e_0v_j - q^{2i-k-2}e_1v_i\otimes e_0v_j - q^{k-2i}e_0v_i\otimes e_1v_j$$
$$= (h_{1,1}.v_i\otimes v_j) + (v_i\otimes h_{1,1}.v_j) + (q^{-4} - 1) q^{k - 2i} (e_0.v_i \otimes v_{j-1})$$
$$ = P_{i,j}^{k,l}(u) (v_i\otimes v_j) + u \alpha_{i,k} (q + q^{-1}) (v_{i+1}\otimes v_{j-1}),$$
where $\alpha_{i,k} = (q^{-1} - q) q^{-2i}[i+1]_q[k-i]_q$ and the
$$ P_{i,j}^{k,l}(u) = ((q^2 + 1)q^{-2j} - q^2 - q^{-2l} + u ((q^2 + 1)q^{-2i} - q^2 - q^{-2k}))(q- q^{-1})^{-1}.$$
are the eigenvalues of $h_{1,1}$ on the tensor product.
Then we get an $\ell$-weight vector of the form
$$v_i\otimes v_j + \frac{u\alpha_{i,k}(q + q^{-1})v_{i+1}\otimes v_{j-1}}{P_{i,j}^{k,l}(u) - P_{i+1,j-1}^{k,l}(u)}  + \frac{u^2\alpha_{i,k}\alpha_{i+1,k}(q + q^{-1})^2v_{i+2}\otimes v_{j-2}}{(P_{i,j}^{k,l}(u) - P_{i+1,j-1}^{k,l}(u))(P_{i,j}^{k,l}(u) - P_{i+2,j-2}^{k,l}(u))}  + \cdots$$
So by substituting $v_i\in W_k(u)$ by $v_i' = (\alpha_{0,k}\alpha_{1,k}\cdots \alpha_{i-1,k})v_i$, we get the following :
$$S_{W_k,W_l}(u).(v_i'\otimes v_j) = v_i'\otimes v_j + \sum_{0 < \lambda \leq \text{Min}(j,k-i)} \frac{u^\lambda v_{i+\lambda}'\otimes v_{j-\lambda}}{[\lambda]_q!(uq^{-2i} - q^{2(1-j)})\cdots (uq^{1-2i -\lambda} - q^{1 -2j + \lambda})}.$$
In particular, for $k = l = 1$, $v_1' = (q^{-1} - q)v_1$ and we get :
$$S_{W_1,W_1}(u).(v_1\otimes v_0) = v_1\otimes v_0 \text{ , }S_{W_1,W_1}(u) .(v_0\otimes v_1) = v_0\otimes v_1 + \frac{u(q - q^{-1})}{1 - u}v_1\otimes v_0.$$
This matches the $\ell$-weight vectors computed in Example \ref{exh4}.
\end{ex}

\begin{rem}\label{rconj}In the example above, one can observe the following : $S_{W_k,W_l}(u)$ is regular at $u = 0$ and at $u = \infty$, and $S_{W_k,W_l}(0) = \text{Id}$. This is not true in
general as the examples below will show.
\end{rem}

\begin{ex} Let $\Glie = \hat{sl}_2$ and $k \geq 0$. 
We can choose a basis of $\ell$-weight vectors $w_0, w_1 \cdots$ of the 
prefundamental representation $L_1^-$ so that $e_1.w_i = w_{i-1}$ for $i\geq 1$, see \cite[Section 4.1]{HJ}. Then
$$h_{1,1}. w_i = \frac{q^{2-2i} + q^{-2i}  - q^2}{q - q^{-1}} w_i.$$
Then as above one has :
$$S_{W_k,L_1^-}(u).(v_i'\otimes w_j) = 
v_i'\otimes w_j + \sum_{0 < \lambda \leq \text{Min}(j,k-i)} \frac{u^\lambda v_{i+\lambda}'\otimes w_{j-\lambda}}
{[\lambda]_q!(uq^{-2i} - q^{2(1-j)})\cdots (uq^{1-2i -\lambda} - q^{1 -2j + \lambda})}$$
$$S_{L_1^-,W_l}(u).(w_i'\otimes v_j) = w_i'\otimes v_j +  \sum_{0 < \lambda \leq j} \frac{u^\lambda w_{i+\lambda}'\otimes v_{j-\lambda}}
{[\lambda]_q!(uq^{-2i} - q^{2(1-j)})\cdots (uq^{1-2i -\lambda} - q^{1 -2j + \lambda})}$$
where 
$w_i' = q^{-3i(i-1)/2}(-1)^i[i]_q!w_i$.
\end{ex}

\begin{ex}\label{forb} Let $\Glie = \hat{sl}_2$ and $k \geq 0$. 
We can choose a basis of $\ell$-weight vectors $z_0, z_1 \cdots$ of the 
prefundamental representation $L_1^+$ so that $e_1.z_i = z_{i-1}$ for $i\geq 1$, see \cite[Section 7.1]{HJ}.
Then we have $e_0.z_i = - q^{i+2}\frac{[i+1]_q}{q - q^{-1}} z_{i+1}$, $h_{1,1}.z_i = (q^{-1} - q)^{-1}z_i$ and
$$S_{W_k,L_1^+}(u).(v_i'\otimes z_j) = 
\sum_{0 \leq \lambda \leq \text{Min}(j,k-i)} \frac{q^{2i\lambda + \frac{\lambda(\lambda - 1)}{2}}}{[\lambda]_q!} v_{i+\lambda}'\otimes z_{j-\lambda},$$
$$S_{L_1^+,W_l}(u).(z_i'\otimes v_j) =  \sum_{0 \leq \lambda \leq j} \frac{q^{(2j - i)\lambda - \lambda(\lambda + 1)}}{[\lambda]_q!}(-u)^{\lambda} z_{i+\lambda}'\otimes v_{j-\lambda},$$
where 
$z_i' = [i]_q!z_i$.
\end{ex}

\begin{ex}\label{exl}
Let $\Glie = \hat{sl}_2$. 
$$S_{L_1^-,L_1^+}(u).(w_i'\otimes z_j) =
\sum_{0 \leq \lambda \leq j}  \frac{q^{2i\lambda + \frac{\lambda(\lambda - 1)}{2}}}{[\lambda]_q!} w_{i+\lambda}'\otimes z_{j-\lambda}$$
$$S_{L_1^+,L_1^-}(u).(z_i'\otimes w_j) = 
 \sum_{0 \leq \lambda \leq j} \frac{q^{(2j - i)\lambda - \lambda(\lambda + 1)}}{[\lambda]_q!}(-u)^{\lambda}
 z_{i+\lambda}'\otimes w_{j-\lambda},$$
$$S_{L_1^-,L_1^-}(u).(w_i'\otimes w_j) = w_i'\otimes w_j 
+\sum_{0 < \lambda \leq j} \frac{u^\lambda w_{i+\lambda}'\otimes w_{j-\lambda}}
{[\lambda]_q!(uq^{-2i} - q^{2(1-j)})\cdots (uq^{1-2i -\lambda} - q^{1 -2j + \lambda})}$$

$S_{L_1^+,L_1^+}(u) = \text{Id}$ as each weight space of $L_1^+(u)\otimes L_1^+(v)$ is an $\ell$-weight space.


\end{ex}

\subsection{Normalized stable map}\label{normsp}

Note that $S_{V,W}(u)$ may have poles and does not necessarily converges to $S_{V,W}$ when $u\rightarrow 1$.
However, it is possible to define a renormalized limit. Indeed, there is unique
$N\in \ZZ$ such that the limit
$$\text{lim}_{u\rightarrow 1}(u - 1)^N S_{V,W}(u)$$
exists and is non zero. It is denoted
$$S_{V,W}^{norm} : V\otimes W\rightarrow V\otimes W.$$

\begin{rem} The normalized $S_{V,W}^{norm}$ is not equal to $S_{V,W}$ in general.
\end{rem}

\begin{ex} Let $\Glie = \hat{sl}_2$. We use the notations of Example \ref{eval}. For $k = l = 1$, we get :
$$S_{W_1,W_1}^{norm}.(v_1\otimes v_0) = v_1\otimes v_0 \text{ , }S_{W_1,W_1}^{norm} .(v_0\otimes v_1) = (q^{-1} - q)v_1\otimes v_0.$$
We observe that $S_{W_1,W_1}^{norm}$ is not invertible in opposition to $S_{W_1,W_1}$ (see Proposition \ref{isom}). 
In fact, $S_{W_1,W_1}$ is just the identity as the weight spaces of $W_1\otimes W_1$ are its $\ell$-weight spaces and 
so a tensor product of two $\ell$-weight vectors of $W_1$ lies in an $\ell$-weight space of $W_1\otimes W_1$.
\end{ex}

\section{$R$-matrices and finite-dimensional representations}\label{rel}

In the case of finite-dimensional representations, the algebraic stable maps are related to multiplications 
by factor of the universal $R$-matrix (Proposition \ref{relkt}), by analogy to the original stable maps. 
Note that this is not true in general for the category $\mathcal{O}$ as the action of the universal $R$-matrix is not always well-defined. 

The properties established in this section are at the origin of the present work and discussions with
A. Okounkov were crucial for its development (see in particular Remark \ref{remao}).

\subsection{Reminders on $R$-matrices}\label{interw}





Let $W$ and $W'$ be simple finite-dimensional representations of $\U_q(\Glie)$.



For $a\in\CC^*$ generic 
(that is in the complement of a finite set of $\CC^*$), we have an isomorphim of $\U_q(\Glie)$-modules 
$$\mathcal{R}_{W,W'}(a) : W\otimes W'(a)\rightarrow W'(a)\otimes W.$$
  
Considering $a$ as a variable $u$, we get a rational map in $u$
$$\mathcal{R}_{W,W'}(u) : (W\otimes W')\otimes\CC(z)\rightarrow (W'\otimes W)\otimes \CC(u).$$
This map is normalized so that for $v\in W$, $v'\in W'$ highest weight vectors,
we have 
$$(\mathcal{R}_{W,W'}(u))(v\otimes v') = v'\otimes v.$$ 
Note that $\mathcal{R}_{W,W'}(u)$ defines an isomorphism of representations of $\U_{q,z}(\Glie)$
As in section \ref{normsp}, we may consider the first term in the development in $u - 1$, and we
get a non zero morphism
$$\mathcal{R}_{W,W'} : W\otimes W' \rightarrow W'\otimes W$$
which is not necessary invertible (see \cite{Hbou} for references).

By original results of Drinfeld, it is well-known that these intertwiners come from the universal $R$-matrix 
$$\mathcal{R}(z)\in (\U_q(\Glie)\hat{\otimes} \U_q(\Glie))[[z]]$$ 
which is a solution of the Yang-Baxter equation (here $\hat{\otimes}$ is a slightly completed tensor product). 

Let $\mathcal{U}_q(\bo)^\pm = \mathcal{U}_q(\bo)\cap \mathcal{U}_q^\pm(\Glie)$ and $\mathcal{U}_q(\bo^-)^\pm = \mathcal{U}_q(\bo^-)\cap \mathcal{U}_q^\pm(\Glie)$.

The
universal $R$-matrix has a factorization \cite{kt, da}
$$\mathcal{R}(z) = \mathcal{R}^+(z)\mathcal{R}^0(z)\mathcal{R}^-(z)\mathcal{R}^\infty,$$
where $\mathcal{R}^\pm(z) \in \U_q(\mathfrak{b})^\pm\hat{\otimes}
\U_q(\mathfrak{b}^-)^\mp[[z]]$, 
$$\mathcal{R}^0(z) = \text{exp}\left( -  \sum_{m > 0,i,j\in I} \frac{(q_i - q_i^{-1})(q_j -
    q_j^{-1})m
    \tilde{B}_{i,j}(q^m)}{(q - q^{-1})[m]_q}z^mh_{i,m}\otimes h_{j,-m}\right)$$
with $\tilde{B}(q)$ the inverse of the symmetrized quantum Cartan matrix of $\overline{\mathfrak{g}}$, 
and $\mathcal{R}^\infty = q^{-t_\infty}$ where
$t_\infty\in\overline{\mathfrak{h}}\otimes\overline{\mathfrak{h}}$ is the
canonical element (for the standard invariant symmetric bilinear form
as in \cite{da}), that is, if we denote formally $q = e^h$, then
$$\mathcal{R}^\infty = e^{-h t_\infty}.$$ 
Hence if $k_i.x = q^{(\lambda,\alpha_i)}x$ and $k_i.y = q^{(\mu,\alpha_i)}y$, 
then $\mathcal{R}^\infty.(x\otimes y) = q^{-(\lambda,\mu)}x\otimes y$.

The following is a direct consequence of well-known results. The algebras $\tilde{\U}_q^\pm(\Glie)$ are defined in section \ref{qchar}.

\begin{prop} We have
$$\mathcal{R}^+(z)\in 1 + (\tilde{\U}_q^+(\Glie) \otimes \tilde{\U}_q^-(\Glie))[[z]].$$
\end{prop}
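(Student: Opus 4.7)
The plan is to derive the claim from the explicit Khoroshkin--Tolstoy/Damiani product formula for $\mathcal{R}^+(z)$ hinted at by the citations \cite{kt, da}. In that formula, $\mathcal{R}^+(z)$ is an ordered infinite product of exponentials indexed by one ``half'' of the positive real affine roots $\beta$ of $\widehat{\Glie}$ in Beck's convex order, each exponential having the shape $\exp\bigl((q_\beta - q_\beta^{-1})\, E_\beta^{(1)} \otimes E_\beta^{(2)}\, z^{m_\beta}\bigr)$, where $E_\beta^{(1)}, E_\beta^{(2)}$ are the real root vectors in the Drinfeld PBW basis.

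The key steps then run as follows. First I would identify which half of the positive real affine roots contributes to $\mathcal{R}^+(z)$: the containment $\mathcal{R}^+(z) \in \U_q(\mathfrak{b})^+ \hat{\otimes} \U_q(\mathfrak{b}^-)^-[[z]]$ (with superscripts denoting positive, resp.\ negative, Drinfeld $\mathbb{Z}$-degree) singles out the half consisting of roots $\beta = -\alpha + m\delta$ with $\alpha$ a positive root of $\overline{\Glie}$ and $m>0$: for these, $E_\beta^{(1)}$ is built from $x_{i,m}^-$-type generators (of positive Drinfeld degree $m$ but $Q$-degree $-\alpha_i$) and $E_\beta^{(2)}$ from $x_{i,-m}^+$-type generators (of negative Drinfeld degree $-m$ but $Q$-degree $+\alpha_i$). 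Hence the first tensor factor of each exponential lies in $\tilde{\U}_q^-(\Glie)$ and the second in $\tilde{\U}_q^+(\Glie)$. Next, since $\tilde{\U}_q^\pm(\Glie)$ are subalgebras, every positive power of $E_\beta^{(1)} \otimes E_\beta^{(2)}$ still lies in $\tilde{\U}_q^-(\Glie) \otimes \tilde{\U}_q^+(\Glie)$, so each exponential belongs to $1 + (\tilde{\U}_q^-(\Glie) \otimes \tilde{\U}_q^+(\Glie))[[z]]$. Finally, this last set is closed under multiplication (again by the subalgebra property), and only finitely many $\beta$ contribute to each coefficient of $z^n$ as the $\delta$-heights in Beck's order tend to infinity, so the infinite ordered product converges in the formal power series sense and stays inside $1 + (\tilde{\U}_q^-(\Glie) \otimes \tilde{\U}_q^+(\Glie))[[z]]$.

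The main obstacle I anticipate is purely conventional: one must check that the factor labelled $\mathcal{R}^+$ in \cite{kt, da} corresponds to the half $\{-\alpha + m\delta\mid m>0\}$ and not to the other half $\{\alpha + m\delta\mid m \geq 0\}$, which would lead to the opposite inclusion $1 + (\tilde{\U}_q^+ \otimes \tilde{\U}_q^-)[[z]]$. As a safety net, I would give an intrinsic verification: combine the given containment $\mathcal{R}^+(z) \in \U_q(\mathfrak{b})^+ \hat{\otimes} \U_q(\mathfrak{b}^-)^-[[z]]$ with the weight-zero property of $\mathcal{R}(z)$ (which, because $\mathcal{R}(z)$ intertwines $\Delta$ and $\Delta^{\mathrm{op}}$ and $\Delta(k_i) = \Delta^{\mathrm{op}}(k_i)$, forces each homogeneous component to have opposite $Q$-degrees in its two tensor factors), and then observe that all purely Cartan/imaginary-root contributions to $\mathcal{R}(z)$ are already isolated in the factors $\mathcal{R}^0(z)$ and $\mathcal{R}^\infty$, so that $\mathcal{R}^+(z) - 1$ has no term whose first tensor factor is of $Q$-degree zero; the sign is then forced by the Drinfeld-degree constraint from $\U_q(\mathfrak{b})^+$.
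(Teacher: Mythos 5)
Your overall strategy is the same as the paper's: the proof there simply recalls that, by the Khoroshkin--Tolstoy/Damiani factorization, $\mathcal{R}^+(z)$ is an ordered product of $q$-exponentials of scalar multiples of $z^m E_\beta\otimes F_\beta$ over one half of the positive real roots, and then says the result follows. The details you supply (each power of $E_\beta\otimes F_\beta$ stays in the subalgebra tensor product, the set $1+(\cdots)[[z]]$ is closed under multiplication, and only finitely many roots contribute to each coefficient of $z^n$) are exactly what the paper leaves implicit, and they are correct.

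The substantive issue is the one you flagged yourself, and you resolve it the opposite way from the paper. The paper's proof takes $\mathcal{R}^+$ to be the product over $\beta=\alpha+m\delta$ with $\alpha\in\Phi_0^+$ and $m\geq 0$, so the first tensor factor $E_{\alpha+m\delta}$ has \emph{positive} $Q$-degree; this is confirmed by the paper's explicit rank-one formula $\mathcal{R}^+(z)=\overset{\rightarrow}{\prod}_{m\geq 0}\exp_q\bigl((q^{-1}-q)z^m x_{1,m}^+\otimes x_{1,-m}^-\bigr)$ and by the way $\mathcal{R}^+$ is used in Proposition \ref{relkt} and Example \ref{exfam}. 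You instead assign to $\mathcal{R}^+$ the half $\{-\alpha+m\delta\mid m>0\}$, which is what the paper's example assigns to $\mathcal{R}^-$, and which yields the transposed inclusion. (The printed statement of the proposition agrees with your version and disagrees with the paper's own proof and examples, so the source is internally inconsistent at this point; if one trusts the examples, the intended inclusion is $1+(\tilde{\U}_q^+(\Glie)\otimes\tilde{\U}_q^-(\Glie))[[z]]$.) More importantly, your proposed intrinsic determination of the sign does not close the gap: knowing that the first factor lies in the Borel subalgebra and has positive Drinfeld $\ZZ$-degree does not determine the sign of its $Q$-degree, because $\U_q(\bo)$ contains both the $x^+_{i,m}$ ($m\geq 0$) and the $x^-_{i,r}$ ($r>0$), and the latter also have positive $\ZZ$-degree while having negative $Q$-degree. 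To fix the convention you must either consult the precise indexing in the cited factorization theorem or test against the explicit $\widehat{sl}_2$ formula.
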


\begin{proof} Let us recall that for a variable $x$, the $q$-exponential in $x$ is a formal power series
$\text{exp}_{q^p} (x) = \sum_{r\geq 0} \frac{x^r}{[r]_{q^p}'!}$ where
$p\in\ZZ$ and $[r]_v'! = \prod_{1\leq s\leq r}\frac{v^{2s} - 1}{v^2 -
  1} = v^{\frac{r(r-1)}{2}}[r]_v!$ for $r\geq 0$.
	
	 Let us also remind \cite{bec, da} that we have the root vectors
$E_\alpha\in \U_q(\bo)$, $F_\alpha\in \U_{q}(\bo^-)$ for
$$\alpha\in \Phi_+^{Re} = \Phi_0^+\sqcup\{\beta +m\delta|m > 0, \beta \in \Phi_0\}.$$
Here $\Phi_0$ (resp. $\Phi_0^+$) is the set of roots (resp. positive
roots) of $\overline{\mathfrak{g}}$ and $\delta$ is the standard imaginary
root of $\mathfrak{g}$.

$\mathcal{R}^+$ (resp. $\mathcal{R}^-$) is a product of
$q$-exponentials of a scalar multiple of a tensor product of root vectors $z^m E_{\alpha + m\delta}\otimes
F_{\alpha + m\delta}$ with $m \geq 0$, $\alpha\in\Phi_0^+$ (resp. with
$m > 0$, $\alpha\in\Phi_0^-$). The result follows.
\end{proof}


\begin{ex} In the case $\mathfrak{g} = \hat{sl_2}$, we have an explicit description in terms of Drinfeld generators : 
$$\mathcal{R}^+(z) = \overset{\rightarrow}{\prod}_{m\geq 0}\text{exp}_q\left((q^{-1} -
  q)z^m x_{1,m}^+\otimes x_{1,-m}^-\right),$$ 
	$$\mathcal{R}^-(z) =
\overset{\leftarrow}{\prod}_{m > 0}\text{exp}_{q^{-1}}\left((q^{-1} - q)z^m k_1^{-1}x_{1,m}^-\otimes
  x_{1,-m}^+  k_1\right),$$
$$\mathcal{R}^0(z) = \text{exp}\left( - (q - q^{-1}) \sum_{m > 0} z^m 
  \frac{m}{[m]_q(q^m + q^{-m})}h_{1,m}\otimes h_{1,-m}\right).$$
\end{ex}

\begin{rem} In the formula
\footnote{The sign in the $q$-exponential for $\mathcal{R}^-(z)$ in \cite[Example 7.1]{FH} has to be changed as above. Then, $q - q^{-1}$ should be $q^{-1} - q$ in \cite[Example 7.8]{FH} (for $L_V^-(z)$ and $t_V(z,u)$) and in \cite[Section 5.7]{FH} (for $\mathcal{L}_V^-(z)$). In the first line for the image of the transfer-matrix, $-(q - q^{-1})^2$ should be $(q - q^{-1})^2$.} given in \cite[Example 7.1]{FH}, the products defining $\mathcal{R}^+(z)$ and $\mathcal{R}^-(z)$ are
not ordered as the ordering has no importance for the examples considered in that paper 
(indeed on the representation $R_{1,1}$ there, the $x_m^+$ and $x_{m+1}^-$ act by $0$ for $m > 0$). 
In general we have to use the ordering given above, which is obtained from the convex ordering on affine roots
$$\alpha < \alpha + \delta < \alpha + 2\delta < \cdots < \delta < 2 \delta < \cdots < - \alpha +2\delta < - \alpha + \delta.$$
\end{rem}

For $\tau$ the twist we have 
$$\tau \circ \Delta = \mathcal{R}(u)\Delta \mathcal{R}^{-1}(u)$$ 
and so the specializations of $\tau \circ \mathcal{R}(z)$ give morphisms of $\U_q(\Glie)$-modules.

\subsection{Relations to known constructions}

The main motivation for this work in the theory of stable envelopes by Maulik-Okounkov \cite{mo} (see the Introduction). In the case of
finite-dimensional fundamental representations $V$, $W$ of a simply-laced quantum affine algebra, it provides the geometric construction of maps
$$\text{Stab}_{V,W}^\pm(z) : V(z)\otimes W\rightarrow V(z)\otimes W,$$
the chambers being defined from the quotient of spectral parameters (see Example \ref{exfam}).
These maps depend on additional parameters (see the Introduction), and it is expected, and proved in certain cases \cite[Section 2.3.3]{os}, that some 
choice of these parameters, $\text{Stab}_{V,W}^\pm(z)$ is obtained from $\mathcal{R}^\pm(z)$ multiplied by a factor in $(\U_q(\Hlie)^+\otimes \U_q(\Hlie)^-)[[z]]$. 

By analogy, we have the following for $V$, $W$ in the category $\mathcal{O}$ such that $W$ is simple 
finite-dimensional (not necessarily fundamental). In particular, when the statement above is established, 
the algebraic stable map $S_{V,W}(u)$ 
in this paper is the Maulik-Okounkov stable map $\tau \text{Stab}_{W,V}^+ \tau$, up to a factor which is a tensor product of Cartan-Drinfeld elements.

\begin{prop}\label{relkt} The multiplication by $\mathcal{R}^+(u)$ (resp. by $\mathcal{R}^-(u)$) on $V(u)\otimes W$ corresponds to the action of 
$$\tau\circ S_{W,V}(u^{-1}) \circ \tau\text{ (resp. }\mathcal{R}^\infty(S_{V,W}(u))^{-1}(\mathcal{R}^\infty)^{-1}).$$
\end{prop}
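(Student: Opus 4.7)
The plan is to prove both identities by a uniqueness argument: evaluated on a pure tensor $v\otimes w$ of $\ell$-weight vectors (with respective $\ell$-weights $\Psib$, $\Psib'$), each side will be identified as the unique element of a prescribed triangular form that is simultaneously a pseudo-eigenvector for a Cartan-Drinfeld action (either $\Delta$ or $\Delta^{op}$). Since $W$ is finite-dimensional, Proposition \ref{uni} (or a straightforward variant for the opposite coproduct) supplies the needed uniqueness, so it suffices to verify two things for each identity: that both sides lie in the correct filtration piece of the weight decomposition of $V(u)\otimes W$, and that both sides satisfy the appropriate Cartan-Drinfeld pseudo-eigenvalue equation with eigenvalue $\Psib(u)\Psib'$.

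For the filtration analysis, the location $\mathcal{R}^\pm(u)\in 1+(\U_q(\mathfrak{b})^\pm\hat\otimes \U_q(\mathfrak{b}^-)^\mp)[[u]]$ is decisive. On $v\otimes w\in V_{\omega_1}(u)\otimes W_{\omega_2}$, the action of $\mathcal{R}^+(u)$ produces corrections with strictly larger $V$-weight (the opposite direction from $(v\otimes w)_\prec$), while $\mathcal{R}^-(u)$ produces corrections lying in $(v\otimes w)_\prec$ (strictly smaller $V$-weight). On the right-hand side, $(S_{V,W}(u))^{-1}$ is triangular in the same direction as $S_{V,W}(u)$ (both are the identity on the associated graded by Proposition \ref{isom}), and conjugation by $\mathcal{R}^\infty$ only rescales by weight-dependent scalars, so the $\mathcal{R}^-$-side lives in $v\otimes w+(v\otimes w)_\prec$. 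Unpacking the twisted expression $\tau\circ S_{W,V}(u^{-1})\circ\tau$: the vector $S_{W,V}(u^{-1})(w\otimes v)\in w\otimes v+(w\otimes v)_\prec^{W\otimes V(u)}$ differs from $w\otimes v$ by corrections with smaller $W$-weight, and after applying $\tau$ back, these corrections have strictly larger $V$-weight in $V(u)\otimes W$, matching the direction of $\mathcal{R}^+(u)$.

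For the pseudo-eigenvector property, I use the intertwining relation $\mathcal{R}(u)\Delta(h_{i,r})=\Delta^{op}(h_{i,r})\mathcal{R}(u)$ together with the factorization $\mathcal{R}(u)=\mathcal{R}^+(u)\mathcal{R}^0(u)\mathcal{R}^-(u)\mathcal{R}^\infty$. The crucial observation is that $\mathcal{R}^0(u)$ and $\mathcal{R}^\infty$ act on each pure tensor of $\ell$-weight vectors by an explicit scalar: the former because its building blocks $h_{i,m}\otimes h_{j,-m}$ are pseudo-eigen on such tensors, the latter because $\mathcal{R}^\infty(v'\otimes w')=q^{-(\omega_1',\omega_2')}v'\otimes w'$. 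Pushing these central factors through the intertwining relation, one concludes that $\mathcal{R}^-(u)(v\otimes w)$ coincides, up to the $\mathcal{R}^\infty$-scalar, with the unique $\Delta$-pseudo-eigenvector of $\ell$-weight $\Psib(u)\Psib'$ in $v\otimes w+(v\otimes w)_\prec$, which is $S_{V,W}(u)(v\otimes w)$; the inverse in the statement reflects the fact that $\mathcal{R}^-(u)$ is applied to the pure tensor rather than to an $\ell$-weight vector. Dually, $\mathcal{R}^+(u)(v\otimes w)$ is a $\Delta^{op}$-pseudo-eigenvector of the same $\ell$-weight in the opposite filtration piece; since $\Delta^{op}$ on $V(u)\otimes W$ corresponds under $\tau$ to $\Delta$ on $W\otimes V(u)$, this vector is precisely $\tau S_{W,V}(u^{-1})\tau(v\otimes w)$.

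The main obstacle is the scalar bookkeeping: one must track how $\mathcal{R}^0(u)\mathcal{R}^\infty$ acts across the various weight components appearing in $\mathcal{R}^-(u)(v\otimes w)$, and verify that the assembled scalars reproduce exactly the conjugation $\mathcal{R}^\infty(\cdot)(\mathcal{R}^\infty)^{-1}$ appearing in the stated formula, while for $\mathcal{R}^+(u)$ the analogous scalars reorganize under $\tau$ and no extra conjugation is needed. The asymmetry between the two formulas is thus entirely due to the ordering of the factors in $\mathcal{R}^+(u)\mathcal{R}^0(u)\mathcal{R}^-(u)\mathcal{R}^\infty$; once this normalization is pinned down, uniqueness via Proposition \ref{uni} closes the argument.
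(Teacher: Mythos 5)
Your overall architecture is the same as the paper's: establish that $\mathcal{R}^{\pm}(u)$ applied to a pure tensor of $\ell$-weight vectors lands in the correct triangular affine subspace, show that the result is a pseudo-eigenvector for the relevant Cartan--Drinfeld action, and conclude by the uniqueness of Proposition \ref{uni}. The triangularity analysis and the final uniqueness step are fine. The gap is in the middle step. The paper obtains the pseudo-eigenvector property from a specific theorem of Khoroshkin--Tolstoy (cited as \cite[Proposition 5.1, end of Section 5]{kt}): the Drinfeld coproduct on the Cartan--Drinfeld subalgebra is the conjugate of the ordinary coproduct by $\mathcal{R}^+(u)$, so that $\tau\circ\mathcal{R}^+(u):V(u)\otimes_d W\to W\otimes V(u)$ is a morphism of $\U_q(\Hlie)^+$-modules and therefore sends the pure tensor $v\otimes w$ (an $\ell$-weight vector for $\Delta_d$) to an $\ell$-weight vector for $\Delta$. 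You instead try to extract this from the global intertwining relation $\mathcal{R}(u)\Delta=\Delta^{op}\mathcal{R}(u)$ together with the factorization, by ``pushing the central factors through.'' This does not isolate the individual factors: to learn what $\mathcal{R}^-(u)$ does to a pure tensor from that relation you already need to know how $\mathcal{R}^+(u)$ and $\mathcal{R}^0(u)$ interact with $\Delta^{op}(h_{i,r})$, which is essentially the other half of the statement you are proving. Moreover $\mathcal{R}^0(u)$ does not act by ``an explicit scalar'' on a pure tensor of $\ell$-weight vectors in general --- $h_{i,m}$ and $h_{j,-m}$ are only pseudo-diagonal on $\ell$-weight spaces --- and, more importantly, the various components of $\mathcal{R}^-(u)(v\otimes w)$ lie in different $\ell$-weight spaces, so there is no single scalar to push through. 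You acknowledge this as ``bookkeeping,'' but it is the substance of the step, not a detail.

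There is also a concrete error in your $\mathcal{R}^-$ half: you assert that $\mathcal{R}^-(u)(v\otimes w)$ coincides (up to the $\mathcal{R}^\infty$-conjugation) with the $\ell$-weight vector $S_{V,W}(u)(v\otimes w)$. If that were so, the proposition would read $\mathcal{R}^-(u)=\mathcal{R}^\infty S_{V,W}(u)(\mathcal{R}^\infty)^{-1}$ with no inverse. The correct statement --- visible in Example \ref{exfam}, where $\mathcal{R}^-(u)=(S_{V,W}(u))^{-1}$ exactly --- is that it is $(\mathcal{R}^-(u))^{-1}$, suitably conjugated by $\mathcal{R}^\infty$, that carries the pure tensor to the $\ell$-weight vector; equivalently $\mathcal{R}^-(u)$ carries the $\ell$-weight vector back to the pure tensor. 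Your closing remark that ``the inverse in the statement reflects the fact that $\mathcal{R}^-(u)$ is applied to the pure tensor'' contradicts the sentence preceding it rather than repairing it. To complete the proof along the paper's lines you should replace the intertwining-relation argument by the direct input from \cite{kt} (stated so that the direction of the conjugation, and hence the inverse and the $\mathcal{R}^\infty$-conjugation in the $\mathcal{R}^-$ case, come out correctly), and then your triangularity plus Proposition \ref{uni} finishes the argument exactly as you intend.
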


\begin{rem} The construction of $R$-matrices from the Cartan-Drinfeld subalgebra might be seen
as a reminiscent of the vertex algebra representations of quantum affine algebras \cite{FJ}.\end{rem}

\begin{proof} Let us explain it for $\mathcal{R}^+(u)$ (this is analog for $\mathcal{R}^-(u)$). $\mathcal{R}^+(u)$ is a product of $q$-exponentials 
in the form described in section \ref{interw}. Hence for $v\in V$, $w\in W$ $\ell$-weight vectors, we get 
$$(\tau\circ \mathcal{R}^+(u)\circ \tau).(w\otimes v)\in (w\otimes v)_{\preceq}.$$ 
Moreover it is known \cite[End of Section 5]{kt} 
that $\mathcal{R}^+(u)$ defines a morphism of $\U_q(\Hlie)^+$-modules as above
$$\tau\circ \mathcal{R}^+(u) : V(u)\otimes_d W\rightarrow W\otimes V(u).$$
Indeed, the Drinfeld coproduct is obtained from the usual coproduct by conjugation by $\mathcal{R}^+(u)$ \cite[Proposition 5.1]{kt} 
(see also \cite{ekp} for more details).

Consequently 
$$(\tau\circ \mathcal{R}^+(u)\circ \tau).(w\otimes v)$$ 
is an $\ell$-weight vector. 
Hence, by the uniqueness in Proposition \ref{uni}, this implies
$$S_{W,V}(u^{-1}) = \tau\circ \mathcal{R}^+(u)\circ \tau.$$
\end{proof}

\begin{rem}\label{remao} (i) The compatibility of $\mathcal{R}^\pm(u)$ with the actions of $\U_q(\Hlie)^+$ discussed in \cite{kt} was 
pointed out by A. Okounkov to the author as an answer to his question about the seeming compatibility between stable maps and $\ell$-weight vectors 
(see Example \ref{exh4}). 

(ii) Note that the statement of Proposition \ref{relkt} does not make sense in general in the category $\mathcal{O}$ as $\mathcal{R}^+(z)$ can not
be applied to $V\otimes W$ for general representations $V$, $W$ in this category $\mathcal{O}$.

(iii)
For $\Glie = \hat{sl_2}$, in the case of a prefundamental representation $V$ and a Kirillov-Reshetikhin module $W$, a construction is discussed in \cite{psz} in terms of equivariant $K$-theory.

(iv) $\mathcal{R}^\infty$ commutes with the twist $\tau$.
\end{rem}

It is well known the full $R$-matrix $\mathcal{R}(u)$ has a rational action up to a scalar. We deduce the following Corollary from our constructions.

\begin{cor}\label{ratfact} Let $V$, $W$ be finite-dimensional representations of $\U_q(\Glie)$. Then $\mathcal{R}^+(u)$ and $\mathcal{R}^-(u)$ define rational operators on $V(u)\otimes W$. The factor $\mathcal{R}^0(u)$ defines a rational operator up to a scalar factor.
\end{cor}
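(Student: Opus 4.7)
The plan is to derive all three rationality statements from Proposition \ref{relkt} combined with Theorem \ref{ratios} and the well-known fact that the full universal $R$-matrix $\mathcal{R}(u)$ defines, up to a scalar factor, a rational operator on $V(u) \otimes W$ for $V$, $W$ finite-dimensional.

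First I would treat $\mathcal{R}^+(u)$. By Proposition \ref{relkt}, its action on $V(u) \otimes W$ is $\tau \circ S_{W,V}(u^{-1}) \circ \tau$. Theorem \ref{ratios} gives that $S_{W,V}$ depends rationally on its spectral parameter, and conjugation by the fixed twist $\tau$ does not affect rationality, so $\mathcal{R}^+(u)$ acts rationally. For $\mathcal{R}^-(u)$, Proposition \ref{relkt} identifies its action with $\mathcal{R}^\infty (S_{V,W}(u))^{-1}(\mathcal{R}^\infty)^{-1}$. The operator $\mathcal{R}^\infty = q^{-t_\infty}$ acts as a scalar on each bi-weight component and is independent of $u$, so the rationality reduces to the rationality of $(S_{V,W}(u))^{-1}$. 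This is where I expect the only genuine (though mild) technicality: one must justify that the inverse of the rational operator $S_{V,W}(u)$ is itself rational. For this I would reuse the filtration argument from the proof of Proposition \ref{isom} extended over $\CC(u)$: ordering a basis of pure tensors of $\ell$-weight vectors compatibly with $\preceq$ (as in the proof of Theorem \ref{ratios}), the operator $S_{V,W}(u)$ is upper unitriangular in this basis. Hence it is unipotent as a $\CC(u)$-linear endomorphism, its inverse is a polynomial in its nilpotent part, and in particular rational in $u$.

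For the factor $\mathcal{R}^0(u)$, I would invert the factorization
$$\mathcal{R}(u) = \mathcal{R}^+(u)\,\mathcal{R}^0(u)\,\mathcal{R}^-(u)\,\mathcal{R}^\infty$$
to obtain
$$\mathcal{R}^0(u) = (\mathcal{R}^+(u))^{-1}\,\mathcal{R}(u)\,(\mathcal{R}^\infty)^{-1}\,(\mathcal{R}^-(u))^{-1}.$$
Here $\mathcal{R}(u)$ acts rationally on $V(u) \otimes W$ up to an overall scalar factor by a standard result (coming from the existence of meromorphic $R$-matrices between finite-dimensional representations normalized from the universal $R$-matrix). The two factors $\mathcal{R}^\pm(u)^{-1}$ are rational operators by the previous paragraph (applying the unitriangularity argument once more to invert them as rational endomorphisms), and $\mathcal{R}^\infty$ is a fixed scalar operator on each bi-weight component. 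Multiplying these yields a rational operator up to a scalar, as required.

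The main obstacle, as noted above, is really just ensuring that rational invertibility of $S_{V,W}(u)$ holds over $\CC(u)$ rather than only at generic specializations; the triangularity on the associated graded space dispatches this cleanly and simultaneously provides rational inverses for $\mathcal{R}^\pm(u)$, after which the corollary follows by a short computation from the factorization of $\mathcal{R}(u)$.
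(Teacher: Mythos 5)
Your proposal is correct and follows essentially the same route as the paper: rationality of $\mathcal{R}^\pm(u)$ via Proposition \ref{relkt} and Theorem \ref{ratios}, then rationality of $\mathcal{R}^0(u)$ up to a scalar by comparing with the full universal $R$-matrix in the factorization. The only difference is that you spell out the unitriangularity argument for inverting $S_{V,W}(u)$ over $\CC(u)$, which the paper leaves implicit (it is already contained in the proof of Proposition \ref{isom}).
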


\begin{proof} The rationality of $\mathcal{R}^\pm(u)$ follow from the rationality of the algebraic stable maps and from Proposition \ref{relkt}. 
As the universal $R$-matrix defines a rational operator up to a scalar factor, we get the result for $\mathcal{R}^0(u)$.
\end{proof}


\begin{ex}\label{exfam} Let $\Glie = \hat{sl_2}$, $V \simeq W \simeq W_1$ fundamental representation. Then the action of 
$\mathcal{R}^+(u)$ (resp. of $\mathcal{R}^-(u)$) reduces to the action of 
$$1 + \sum_{m\geq 0}(q^{-1} - q) u^m x_m^+\otimes x_{-m}^-\text{ (resp. of }1 + \sum_{m > 0}u^m (q^{-1} - q)k^{-1}x_m^- \otimes x_{-m}^+ k).$$
In particular, in the basis $(v_0\otimes v_1, v_1\otimes v_0)$, we have
$$\mathcal{R}^+(u) = \begin{pmatrix}1 & \frac{q^{-1} - q}{1 - u}\\ 0 & 1\end{pmatrix} = \tau\circ S_{W,V}(u^{-1}) \circ \tau,$$
$$\mathcal{R}^-(u) = \begin{pmatrix}1 & 0\\
\frac{u(q^{-1} - q)}{1 - u} & 1\end{pmatrix} = (S_{V,W}(u))^{-1}.$$
Note that the action of $\mathcal{R}^\infty$ on the zero weight space of $V(u)\otimes W$ is the multiplication by $q^{(\omega,\omega)} = q^{\frac{1}{2}}$.

We recover the computations in \cite[Section 7.1.7]{os} in the case of  $X = T^*\mathbb{P}^1$ the cotangent bundle of $\mathbb{P}^1$ with the natural action of $T = A\times \mathbb{C}^*$ where $A = \mathbb{C}^*$. Indeed, the action of $A$ is induced from the action on $\mathbb{C}^2$ and given by characters denoted by $u_1$, $u_2$. 
The additional factor $\mathbb{C}^*$ has a non-trivial action on the fibers of $T^*\mathbb{P}^1$ given by the character $h$. The fixed points in $X$ are 
$$X^A = \{p_0 = [1:0], p_1 = [0:1]\}.$$ 
There are $2$ chambers $\mathcal{C}_\pm = \{x\in\mathbb{R}|\pm u(x) > 0\} \subset \mathfrak{a}_{\mathbb{R}}  = \mathbb{R}$ where $u = u_1/u_2$. Then
$$\{p_0\}\succ_{\mathcal{C}_-}\{p_1\}\text{ et }\{p_1\}\succ_{\mathcal{C}_+}\{p_0\}.$$
We have the basis $[p_0]$, $[p_1]$ of $K_T(X^A)$ over $\mathbb{C}(u,h^{\frac{1}{2}})$. Using the inclusion $X^A\subset X$ and the corresponding 
injection $i : K_T(X)\hookrightarrow K_T(X^A)$, we have a corresponding basis in $K_T(X)$. In these basis, for a choice 
of a slope (see footnote 1 and \cite[Section 7.1.5]{os} for details), we get the matrices in the basis of fixed points $([p_1], [p_0])$ (here we set $q = h^{-\frac{1}{2}}$) :
$$\text{Stab}_{\mathcal{C}_+} = \begin{pmatrix} q^{-1}  (u - q^2) & 0 \\ u (q^{-1} - q) &  1- u  \end{pmatrix} = 
\begin{pmatrix} q^{-1}  (u - q^2) & 0 \\ 0 &  1- u  \end{pmatrix}
\begin{pmatrix} 1 & 0 \\ \frac{u (q^{-1} - q)}{1 - u} &  1  \end{pmatrix}
,$$
$$\text{Stab}_{\mathcal{C}_-} = \begin{pmatrix}u - 1   &  q^{-1} - q \\ 0 & q(q^{-2} - u)\end{pmatrix}
= \begin{pmatrix}u - 1   &  0 \\ 0 & q(q^{-2} - u)\end{pmatrix}  
\begin{pmatrix} 1   &  \frac{q^{-1} - q}{u - 1} \\ 0 & 1\end{pmatrix}.$$
The triangularity property can clearly be observed here. The diagonal factors correspond to the renormalization 
of $\ell$-weight vectors (see below).
\end{ex}

\begin{ex} 
Let us consider the tensor product $L_1^+\otimes W_k$ with the notations of the previous examples. Then the action
 on $L_1^+(u)\otimes W_k$ of $\mathcal{R}^+(u)$ (resp. $(\mathcal{R}^-(u))^{-1}$) reduces to 
$$\text{exp}_q\left((q^{-1} -
  q) x_{1,0}^+\otimes x_{1,0}^-\right)\text{ ( resp. }\text{exp}_{q^{-1}}\left((q - q^{-1})u k_1^{-1}x_{1,1}^-\otimes
  x_{1,-1}^+  k_1\right)).$$
So we get
$$\mathcal{R}^+(u).(z_j\otimes v_i') = \sum_{\lambda\geq 0} \frac{(q^{-1} - q)^\lambda (x_0^+)^\lambda z_j\otimes (x_0^-)^\lambda v_i'}{q^{\frac{\lambda (\lambda -1)}{2}}[\lambda ]_q!} = \sum_{0\leq \lambda\leq \text{Min}(j,k-i)} \frac{q^{2i\lambda + \frac{\lambda(\lambda - 1)}{2}}}{[\lambda]_q!} 
 z_{j-\lambda}\otimes v_{i+\lambda}'$$
which matches the formula for $\tau S_{W_k,L_1^+}(u^{-1})\tau$ in Example (\ref{forb}). The formula is obtained from 
$$(x_0^+)^\lambda .z_j = z_{j- \lambda }\text{ and }(x_0^-)^\lambda .v_i' = q^{2i\lambda  + \lambda (\lambda -1)}(q^{-1} - q)^{-\lambda } v_{i+\lambda }'$$ 
as $(x_0^-)^\lambda .v_i = ([i+1]_q\cdots [i+\lambda ]_q)([k-i]_q\cdots [k-(i+\lambda -1)]_q)v_{i+\lambda }$.

Note that we have $e_0 = k^{-1}x_{1,1}^-$ and $f_0 = x_{1,-1}^+k$ and
$$f_0^\lambda . v_j = q^{\lambda(k-2)}v_{j - \lambda}\text{ and }e_0^\lambda z_i' = q^{\lambda(i+1) + \frac{\lambda(\lambda +1)}{2}} (q^{-1} - q)^{-\lambda} z_{i + \lambda}'.$$
As $\mathcal{R}^\infty. (z_i'\otimes v_j) = q^{i(k - 2j)} z_i'\otimes v_j$ and $(\mathcal{R}^{\infty})^{-1} z_{i+\lambda}'\otimes v_{j - \lambda} =  q^{(i + \lambda)(2j - 2\lambda - k)}z_{i+\lambda}'\otimes v_{j - \lambda}$, we get also
$$(\mathcal{R}^{\infty})^{-1}(\mathcal{R}^-(u))^{-1}\mathcal{R}^\infty.(z_i'\otimes v_j)  
= \sum_{0\leq \lambda\leq j} \frac{u^\lambda(q - q^{-1})^\lambda (e_0)^\lambda z_i'\otimes (f_0)^\lambda v_j}{q^{\frac{-\lambda (\lambda - 1)}{2}}[\lambda ]_q!}$$
$$= \sum_{0\leq \lambda\leq j} \frac{(-u)^\lambda q^{\lambda(2j - i) - \lambda(1 +\lambda)} z_{i + \lambda}'\otimes  v_{j-\lambda}}{[\lambda ]_q!}.$$
This matches the formula for $S_{L_1^+,W_k}(u)$ in Example \ref{forb}.
\end{ex}

\section{$R$-matrices in the category $\mathcal{O}$}\label{rm}

In this section we give application of the results of the first sections in this paper to the construction of new $R$-matrices 
in the category $\mathcal{O}$ (Theorem \ref{isopre}) and to categorifications of remarkable relations (Theorem \ref{catqq}), the $QQ^*$-systems 
in the Grothendieck ring $K_0(\mathcal{O})$.

\subsection{Braidings in the category $\mathcal{O}$}\label{brr}
We have reminded above that the category $\mathcal{C}$ of finite-dimensional representations admits generic braidings.
The commutativity of the Grothendieck ring of the category $\mathcal{O}$ could indicate
the category $\mathcal{O}$ has the same property. However it is not the case.
Moreover tensor products of simple modules are not generically simple in the category $\mathcal{O}$. 

\begin{ex} For $\Glie = \hat{sl}_2$, let us study the tensor product of prefundamental representations $L_a^+\otimes L_b^-$.
This representation is never simple. Indeed its character is $(\sum_{r\geq 0}[-r\alpha])^2$ but the character of the simple representation of
the same highest $\ell$-weight is of the form $\sum_{0\leq r\leq R}[-r\alpha]$ where $R\in \mathbb{N}\cup {+\infty}$ (it can be realized a an 
evaluation representation of a simple $\mathcal{U}_q(sl_2)$-module). Hence $L_1^+(z)\otimes L_1^-$ is not generically simple.
\end{ex}

In addition, tensor product do not generically commute.

\begin{ex}\label{exnon} For $\Glie = \hat{sl}_2$, $L_1^+(z)\otimes L_1^-$ is not isomorphic to $L_1^-\otimes L_1^+(z)$. This can be proved as in \cite{BJMST}.  For completeness let us give an argument. Let $a,b\in\mathbb{C}^*$ and consider a basis $(w_i)_{i\geq 0}$ (resp. $(z_i)_{i\geq 0}$) of $L_b^-$ (resp. $L_a^+$) 
as above. Then the kernel of the action of $e_1$ on $(L_a^+\otimes L_b^-)_{-\alpha}$ (resp. on $(L_b^-\otimes L_a^+)_{-\alpha}$)
is generated by $u = z_1\otimes w_0 - z_0\otimes w_1$ (resp. $v = w_1\otimes z_0 - w_0\otimes z_1$). As $h_{1,1} = q^{-2}e_1 e_0 - e_0 e_1$, we get that $v$ is eigenvector of $h_{1,1}$ of eigenvalue $\frac{a - b}{q - q^{-1}}$ 
which is $0$ if and only if $a = b$. However
$$h_{1,1}.u = \frac{1}{q - q^{-1}} (z_0\otimes w_1(b(q^2-1-q^{-2}) + a) + z_1\otimes w_0(a(-q^2-1+q^{-2}) + b))).$$
And so $u$ is an eigenvector if and only $b(q^2-1-q^{-2}) + a) + (a(-q^2-1+q^{-2}) + b = 0$, that is $a = b$.
And in this case, the eigenvalue is $-a(q + q^{-1})$ which is non zero. $v$ generates a submodule of $L_b^-\otimes L_a^+$ of highest weight $-\alpha$. But $L_a^+\otimes L_b^-$ has no such submodule
if $a\neq b$. In the case $a = b$, it has such a submodule, but not isomorphic to the first one. We have proved that $L_a^+\otimes L_b^-$
is never isomorphic to $L_b^-\otimes L_a^+$.\end{ex}

However that there are many examples of braidings in the category $\mathcal{O}$.
For instance we have the following.

\begin{thm}\cite{FH}\label{stensor} Any tensor product of positive
  (resp. negative) prefundamental representations $L_{i,a}^+$
  (resp. $L_{i,a}^-$)
  is simple. 
\end{thm}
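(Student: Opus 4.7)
Plan: I will treat the positive prefundamental case; the negative case follows by an analogous argument using the dual asymptotic construction of $L_{i,a}^-$. Set $T = L_{i_1,a_1}^+ \otimes \cdots \otimes L_{i_k,a_k}^+$, put $\Psib = \prod_j \Psib_{i_j,a_j}$, and let $v = v_1 \otimes \cdots \otimes v_k$ be a tensor of highest $\ell$-weight vectors of the factors. Then $v$ is a highest $\ell$-weight vector of $T$ of $\ell$-weight $\Psib$, so the submodule $M := \U_q(\bo).v \subseteq T$ admits $L(\Psib)$ as its unique simple quotient. Since the character of a tensor product is the product of characters, we obtain the chain of inequalities in $\mathcal{E}$
\[
\chi(L(\Psib)) \;\leq\; \chi(M) \;\leq\; \chi(T) \;=\; \prod_{j=1}^{k}\chi(L_{i_j,a_j}^+).
\]
Thus the entire theorem reduces to the character identity $\chi(L(\Psib)) = \prod_j \chi(L_{i_j,a_j}^+)$: this collapses the chain into equalities and forces $M = L(\Psib) = T$, so $T$ is simple.

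To establish this character identity, I would implement the asymptotic scheme of \cite{HJ}: each $L_{i,a}^+$ is realized, up to a one-dimensional correction $[\omega^{(N)}]$, as a limit of Kirillov--Reshetikhin modules $W^{(i)}_{N,a}$, with characters of the renormalizations converging to $\chi(L_{i,a}^+)$ in $\mathcal{E}$ as $N \to \infty$. Considering the approximating $\ell$-weight $\Psib^{(N)} := \prod_j \Psib^{(N)}_{i_j,a_j}$, one invokes the classical cyclicity/simplicity results of \cite{cp} to ensure that for generic spectral parameters a tensor product of Kirillov--Reshetikhin modules is simple, whence $\chi(L(\Psib^{(N)})) = \prod_j \chi(L(\Psib^{(N)}_{i_j,a_j}))$. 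Passing to the limit $N \to \infty$, and using the convergence of the normalized $\Psib^{(N)}$ to $\Psib$, one obtains the desired identity $\chi(L(\Psib)) = \prod_j \chi(L_{i_j,a_j}^+)$.

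The main obstacle lies in the passage to the limit: one must simultaneously control that the KR tensor product remains simple for each $N$ (which typically requires a small perturbation of the $a_j$'s and a denominator argument avoiding the poles of the fusion $R$-matrices) and that the characters $\chi(L(\Psib^{(N)}))$ of the simple quotients converge to $\chi(L(\Psib))$ in $\mathcal{E}$. The latter is a semicontinuity-type statement for simple characters in the category $\mathcal{O}$, and it rests on uniform finite-dimensional control of each weight space along the asymptotic family, precisely in the spirit of \cite{HJ}. Once these analytic points are secured, the three displayed inequalities collapse to equalities and the simplicity of $T$ follows at once.
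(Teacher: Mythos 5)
First, note that the paper does not prove this statement: it is quoted from \cite{FH}, so the comparison here is with the argument of that reference. Your reduction is correct and is indeed how \cite{FH} proceeds: the tensor product $T$ contains the highest $\ell$-weight submodule $M=\U_q(\bo).v$ with simple quotient $L(\Psib)$, and the inequalities $\chi(L(\Psib))\leq\chi(M)\leq\chi(T)=\prod_j\chi(L^+_{i_j,a_j})$ collapse to equalities once one knows the character identity $\chi(L(\Psib))=\prod_j\chi(L^+_{i_j,a_j})$. So far, so good --- but this identity is precisely the hard content of the theorem (it is a separate theorem in \cite{FH}), and your proposed route to it does not hold up.

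The gap is twofold. First, the approximating Kirillov--Reshetikhin modules in the asymptotic construction have \emph{prescribed} spectral parameters determined by the fixed $a_j$ and by $N$; they are not generic, and the theorem is claimed for \emph{all} $a_1,\dots,a_k$, including coincident or resonant ones. You cannot perturb the $a_j$ to reach generic position without changing the limit object, and you cannot a priori transfer the conclusion back, since you do not yet know that $\chi(L(\prod_j\Psib_{i_j,a_j}))$ is independent of the parameters --- that independence is part of what must be proved. (The category $\mathcal{O}$ gives no free genericity: e.g.\ $L^+_{1,a}\otimes L^-_{1,b}$ is non-simple for every $a,b$.) Second, the step ``$\chi(L(\Psib^{(N)}))\to\chi(L(\Psib))$'' is not a semicontinuity principle one can invoke: characters of simple modules can jump under limits of highest $\ell$-weights. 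In \cite{HJ,FH} this is controlled not at the level of characters but at the level of modules, via explicit injective transition maps $F_{l,k}$ between (normalized) KR modules which intertwine the $\phi_i^+(z)$ up to scalars and stabilize on each weight space --- the same inductive systems that reappear in the proof of Theorem \ref{isopre} of the present paper. Without that structural input (or the actual character formulas of \cite{FH}, whose proofs for the positive and negative cases are moreover genuinely different, so ``analogous argument'' also hides real work), the two analytic points you flag as ``the main obstacle'' are exactly where the proof is, and they remain open in your write-up.
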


As a direct consequence, for each $i,j\in I$, there are isomorphisms
$$L_{i,1}^+(z) \otimes L_{j,1}^+ \simeq L_{j,1}^+\otimes L_{i,1}^+(z),$$
$$L_{i,1}^-(z) \otimes L_{j,1}^- \simeq L_{j,1}^-\otimes L_{i,1}^-(z).$$

Note also that the universal $R$-matrix $\mathcal{R}(z)$ can be generically specialized on a tensor product $V\otimes W$ of a simple finite-dimensional representation $V$ by a representation $W$ in the category $\mathcal{O}$. This leads to non zero morphisms $\mathcal{R}_{V,W}(z) : V(z)\otimes W\rightarrow W\otimes V(z)$ as in the case of finite-dimensional representations. Although $\mathcal{R}(z)$ can not be 
specialized directly on $W\otimes V(z)$, we can apply $(\mathcal{R}(z))^{-1}\circ \tau$ on $W\otimes V(z)$ to get the inverse. So we get an isomorphism. We can also consider as above the associated specialization $\mathcal{R}_{V,W}$ which is not invertible in general. 

\begin{ex}\label{catun} For $\Glie = \hat{sl}_2$ we may consider the case of $V= L(Y_{1,aq})$ fundamental representation of dimension $2$ and $W = L_{1,a}^-$ prefundamental representation.
Then $\mathcal{R}_{V,W}$ has simple image and kernel isomorphic respectively to $L_{1,aq^{-2}}^-\otimes [\omega_1]$ and $L_{1,aq^2}^-\otimes [-\omega_1]$. 
This leads to an exact sequence
$$0\rightarrow L_{1,aq^2}^-\otimes [-\omega_1] \rightarrow L(Y_{1,aq})\otimes L_{1,a}^-\rightarrow L_{1,aq^{-2}}^-\otimes [\omega_1]\rightarrow 0$$
which is a categorification of the Baxter's $QT$-relation in $K_0(\mathcal{O})$ (see \cite[Remark 4.10]{FH}) : 
$$[L(Y_{1,aq})][L_{1,a}^-]  = [L_{1,aq^{-2}}^-] [\omega_1]  + [L_{1,aq^2}^-] [-\omega_1].$$
\end{ex}

For general types, there are various generalization of the Baxter's $QT$-relation, such as the generalized Baxter's relations \cite{Fre, FH} 
or the $QQ^*$-systems considered in \cite[Section 6.1.3, Example 7.8]{HL} 
from the point of view of cluster algebras (they are obtained as cluster mutation relations, see \cite{l} for a general
point of view). 
They involve the simple representation $W = L_{i,q^r}^-$ and the simple representation $L_{i,a}^* = L(Y_{i,aq_i} \prod_{j, C_{j,i}< 0} \Psib_{j,aq_j^{C_{j,i}}})$ which is not finite-dimensional 
(except in the $sl_2$-case). The relation reads  
\begin{equation}\label{QQ}[L_{i,a}^*][L_{i,a}^-] = [\omega_i] \prod_{j, C_{j,i}\neq 0} \left[L_{j,aq_j^{C_{j,i}}}^-\right]   + [\omega_i-\alpha_i]\prod_{j, B_{j,i}\neq 0} \left[L_{j,aq_j^{- C_{j,i}}}^-\right].\end{equation}
Note that the $QQ^*$-systems are important not only from the cluster algebras point of view, 
but they also lead to the Bethe Ansatz equations \cite{FJM}.

This a motivation to construct $R$-matrices in a more general situation (see Section \ref{real} below). The relevant framework seems to be the monoidal subcategory 
$\mathcal{O}^-$ of the category $\mathcal{O}$ defined in \cite{HL}.

\begin{defi}\label{omoins}\cite{HL} The category $\mathcal{O}^-$ is the full subcategory of representations in the category $\mathcal{O}$ whose image 
in $K_0(\mathcal{O})$ are in the subring generated by finite-dimensional representations and the prefundamental representations $L_{i,a}^-$, $i\in I$, $a\in\mathbb{C}^*$. 
\end{defi}

The generalized Baxter's relations as well as the $QQ^*$-systems 
hold in the Grothendieck ring $K_0(\mathcal{O}^-)$. Moreover this ring has nice properties in the context of cluster algebras (see \cite{HL, bit}). 

\subsection{$R$-matrices by stable maps}

We would like to know how to construct braidings when the universal $R$-matrix can not be directly specialized.
To attack this problem, mimicking the approach of Maulik-Okounkov, the algebraic stable maps give a natural path.

Let $V$, $W$ be simple representations in the category $\mathcal{O}$. Then the space $V\otimes W$ has a structure of 
$\U_q(\Hlie^+)$-module from the Hopf-algebra structure of $\U_q(\bo)$. But it has also another 
$\U_q(\Hlie^+)$-module structure obtained from the Drinfeld coproduct\footnote{In simply-laced case, a geometric approach to the Drinfeld coproduct is 
proposed  in \cite{VV}.} 
$$\Delta_d : \U_q(\Hlie^+)\rightarrow  \U_q(\Hlie^+)\otimes  \U_q(\Hlie^+)$$ 
defined by
\begin{equation}\label{deltad}\Delta_d(h_{i,r}) = h_{i,r}\otimes 1 + 1 \otimes h_{i,r}\text{ , }\Delta_d(k_i) = k_i\otimes k_i.\end{equation}
Let us denote by $V\otimes_d W$ the corresponding $\U_q(\Hlie^+)$-module. Similarly, we can define a representation $V(u)\otimes_d W$.

\begin{rem} Recall the filtration of $V\otimes W$ by $\mathcal{U}_q(\Hlie)^+$-submodules associated to the partial ordering $\preceq$ 
in the proof of Proposition \ref{isom}. 
Then the $\mathcal{U}_q(\Hlie)^+$-module $V\otimes_d W$ is isomorphic to the graded module associated to this filtration.
\end{rem}

Let $\alpha(u)$ be an automorphism of the $\U_q(\Hlie)^+$-module $V(u)\otimes_d W$ and consider the composition
$$\xymatrix{ V(u)\otimes W \ar[d]_{S_{V,W}^{-1}(u)}\ar[r]^{I_{V,W}^\alpha(u)} &  W\otimes V(u)
\\V(u)\otimes_d W\ar[r]^{\tau\circ \alpha(u)} & W\otimes_d V(u)\ar[u]_{S_{W,V}(u^{-1})}},$$
where $\tau$ is the twist. We get a linear isomorphism
$$I_{V,W}^\alpha(u) : V(u)\otimes W \rightarrow W\otimes V(u).$$ 
These are candidates for $R$-matrices in the category $\mathcal{O}$, but we have to make good choices for $\alpha(u)$.

To illustrate this, consider $V$, $W$ simple finite-dimensional representations of the full quantum affine algebra $\U_q(\Glie)$. 
Recall that by Corollary \ref{ratfact}, the action of $\mathcal{R}^0(u)$ on $V(u)\otimes W$ is rational up to a scalar factor which is the eigenvalue of 
the tensor product of highest weight vectors. 
We will work with the rational part that we denote by $\overline{\mathcal{R}}^0(u)$ (it depends on $V$ and $W$, there is a slight abuse of notation). We get a diagram
$$\xymatrix{ V(u)\otimes W \ar[d]_{(\mathcal{R}^\infty)^{-1}\mathcal{R}^-(u)\mathcal{R}^\infty}\ar[r]^{I_{V,W}^\alpha(u)} &  W\otimes V(u)
\\V(u)\otimes W\ar[r]^{\tau\circ \overline{\mathcal{R}}^0(u)\mathcal{R}^\infty} & W\otimes V(u)\ar[u]_{\tau\circ\mathcal{R}^+(u)\circ\tau}}.$$
The composition is equal up to a scalar to
$$S_{W,V}(u^{-1})  \tau \overline{\mathcal{R}}^0_{V,W}(u)\mathcal{R}^\infty   S_{V,W}^{-1}(u) 
=\tau\mathcal{R}^+(u)\tau\tau \overline{\mathcal{R}}^0(u)\mathcal{R}^-(u)\mathcal{R}^\infty.$$
It coincides with the action of $\tau\mathcal{R}(u)$ up to a scalar and so it is an isomorphism of $\U_q(\Glie)$-modules. 

\begin{ex} We continue Example \ref{exfam}. In the same basis, the matrix of $\mathcal{R}^0(u)$ is
$$\begin{pmatrix}\text{exp}\left(\sum_{m > 0}\frac{u^m (q^m - q^{-m})q^{-2m}}{m(q^m + q^{-m})}\right) & 0\\ 0 & \text{exp}\left(\sum_{m > 0}\frac{u^m (q^m - q^{-m})q^{2m}}{m(q^m + q^{-m})}\right)\end{pmatrix}$$ 
$$= \text{exp}\left(\sum_{m > 0}\frac{u^m (q^m - q^{-m})q^{2m}}{m(q^m + q^{-m})}  \right)\begin{pmatrix}\frac{(1 - uq^2)(1-uq^{-2})}{(1 - u)^2} & 0 \\ 0 & 1\end{pmatrix}.$$
So for the $R$-matrix $\mathcal{R}(u) = \mathcal{R}^+(u)\mathcal{R}^0(u)\mathcal{R}^-(u)\mathcal{R}^\infty$ 
we recover the well-known matrix up to a scalar factor : 
$$\begin{pmatrix}\frac{q^{-1}(u-1)}{u - q^{-2}} &\frac{1 - q^{-2}}{u - q^{-2}}\\\frac{u(1 - q^{-2})}{u - q^{-2}}&\frac{q^{-1}(u-1)}{u - q^{-2}} \end{pmatrix}.$$
\end{ex}

\subsection{Morphism for the Cartan-Drinfeld subalgebra} 

Consider $V$ and $W$ simple modules in the category $\mathcal{O}$. 
Recall the category $\mathcal{O}^-$ in Definition \ref{omoins}. 

The statement of Proposition \ref{caract} is also true for simple representations in the category $\mathcal{O}^-$. 
Indeed, the proof relies on the property (\ref{infol}) of $\ell$-weights of simple finite-dimensional representations which is 
also satisfied for simple representations in the category $\mathcal{O}^-$ \cite[Section 7.2]{HL}. Consequently, the
statement of Proposition \ref{uni} (uniqueness of $\ell$-weight vectors) is also satisfied.


\begin{prop}\label{hiso} Suppose that one of the simple representations $V$ or $W$ is finite-dimensional, or more generally in the category $\mathcal{O}^-$. Then 
$$S_{V,W}(u): V(u)\otimes_d W\rightarrow V(u)\otimes W$$ 
is an isomorphism of $\U_q(\Hlie^+)$-modules.\end{prop}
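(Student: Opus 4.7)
My plan is to reduce the proposition to a single intertwining identity on the generators of $\U_q(\Hlie^+)$, and then to eliminate the remaining error term using Theorem \ref{apco} combined with the uniqueness of $\ell$-weight vectors (a consequence of Proposition \ref{caract} in $\mathcal{O}^-$). The map $S_{V,W}(u)$ is already a linear isomorphism by Proposition \ref{isom}, so it suffices to prove that
$$S_{V,W}(u)\circ\Delta_d(h)=\Delta(h)\circ S_{V,W}(u)$$
for every $h\in\U_q(\Hlie^+)$. As $\Delta$ and $\Delta_d$ are algebra morphisms which coincide on the generators $k_i^{\pm 1}$, this reduces to checking the identity on the generators $h_{i,r}$ with $r>0$.

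Fix a pure tensor $v\otimes w$ with $v\in V_{\Psibs}$ and $w\in W_{\Psibs'}$. On the Drinfeld side, $\Delta_d(h_{i,r})=h_{i,r}\otimes 1+1\otimes h_{i,r}$ preserves $V_{\Psibs}\otimes W_{\Psibs'}$, and by Definition \ref{defis} $S_{V,W}(u)$ acts on this subspace as the projection $\pi_{\Psib(u)\Psib'}$. On the Hopf side, $\Delta(h_{i,r})$ preserves every $\ell$-weight space of $V(u)\otimes W$, and hence commutes with $\pi_{\Psib(u)\Psib'}$. Subtracting, the obstruction to the intertwining at $v\otimes w$ reads
$$\pi_{\Psib(u)\Psib'}\big[(\Delta_d(h_{i,r})-\Delta(h_{i,r}))(v\otimes w)\big],$$
and it remains to check this is zero. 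By Theorem \ref{apco}, $\Delta(h_{i,r})-\Delta_d(h_{i,r})\in\tilde{\U}_q^-(\Glie)\otimes\tilde{\U}_q^+(\Glie)$, so when evaluated on $v\otimes w$ it lands entirely in $(v\otimes w)_{\prec}$.

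The main obstacle, and the reason the $\mathcal{O}^-$ assumption enters, is the claim that $(v\otimes w)_{\prec}$ contains no $\ell$-weight vector of $\ell$-weight $\Psib(u)\Psib'$. This is the extension to $\mathcal{O}^-$ of Proposition \ref{uni} highlighted just above the statement. Concretely, such a hypothetical vector $y$ would, via Theorem \ref{prodlweight}, produce a factorization $\Psib(u)\Psib'=\Psib_1(u)\Psib_2$ where $\Psib_2$ is the $\ell$-weight of some minimal-weight $\ell$-weight component in $W$ appearing in the decomposition of $y$ and $\Psib_1$ is an $\ell$-weight of $V$; the condition $y\in(v\otimes w)_{\prec}$ forces $\varpi(\Psib_2)\ne\varpi(\Psib')$ and thus $\varpi(\Psib_1)\ne\varpi(\Psib)$ by weight conservation. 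Yet comparing the $u$-dependent normalized parts in $\Psib_1(u)\Psib_2=\Psib(u)\Psib'$ forces $\widetilde{\Psib_1}=\widetilde{\Psib}$, and Proposition \ref{caract} (valid for $V\in\mathcal{O}^-$) then gives $\varpi(\Psib_1)=\varpi(\Psib)$, a contradiction. The case $W\in\mathcal{O}^-$ is handled symmetrically by applying Proposition \ref{caract} to $\Psib_2$. Once this vanishing is secured, the intertwining extends from the generators $h_{i,r}$ to all of $\U_q(\Hlie^+)$, and together with Proposition \ref{isom} this yields the desired isomorphism of $\U_q(\Hlie^+)$-modules.
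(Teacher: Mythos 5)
Your proof is correct and rests on the same two pillars as the paper's own argument: the triangularity of $\Delta(h_{i,r})$ from Theorem \ref{apco}, and the vanishing of the $\Psib(u)\Psib'$-component of $(v\otimes w)_\prec$, which the paper likewise deduces from Proposition \ref{caract} (extended to $\mathcal{O}^-$) via the mechanism of Proposition \ref{uni} and Theorem \ref{prodlweight}. The only difference is in the concluding step, and it is to your advantage: you verify the intertwining identity $S_{V,W}(u)\circ\Delta_d(h_{i,r})=\Delta(h_{i,r})\circ S_{V,W}(u)$ directly on the generators, whereas the paper compares the nilpotency orders of $(h_{i,m}-\gamma_{i,m})$ on the two $\ell$-weight vectors, which yields the module isomorphism less directly.
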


\begin{proof} Let $v\otimes w\in V_{\Psib}\otimes W_{\Psib'}$. Then 
$$v\otimes w \in V(u)\otimes_d W$$ 
and 
$$S_{V,W}(u)(v\otimes w)\in V(u)\otimes W$$ 
are $\ell$-weight vectors of $\ell$-weight $\Psib(u)\Psib'$. 
Then $S_{V,W}(u)$ defines a linear isomorphism 
$$S_{V,W}(u) : (V_{\Psib}(u) \otimes W_{\Psib'}) = (V(u)\otimes_d W)_{\Psib(u)\Psib'}\rightarrow (V(u)\otimes W)_{\Psib(u)\Psib'}$$
between the corresponding $\ell$-weight spaces.
Now it follows from the coproduct formula (\ref{h}) that for any $i\in I$, $m > 0$, we have
$$(h_{i,m}.S_{V,W}(u) - S_{V,W}(u).h_{i,m})(v\otimes w) \in (v\otimes w)_\prec \cap (V(u)\otimes W)_{\Psib(u)\Psib'}.$$
But from the hypothesis, this intersection is zero  as in the proof of Proposition \ref{uni}. Hence the result.
\end{proof}

It implies that $S_{V,W}^{norm}$ is a non-zero morphism of $\U_q(\Hlie^+)$-modules.

As another consequence, $I_{V,W}^\alpha(u)$ is an isomorphism of $\U_q(\Hlie)^+$-modules if $V$ or $W$ is in the category $\mathcal{O}^-$, for any $\alpha$ 
as in the previous section. In particular, for $\alpha = \text{Id}$ and $I_{V,W}(u) = I_{V,W}^{\text{Id}}(u)$, we get the following.

\begin{thm}\label{isomh} Suppose one of the simple representations $V$ or $W$ is in the category $\mathcal{O}^-$. We get an isomorphism of $\U_q(\Hlie^+)$-modules 
\begin{equation}\label{diag}\xymatrix{ V(u)\otimes W \ar[d]_{S_{V,W}^{-1}(u)}\ar[r]^{I_{V,W}(u)} &  W\otimes V(u)
\\V(u)\otimes_d W\ar[r]^\tau & W\otimes_d V(u)\ar[u]_{S_{W,V}(u^{-1})}}.\end{equation}
\end{thm}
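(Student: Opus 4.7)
The plan is to read the diagram (\ref{diag}) as a composition of three maps and show that each of them is an isomorphism of $\U_q(\Hlie)^+$-modules; $I_{V,W}(u)$ is then an isomorphism as a composition of such.

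First, I would apply Proposition \ref{hiso}: under the hypothesis that one of $V,W$ lies in $\mathcal{O}^-$, the map $S_{V,W}(u):V(u)\otimes_d W\to V(u)\otimes W$ is already known to be an isomorphism of $\U_q(\Hlie)^+$-modules, so its inverse $S_{V,W}^{-1}(u)$ gives the first leg of (\ref{diag}).

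Second, I would handle the twist $\tau: V(u)\otimes_d W \to W\otimes_d V(u)$. This is the only step that uses the special form of the Drinfeld coproduct (\ref{deltad}): because $\Delta_d(h_{i,r})=h_{i,r}\otimes 1+1\otimes h_{i,r}$ and $\Delta_d(k_i)=k_i\otimes k_i$ are invariant under swapping the two factors, $\Delta_d$ is cocommutative on $\U_q(\Hlie)^+$. Hence $\tau$ intertwines the $\U_q(\Hlie)^+$-actions on $V(u)\otimes_d W$ and on $W\otimes_d V(u)$, and it is obviously bijective. This is the conceptual heart of the argument, though once one notes the cocommutativity the verification is immediate.

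Third, I would apply Proposition \ref{hiso} a second time, now to the pair $(W,V(u))$, to conclude that $S_{W,V}(u^{-1}): W\otimes_d V(u)\to W\otimes V(u)$ is an isomorphism of $\U_q(\Hlie)^+$-modules. The only point requiring a brief check is that the hypothesis of Proposition \ref{hiso} is still satisfied: if $V\in\mathcal{O}^-$ then $V(u)\in\mathcal{O}^-$ because the twist by $\tau_u$ sends $\ell$-weights of the form considered in \cite[Section 7.2]{HL} to $\ell$-weights of the same form, and if instead $W\in\mathcal{O}^-$ there is nothing to check. Composing the three isomorphisms yields $I_{V,W}(u)$ as an isomorphism of $\U_q(\Hlie)^+$-modules, proving the theorem. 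The main (and essentially only) obstacle is the invocation of Proposition \ref{hiso}, which has already been disposed of; the remainder of the argument is bookkeeping around the cocommutativity of $\Delta_d$ on the Cartan-Drinfeld subalgebra.
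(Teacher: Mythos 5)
Your proof is correct and matches the paper's argument: the paper states this theorem as an immediate consequence of Proposition \ref{hiso}, obtained exactly by composing the two stable-map isomorphisms with the twist $\tau$, which intertwines the Drinfeld-coproduct actions by cocommutativity of $\Delta_d$ on $\U_q(\Hlie)^+$. Your extra check on the hypothesis for the second application of Proposition \ref{hiso} is harmless but unnecessary, since the hypothesis there is symmetric in the pair and coincides with that of the theorem.
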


\begin{rem} As discussed above, the map $I_{V,W}(u)$ may have poles.
\end{rem}

\begin{ex} Although they are not isomorphic as $\U_q(\bo)$-modules (see Example \ref{exnon}), $L_{i,1}(z)^+\otimes L_{j,1}^-$ and 
$L_{j,1}^-\otimes L_{i,1}(z)^+$ are isomorphic as $\U_q(\Hlie)^+$-modules.
\end{ex}

\subsection{Braidings in the category $\mathcal{O}^-$}

The following result is one of the main applications of the constructions in this paper.

\begin{thm}\label{isopre} For $V$ and $W$ simple representations in the category $\mathcal{O}^-$, there is $\alpha(u)$ 
automorphism of the $\U_q(\Hlie)^+$-module $V(u)\otimes_d W$ so that 
$$I_{V,W}^\alpha(u) : V(u)\otimes W \rightarrow W\otimes V(u)$$ 
is an isomorphism of $\U_q(\bo)$-modules.
\end{thm}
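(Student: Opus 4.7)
\emph{Proof plan.} The plan is to take $\alpha(u)$ to be, up to a renormalization scalar, the Cartan-Drinfeld operator corresponding to the factor $\mathcal{R}^0(u)\mathcal{R}^\infty$ of the universal $R$-matrix, acting on the $\ell$-weight spaces of $V(u)\otimes_d W$, and to show that with this choice the composite
\[
I_{V,W}^\alpha(u) = S_{W,V}(u^{-1})\circ \tau \circ \alpha(u) \circ S_{V,W}^{-1}(u)
\]
is an isomorphism of $\U_q(\bo)$-modules.

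The first step is to define $\alpha(u)$ explicitly on $\ell$-weight spaces. On a pure tensor $V_{\Psibs}(u)\otimes_d W_{\Psibs'}$ I let $\alpha(u)$ act by the scalar one obtains by evaluating the formulas of Section \ref{interw} for $\mathcal{R}^0(u)\mathcal{R}^\infty$ on the pseudo-eigenvalues of the $h_{i,\pm m}$ read off from $\Psib(u)$ and $\Psib'$. Since simple modules in $\mathcal{O}^-$ have $\ell$-weights of the multiplicative form (\ref{infol}), as recalled just before Proposition \ref{hiso} and \cite[Section 7.2]{HL}, the same computation that establishes Corollary \ref{ratfact} shows this scalar is rational in $u$, so $\alpha(u)$ is a well-defined automorphism of the $\U_q(\Hlie)^+$-module $V(u)\otimes_d W$. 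Combined with Theorem \ref{isomh}, this already makes $I_{V,W}^\alpha(u)$ an isomorphism of $\U_q(\Hlie)^+$-modules.

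The second step is to check intertwining for the remaining Borel generators $e_i$, equivalently for the positive Drinfeld generators $x_{i,m}^+$ with $m\ge 0$. When both $V$ and $W$ are finite-dimensional this is exactly the content of Section \ref{brr}: by Proposition \ref{relkt} the factors $S_{W,V}(u^{-1})$ and $S_{V,W}^{-1}(u)$ can be rewritten as $\mathcal{R}^+(u)$ and $\mathcal{R}^-(u)$ (up to the twist $\tau$ and the factor $\mathcal{R}^\infty$), so that $I_{V,W}^\alpha(u)$ coincides, up to a scalar, with the $\U_q(\Glie)$-module isomorphism $\tau\circ\mathcal{R}(u)$. To transport this conclusion to general simple $V,W$ in $\mathcal{O}^-$, I would use that each simple $L(\Psib)$ in $\mathcal{O}^-$ is realized, in the style of \cite{HJ, FH}, as a subquotient of an asymptotic limit of tensor products of Kirillov-Reshetikhin modules, and that every ingredient of $I_{V,W}^\alpha(u)$ depends rationally on $u$ and on data controlled purely by $\ell$-weights (Theorem \ref{ratios} together with Step 1). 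The intertwining for the $x_{i,m}^+$, verified for the finite-dimensional approximants, then descends to $L(\Psib)$ via the uniqueness of the $\ell$-weight vector in $(v\otimes w)_\preceq$ provided by the extension of Proposition \ref{uni} to $\mathcal{O}^-$ recalled before Proposition \ref{hiso}.

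The hard part of this strategy will be controlling the limit uniformly: one has to ensure that the finite-dimensional identities descend to the correct subquotient realizing $L(\Psib)$, and that the Cartan-Drinfeld scalar $\alpha(u)$ does not degenerate in the limit nor introduce spurious poles that would obstruct equivariance on a given $\ell$-weight space. Once $\U_q(\bo)$-equivariance of $I_{V,W}^\alpha(u)$ is secured, its invertibility as a linear map is automatic from Proposition \ref{isom} and from the invertibility of the scalar automorphism $\alpha(u)$, so the composite is the desired isomorphism of $\U_q(\bo)$-modules.
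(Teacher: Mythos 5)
Your overall strategy is the paper's: take $\alpha(u)$ to come from the Cartan factor of the universal $R$-matrix, observe that for finite-dimensional modules the composite is $\tau\circ\mathcal{R}(u)$ up to a scalar and hence a module map, and pass to general simple objects of $\mathcal{O}^-$ by an asymptotic limit. But the two points you defer are exactly where the proof lives, and your plan as stated does not close them. First, your definition of $\alpha(u)$ by ``evaluating $\mathcal{R}^0(u)\mathcal{R}^\infty$ on the pseudo-eigenvalues read off from $\Psib(u)$ and $\Psib'$'' is not well-defined for a general simple $W$ in $\mathcal{O}^-$: the second tensor factor of $\mathcal{R}^0(u)$ involves the negative modes $h_{j,-m}$, which do not lie in $\U_q(\bo)$ and whose action on a $\U_q(\bo)$-module such as $L_{i,a}^-$ is neither defined nor encoded in the $\ell$-weight $\Psib'$ (which records only the $\phi^+_{i,m}$). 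Moreover Corollary \ref{ratfact} gives rationality only for finite-dimensional $V$, $W$, so it cannot be invoked to make your scalar rational on an infinite-dimensional tensor product. The paper instead defines $\alpha(u)$ as a limit: it takes finite-dimensional approximations $W_k$ of $W$ (on which the full algebra, hence $h_{j,-m}$, acts), and uses the injections $F_{l,k}:W_k\to W_l$, which intertwine $\phi_j^+(z)$ up to an explicit scalar, to show that the rational part $\overline{\mathcal{R}}^0(u)$ commutes with $\mathrm{Id}\otimes F_{l,k}$ on the nose and therefore stabilizes; this stabilization is what produces a well-defined rational $\alpha(u)$ on $V(u)\otimes W$.

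Second, for the intertwining of the $e_i$ you acknowledge that ``controlling the limit uniformly'' is the hard part but do not supply the mechanism. The paper's mechanism is the inductive system $G_{l,k}:V_k\otimes W_k\to V_l\otimes W_l$: because these maps commute with $\phi_j^+(z)$ up to scalars, and because the algebraic stable maps are entirely characterized by the Cartan--Drinfeld action, one gets $(G_{m,l}\,S_{V_l,W_l}(u)\,G_{l,k}).v=(S_{V_m,W_m}(u)\,G_{m,k}).v$ for $l\le m$ large enough, i.e.\ the stable maps are stationary on any fixed vector along the system. This is what lets the finite-dimensional identity (the composite equals $\tau\circ\mathcal{R}(u)$ up to a scalar, hence is a morphism) pass to the limit vector by vector. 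Invoking the uniqueness of the $\ell$-weight vector in $(v\otimes w)_\preceq$, as you propose, does not by itself control the compatibility of the whole composite with the approximating system; you need the stabilization statement for the stable maps themselves. So the architecture is right, but the two essential ideas --- defining $\alpha(u)$ as a stabilized limit rather than by direct evaluation, and the stationarity of $S_{V_k,W_k}(u)$ along $G_{l,k}$ --- are missing.
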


\begin{proof} Let us recall each simple module $L(\Psib)$ in the category $\mathcal{O}^-$, there is a sequence of finite-dimensional modules
constructed in \cite[Section 7.2]{HL} whose $q$-characters converge to $\chi_q(L(\Psib))$, up to a normalization, see \cite[Theorem 7.1]{HL} 
(in the case when $L(\Psib)$ is a prefundamental representation, it is a sequence of Kirillov-Reshetikhin modules considered in 
\cite[Section 4.1]{HJ}). Consider the sequences $V_k = L(N_k)$, $W_k = L(M_k)$ associated respectively to $V$, $W$. 
Then for $l\geq k$ we get as in \cite[Section 4.2]{HJ} an injective linear morphism
$$F_{l,k} :  W_k \rightarrow  W_l.$$
It is obtained as the composition of the surjective morphism 
$$W_k\otimes L(M_lM_k^{-1})\rightarrow W_l$$ 
by the embedding 
$$W_k\rightarrow W_k\otimes v_{l-k}$$ 
where $v_{l-k}$ is a fixed highest weight vector of $L(M_l M_k^{-1})$.
As established in \cite{HJ}, $F_{l,k}$ is compatible with the $x_{i,m}^+$ and satisfies for any $j\in I$ :
$$\phi_j^+(z)\circ F_{l,k} = (M_l M_k^{-1})(\phi_j^+(z))\times (F_{l,k} \circ \phi_j^+(z)),$$
where we remind that the scalar $(M_l M_k^{-1})(\phi_j^+(z))\in\mathbb{C}((z))$ is the eigenvalue of $\phi_j^+(z)$ on an $\ell$-weight vector of corresponding monomial $M_l M_k^{-1}$. 

In the same way, we have injective linear morphisms $F_{l,k}' : V_k\rightarrow V_l$ for $l\geq k$, with the same properties.

Now, for $l\geq k$ again, we have an injective linear morphism
$$G_{l,k} : V_k\otimes W_k \rightarrow V_l \otimes W_l$$
obtained as a composition 
$$L(N_k)\otimes L(M_k) \rightarrow L(N_k)\otimes L(M_k)\otimes L(N_l/N_k)\otimes L(M_l/M_k)$$
$$\rightarrow L(N_k)\otimes L(N_l/N_k)\otimes L(M_k)\otimes L(M_l/M_k)
\rightarrow L(N_l)\otimes L(M_l).$$
The first arrow is constructed as above by using a highest weight vector of $L(N_l/N_k)\otimes L(M_l/M_k)$. 
The last two arrows are morphisms of representations (which exist as $L(M_k)\otimes L(N_l/N_k)$, 
$L(N_k)\otimes L(N_l/N_k)$ and $L(M_k)\otimes L(M_l/M_k)$ are cyclic).
Then $G_{l,k}$ has properties analogous to $F_{l,k}$. In the same way, we have
$$G_{l,k}' : W_k\otimes V_k\rightarrow W_l\otimes V_l,$$
and corresponding deformations $G_{l,k}(u)$, $G_{l,k}'(u)$, $F_{l,k}(u)$.
These maps commute with $\phi_j^+(z)$ up
to a scalar multiple, this is enough to characterize the algebraic stable maps 
$$S_{W_k,V_k}(u) : W_k\otimes_d V_k(u)\rightarrow W_k\otimes V_k(u)$$
which are constructed from the action of the Cartan-Drinfeld subalgebra. 
Then for $l\geq k$, we have
$$G_{l,k}(u)\circ S_{W_k,V_k}(u) = S_{W_l,V_l}(u)\circ (F_{l,k}\otimes F'_{l,k}(u)).$$
Precisely, both maps send a tensor product of $\ell$-weight vectors to the projection on the 
corresponding $\ell$-weight space in $W_l\otimes V_l(u)$.
This means that $S_{W_k,V_k}(u)$ is stationary when $k\rightarrow +\infty$ 
(this can be already observed in examples for the $sl_2$-case in Section \ref{exsldeux}).

Now, we have an isomorphism of finite-dimensional representations
$$\mathcal{R}_k(u) : V_k(u)\otimes W_k\rightarrow W_k\otimes V_k(u).$$
We establish by induction on the height of a weight space that for $l\geq k$ we have
$$\mathcal{R}_l(u)\circ G_{l,k}(u) = G_{l,k}'(u)\circ \mathcal{R}_k(u).$$
Indeed, we remind that there are no primitive vectors in $W_k\otimes V_k(u)$ which are not highest weight vectors
and we observe the following, for $i\in I$, $m\in \mathbb{Z}$ :
$$x_{i,m}^+\mathcal{R}_l(u) G_{l,k}(u) = \mathcal{R}_l(u)G_{l,k}(u)x_{i,m}^+ = G_{l,k}'(u)\mathcal{R}_k(u)x_{i,m}^+ = x_{i,m}^+G_{l,k}'(u)\mathcal{R}_k(u).$$
Hence we obtain stationary operators
$$ S_{W_k,V_k}^{-1}(u^{-1})\circ \mathcal{R}_k(u)\circ S_{V_k,W_k}(u)  : V_k(u)\otimes_d W_k\rightarrow W_k\otimes_d V_k(u)$$
with a well-defined limit 
$$\alpha(u) : V(u)\otimes_d W\rightarrow W\otimes_d V(u)$$
(which can be computed explicitly from the abelian part $\mathcal{R}^0$ of the universal $R$-matrix).
By construction, the corresponding composition $I_{V,W}^\alpha(u)$ is a morphism of representations. 
This implies the result.


 



\end{proof}

\begin{rem} One can make explicit the fact that $I_{V,W}^\alpha(u)$ is a morphism of representations. 
As the Borel algebra is generated by its intersection with the asymptotical algebra $\tilde{\mathcal{U}}_q(\mathfrak{g})$ of \cite{HJ} 
and by the Cartan subalgebra, it suffices to consider $g$ in this intersection. Then for $v\in V\otimes W$ we have  
$$g.v = Lim_{k\rightarrow \infty} G_{\infty,k}g.G_{\infty,k}^{-1}v.$$ 
Besides, one has $I_{V,W}^\alpha G_{\infty,k} = G_{\infty,k}' R_k$ on each weight space for $k$ large enough. Hence
$$g.(I_{V,W}^\alpha.v) = \text{Lim}_{k\rightarrow \infty} G'_{\infty,k} g (G'_{\infty,k})^{-1}I_{V,W}^\alpha v
= \text{Lim}_{k\rightarrow \infty} G'_{\infty,k}g R_k(u)G_{\infty,k}^{-1} v$$
$$= \text{Lim}_{k\rightarrow \infty} G'_{\infty,k}R_k(u)gG_{\infty,k}^{-1} v
= \text{Lim}_{k\rightarrow \infty} I_{V,W}^\alpha (G_{\infty,k} g G_{\infty,k}^{-1} v)
= I_{V,W}^\alpha(g.v).$$
\end{rem}

\begin{rem}\label{remce} There are counter-examples when the representations are not in the category $\mathcal{O}^-$ : 
the fact that $\mathcal{R}^0(u)$ converges does not imply that we get a morphism. For example in the $sl_2$-case 
consider the limit when $k\rightarrow +\infty$ of algebraic stable maps on $L_1^+(u)\otimes W_k$ where $W_k$ converges to $L_1^-(u)$.  We can use 
$$\mathcal{R}^0(u) = \text{exp}\left( - (q - q^{-1}) \sum_{m > 0} u^m \frac{m}{[m]_q(q^m + q^{-m})}h_{1,m}\otimes h_{1,-m}\right).$$
Each operator $h_{1,m}$ has a scalar action $\frac{\text{Id}}{m(q^{-1}  - q)}$ on $L_1^+$. Hence we get the operator
$$\text{Id}\otimes \text{exp}\left( \sum_{m > 0} \frac{u^m}{[m]_q(q^m + q^{-m})} h_{1,-m}\right).$$
The space $W_k$ has a basis $(v_j)_{0\leq j\leq m}$ of eigenvectors of $\phi^-(z)$ with eigenvalue
$$q^{2j - k}\frac{(1 - q^{2k}z^{-1})(1-q^{-2}z^{-1})}{(1 - q^{2j - 2}z^{-1})(1 - q^{2j}z^{-1})}.$$
and so the eigenvalue of $(q^{-1} - q) h_{1,-m}$ on $v_j$ is $ q^{2(j - 1)m} + q^{2jm} - q^{2km} - q^{-2m}$. 
Then $L_1^+(u)\otimes v_j$ is an eigenspace of $\mathcal{R}^0(u)$ with  eigenvalue
$$\text{exp}\left(  \sum_{m > 0} \frac{u^m }{q^{2m} + q^{-2m}} (-q^{2(j - 1)m} - q^{2jm} + q^{2km} + q^{-2m})\right).$$
So if we set
$$\alpha(k) = \text{exp}\left(  \sum_{m > 0} \frac{u^m }{q^{2m} - q^{-2m}} q^{2km} \right),$$
the action of the operator $\alpha(k) \mathcal{R}_0(u)$ does not depend on $k$. This gives a well-defined
automorphism of the $\U_q(\Hlie)^+$-module $L_1^+(u)\otimes_d L_1^-$.
\end{rem}

\begin{rem} It should also be possible to derive from \cite[Theorem 7.6]{HL} that a tensor product of simple representations in the category $\mathcal{O}^-$
is generically simple. Our result gives in addition a construction of corresponding braidings as well as a factorization of these braidings using algebraic stable maps.
\end{rem}

We will denote the $R$-matrix we have constructed by $I_{V,W}^\alpha(u) = \mathcal{R}_{V,W}(u)$.  
As above, we can consider the first term in the development in $u - 1$ (see also Section \ref{normsp}).
We get a non-zero morphism in the category $\mathcal{O}$ : 
$$\mathcal{R}_{V,W} : V\otimes W\rightarrow W\otimes V$$
which is not invertible in general.

\subsection{Example : braidings and $QQ^*$-systems}\label{real}

 We have seen in Example \ref{catun} that in the $sl_2$-case the Baxter's QT-relation can be categorified using a normalized
$R$-matrix. As an application of the above result, we obtain also categorified versions of the $QQ^*$-systems for general types (see section \ref{brr} 
and Equation (\ref{QQ})).

\begin{thm}\label{catqq} The specialized $R$-matrix
$$\mathcal{R}_{L_{i,a}^*,L_{i,a}^-} : L_{i,a}^*(u)\otimes L_{i,a}^-\rightarrow L_{i,a}^-\otimes L_{i,a}^*(u)$$
is non invertible and gives a non-splitted exact sequence
$$0\rightarrow  [\omega_i-\alpha_i]\bigotimes_{j, B_{j,i}\neq 0} L_{j,aq_j^{- C_{j,i}}}^- \rightarrow L_{i,a}^*\otimes L_{i,a}^- \rightarrow 
[\omega_i] \bigotimes_{j, C_{j,i}\neq 0} L_{j,aq_j^{C_{j,i}}}^- \rightarrow 0$$
which categorifies the $QQ^*$-system (\ref{QQ}).
\end{thm}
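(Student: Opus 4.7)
The plan is to apply Theorem \ref{isopre} to the pair $(L_{i,a}^*, L_{i,a}^-)$, to specialize the resulting rational braiding at $u = 1$, and then to pin down its image and kernel by combining Theorem \ref{stensor} with the $QQ^*$-relation \eqref{QQ} in $K_0(\mathcal{O}^-)$. Both $L_{i,a}^-$ and $L_{i,a}^*$ are simple objects of $\mathcal{O}^-$ (the former by definition, the latter by the analysis of \cite{HL}), so Theorem \ref{isopre} provides a rational isomorphism of $\U_q(\bo)$-modules
$$\mathcal{R}_{L_{i,a}^*, L_{i,a}^-}(u) : L_{i,a}^*(u)\otimes L_{i,a}^- \to L_{i,a}^-\otimes L_{i,a}^*(u).$$
Taking the leading term of its expansion at $u = 1$ in the sense of Section \ref{normsp} produces a non-zero morphism $\mathcal{R}_{L_{i,a}^*, L_{i,a}^-}$ of $\U_q(\bo)$-modules.

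Next, set
$$S_1 = [\omega_i]\bigotimes_{j,\, C_{j,i}\neq 0} L_{j,\, aq_j^{C_{j,i}}}^-, \qquad S_2 = [\omega_i - \alpha_i]\bigotimes_{j,\, B_{j,i}\neq 0} L_{j,\, aq_j^{-C_{j,i}}}^-.$$
Each is simple: tensor products of negative prefundamental representations at distinct spectral parameters are simple by Theorem \ref{stensor}, and twisting by a one-dimensional module preserves simplicity. The $QQ^*$-relation \eqref{QQ} then asserts $[L_{i,a}^*\otimes L_{i,a}^-] = [S_1] + [S_2]$ in $K_0(\mathcal{O}^-)$, so the tensor product has Jordan--Hölder length exactly two with composition factors $S_1$ and $S_2$. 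Using $Y_{i,aq_i} = \overline{\omega_i}\,\Psib_{i,aq_i^2}^{-1}\Psib_{i,a}$ and unwinding the highest $\ell$-weight of $L_{i,a}^*$, a direct computation shows that the highest $\ell$-weight of $L_{i,a}^*\otimes L_{i,a}^-$ coincides with that of $S_1$; hence $S_1$ is the unique simple quotient (head) of $L_{i,a}^*\otimes L_{i,a}^-$, and by the symmetric calculation $S_1$ is the unique simple subobject (socle) of $L_{i,a}^-\otimes L_{i,a}^*$.

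It follows that the source and target of $\mathcal{R}_{L_{i,a}^*, L_{i,a}^-}$ are non-isomorphic length-two extensions (with the roles of head and socle interchanged), so this morphism cannot be an isomorphism. Being non-zero, its image is a non-zero proper submodule of the target and must therefore equal the simple socle $S_1$; consequently its kernel is the simple submodule $S_2$ of the source, yielding the claimed exact sequence. For non-splitness, if $L_{i,a}^*\otimes L_{i,a}^- \simeq S_1 \oplus S_2$, then $S_2$ would appear as a quotient, contradicting that $S_1$ is the unique simple quotient. The resulting short exact sequence directly categorifies \eqref{QQ}. The main technical point is the highest-$\ell$-weight identification of the head of $L_{i,a}^*\otimes L_{i,a}^-$ with $S_1$, since this asymmetry between source and target is what forces the image and kernel to be the specific simple factors predicted by the $QQ^*$-system.
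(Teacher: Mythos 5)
Your overall strategy (length two from Theorem \ref{stensor} and the $QQ^*$-relation, then identify image and kernel of the specialized braiding) is the same as the paper's, but there is a genuine gap at the decisive step. You write that a computation of the highest $\ell$-weight of $L_{i,a}^*\otimes L_{i,a}^-$ shows it coincides with that of $S_1$, and you conclude \emph{hence} $S_1$ is the head of $L_{i,a}^*\otimes L_{i,a}^-$ and, ``by the symmetric calculation'', the socle of $L_{i,a}^-\otimes L_{i,a}^*$. This does not follow. The highest $\ell$-weight of a tensor product of highest $\ell$-weight modules is the product of the highest $\ell$-weights and is therefore \emph{the same} for both orderings $L_{i,a}^*\otimes L_{i,a}^-$ and $L_{i,a}^-\otimes L_{i,a}^*$; this invariant only tells you that $S_1=L(\Psib)$ is the composition factor attached to the top $\ell$-weight, and it cannot distinguish whether, in a given ordering, $L(\Psib)$ sits as the head or as the socle. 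What you actually need is that $L_{i,a}^*\otimes L_{i,a}^-$ is \emph{cyclic} (generated by the tensor product of highest $\ell$-weight vectors, so that its unique simple quotient is $L(\Psib)=S_1$) while $L_{i,a}^-\otimes L_{i,a}^*$ is \emph{cocyclic} (the submodule generated by that vector is simple, so that $S_1$ is the socle). These are genuinely different module-theoretic assertions, and your ``symmetric calculation'' of highest $\ell$-weights gives the same answer for both orders rather than the opposite ones.

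The paper closes exactly this gap by an explicit analysis of low-lying $\ell$-weight spaces: it introduces the $\ell$-weights $\Psib'=\Psib A_{i,a}^{-1}$, $\Psib''$, $\Psib'''$ (each of $\ell$-weight space of dimension one), uses the normalized stable maps $S^{norm}$ together with Theorem \ref{prodlweight} to exhibit the corresponding $\ell$-weight vectors as pure tensors such as $v_{i,a}'\otimes w_{i,a}$ and $w_{i,a}'\otimes v_{i,a}$, and then checks the action of $x_{i,0}^+$ on them. In one order this shows the candidate singular vector is not highest $\ell$-weight (forcing cocyclicity), and in the other it shows the module is generated by the top vector (cyclicity). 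Without an argument of this kind, your proof cannot rule out the opposite configuration, in which the image of the braiding would be $S_2$ rather than $S_1$ and the exact sequence would come out reversed. The rest of your argument (simplicity of $S_1$ and $S_2$, length two, the non-splitness deduction) is correct once the head/socle identification is supplied.
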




\begin{proof}
From the $QQ^*$-system and Theorem \ref{stensor}, the tensor product $L_{i,a}^*\otimes L_{i,a}^-$ 
is of length $2$. Hence the image of the specialized braiding 
$\mathcal{R}_{L_{i,a}^*,L_{i,a}^-}$ is simple or isomorphic to $L_{i,a}\otimes L_{i,a}^*$. 

Let $\Psib$ be the highest $\ell$-weight of $L_{i,a}^*\otimes L_{i,a}^-$. We will also discuss the following $\ell$-weights : 
$$\Psib' = \Psib A_{i,a}^{-1}\text{ , }\Psib'' = \Psib A_{i,a q_i^2}^{-1}\text{ , }\Psib''' = \Psib' A_{i,a}^{-1},$$
where the $A_{i,a}$ are defined as in Section \ref{qchar}. By the analysis in \cite[Section 6.1.3, 7.2]{HL}, these are $\ell$-weights of $L_{i,a}^*\otimes L_{i,a}^-$ of corresponding $\ell$-weight spaces of dimension $1$. 
The two simple constituents of the tensor product are $L(\Psib)$ and $L(\Psib')$. The $\ell$-weights $\Psib''$ and $\Psib'''$ are $\ell$-weights of $L(\Psib)$ only.

Consider the representation $L_{i,a}^-\otimes L_{i,a}^*$. Let $w_{i,a}$ be an highest weight vector of $L_{i,a}^*$ and 
$v_{i,a}'$ a weight vector of $L_{i,a}^-$ of weight $-\alpha_i$. From Theorem \ref{prodlweight}, we get
$$S^{norm}_{L_{i,a}^-, L_{i,a}^*} (v_{i,a}'\otimes w_{i,a}) = v_{i,a}'\otimes w_{i,a}$$
which generates the $\ell$-weight space associated to $\Psib'$. But 
$$x_{i,0}^+.(v_{i,a}'\otimes w_{i,a}) = (x_{i,0}^+.v_{i,a}')\otimes w_{i,a} \neq 0.$$
Hence, such a vector is not of highest $\ell$-weight. This implies that $L_{i,a}^-\otimes L_{i,a}^*$ is cocyclic, that is the submodule generated by a tensor product
of highest weight vectors is simple.

Consider now the representation $L_{i,a}^*\otimes L_{i,a}^-$. Let $v_{i,a}$ be a highest weight vector of $L_{i,a}^-$ and 
$w_{i,a}'$ a weight vector of $L_{i,a}^*$ of weight $-\alpha_i$. As above, 
$$S^{norm}_{L_{i,a}^*, L_{i,a}^-} (w_{i,a}'\otimes v_{i,a}) = w_{i,a}'\otimes v_{i,a},$$
$$S^{norm}_{L_{i,a}^*, L_{i,a}^-} (v_{i,a}'\otimes v_{i,a}') = w_{i,a}'\otimes v_{i,a}',$$
which generate the $\ell$-weight spaces associated respectively to $\Psib''$ and $\Psib'''$ 
(note however that it would be more complicated for the $\ell$-weight associated to $\Psib''' A_{i,aq_i^{-2}}^{-2}$).
But $x_{i,0}^+ (w_{i,a}'\otimes v_{i,a}') \notin \mathbb{C} . w_{i,a}'\otimes v_{i,a}$. Hence $L_{i,a}^*\otimes L_{i,a}^-$ is not cocyclic, 
but cyclic, that is generated by a tensor product of highest weight vectors. 

We can conclude : the two representations are not isomorphic, the image of $\mathcal{R}_{L_{i,a}^*,L_{i,a}^-}$ is simple isomorphic to $L(\Psib)$.
\end{proof}

\section{Further directions}\label{fd}

In this section we discuss various possible further developments of the results in this paper.

\medskip

{\bf Polynomiality.} A polynomiality property of the action of Cartan-Drinfeld elements was established in \cite[Theorem 5.17]{FH} : the action of a certain
family of Cartan-Drinfeld current $T_i(z)$, which characterize the action of the Cartan-Drinfeld algebra $\U_q(\Hlie)^+$, act polynomially
on any tensor product $W$ of simple-finite dimensional modules. The relation to the polynomiality of the algebraic stable maps $S_{L_{i,1}^+,W}(u)$, $S_{W,L_{i,1}^+}(u)$ observed in the $sl_2$-case 
(section \ref{exsldeux}) has to be understood.

\medskip

{\bf Baxter algebra and geometry.} One of the main application of the theory of Maulik-Okounkov is the relation to the action of the Baxter subalgebra and to its eigenvectors \cite{mo}. 
The Baxter subalgebra is generated by coefficients of transfer-matrices and can be seen as a deformation of the Cartan-Drinfeld subalgebra $\U_q(\Hlie)^+$. A natural question is to study in this context the relation between $\ell$-weight vectors and eigenvectors of the Baxter algebra.
More generally, a geometric framework for the result of the present paper has to be developed, as for example the results 
obtained in \cite{psz} for the prefundamental representations in type $A$. 
We hope our results give additional practical tools to handle the corresponding geometric structures. 
The case of non symmetric cases is open as well.

\medskip

{\bf Fusion product.} A fusion product $*$ was defined in \cite{htg} for finite-dimensional modules of highest $\ell$-weight from a specialization of the Drinfeld coproduct. It would be interesting 
to understand how algebraic stable maps behave relatively to this
structure, for instance to determine if $S_{V,W} : V * W\rightarrow V\otimes W$ defines a morphism. 

\medskip

{\bf Generalized Schur-Weyl dualities.} Kang-Kashiwara-Kim defined in \cite{kkk} generalized Schur-Weyl dualities 
as functors from categories of representations of quiver Hecke-algebras (Khovanov-Lauda-Rouquier algebras) to 
categories of finite-dimensional representations of quantum affine algebras, generalizing previous results of Chari-Pressley \cite{Chasw} obtained in type $A$. This leads to very interesting equivalences of categories. 
The construction of the generalized Schur-Weyl functors is based on certain bimodules obtained from the braidings 
in the category $\mathcal{C}$ of finite-dimensional representations. The braidings constructed in this
paper for the category $\mathcal{O}^-$ (Theorem \ref{isopre}) should lead to an extension of the 
construction of \cite{kkk} and to possible equivalences between subcategories of the category $\mathcal{C}$ and of the category $\mathcal{O}^-$, explaining seemly analogous structures.

\medskip

{\bf Tensor products and basis of $\ell$-weight vectors.} Using the framework of the present paper, one can define algebraic stable maps $S_{V_1,V_2,\cdots, V_N}$ on tensor products of more than $2$ factors $V_1\otimes V_2\otimes \cdots \otimes V_N$  as well as the corresponding deformations 
$$S_{V_1,V_2,\cdots, V_N}(u_1,\cdots, u_N).$$ 
For $i < j$, we have the algebraic stable map $S_{V_i,V_j}(u_i,u_j)$. After tensoring with identity maps, it gives an operator 
$$S_{V_1,V_2,\cdots, V_N}^{(i,j)}(u_i,u_j).$$ 
We conjecture that the composition of such operators is equal to $S_{V_1,V_2,\cdots, V_N}(u_1,\cdots, u_N)$. 
\\Besides, for a family of simple modules $V_1,\cdots, V_N$ endowed with a basis of $\ell$-weight vectors, the algebraic stable map $S_{V_1,V_2,\cdots, V_N}$ gives such a basis of $\ell$-weight vectors of the tensor product $V_1\otimes \cdots \otimes V_N$. For example, one may consider a family of thin (that is with one dimensional $\ell$-weight subspaces) fundamental modules. We get a 
natural basis of $\ell$-weight vectors in the corresponding standard module, that is the tensor product of the fundamental modules. In types $A, B, C, G_2$, 
all fundamental modules are thin, and so we get a basis of all standard modules. More generally, an arbitrary simple module is a subquotient of such a standard module (except in type $E_8$ by \cite[Proposition 7.3]{FH}). We intend to study if such bases of standard modules descend to simple modules and how such bases behave relatively to tensor products.

\end{document}